\newcommand \provXc[3]{[#1:#2]_{#3}} 
\def\leqLam{\leq_\Lambda}
\newcommand \consXc[3]{\langle #1:#2\rangle_{#3}} 
\newcommand\leLam{<_\Lambda}
\newcommand{\rca}{\ensuremath{{\mathrm{RCA}}_0}\xspace}
\def\sentences{{\mathcal S}^1_\omega}
\newtheorem{theorem}{Theorem}[section]
\newtheorem{definition}[theorem]{Definition}
\newtheorem{lemma}[theorem]{Lemma}
\newtheorem{corollary}[theorem]{Corollary}
\newtheorem{proposition}[theorem]{Proposition}
\newtheorem{observation}[theorem]{Observation}
\newcommand{\logic}[1]{{\ensuremath{\mathbf{#1}}}\xspace}
\newcommand{\provx}[2]{[{#1}]_{#2}}
\newcommand{\consx}[2]{\langle{#1}\rangle_{#2}}
\newcommand{\glp}{{\ensuremath{\mathsf{GLP}}}\xspace}
\newcommand{\pa}{\ensuremath{{\mathrm{PA}}}\xspace}
\newcommand{\zfc}{\ensuremath{{\mathrm{ZFC}}}\xspace}
\newcommand{\eca}{\ensuremath{{{\rm ECA}_0}}\xspace}
\newcommand{\AcaNaught}{\ensuremath{{{\rm ACA}_0}}\xspace}
\newcommand{\gl}{\logic{GL}}
\newcommand{\ea}{\ensuremath{{\mathrm{EA}}}\xspace}
\newcommand{\base}{\ensuremath{{\mathrm{ACA}}_0}\xspace}
\newcommand{\la}{\langle}
\newcommand{\ra}{\rangle}
\def\fmodels{\xymatrix{
\ar@{|=}[r]^{<\omega}&
}
}
\def\nmodels{\xymatrix{
\ar@{|=}[r]^{N}&
}
}
\def\<{\left <}
\def\nc{{\Box}}
\def\mlang{{\mathcal L}_\nc}
\def\alang{{\sf L}_{2}}
\def\>{\right >}
\DeclareSymbolFont{AMSb}{U}{msb}{m}{n}
\DeclareMathSymbol{\N}{\mathbin}{AMSb}{"4E}
\DeclareMathSymbol{\Z}{\mathbin}{AMSb}{"5A}
\DeclareMathSymbol{\R}{\mathbin}{AMSb}{"52}
\DeclareMathSymbol{\Q}{\mathbin}{AMSb}{"51}
\DeclareMathSymbol{\I}{\mathbin}{AMSb}{"49}
\DeclareMathSymbol{\C}{\mathbin}{AMSb}{"43}
\newcommand{\bt}{\begin{theorem}}
\newcommand{\et}{\end{theorem}}
\newcommand{\bl}{\begin{lemma}}
\newcommand{\el}{\end{lemma}}
\newcommand\remove[1]{}
\newcommand*{\email}[1]{%
    \normalsize\href{mailto:#1}{#1}\par
   }
\def\alang{{{\bm \Pi}^1_\omega}}
\newcommand\GuardLang[1]{\alang\upharpoonright #1}
\newcommand{\boxBox}[1]{[#1]^{\Box}}
\newcommand{\boxDiamond}[1]{\la #1 \ra^{\Box}}
\newcommand{\nboxBox}[1]{[#1]}
\newcommand{\nboxDiamond}[1]{\la#1\ra}
\newcommand{\vboxBox}[1]{[#1]^{\boxtimes}}
\newcommand{\vboxDiamond}[1]{\la #1 \ra^{\boxtimes}}
\newcommand{\bBox}[1]{[#1]}
\newcommand{\bDiamond}[1]{\la #1 \ra}
\begin{document}

\title{M\"unchhausen provability}


\author{Joost J. Joosten}
\affil{University of Barcelona\\
\email{jjoosten@ub.edu}}

\maketitle

\begin{abstract}
By Solovay's celebrated completeness result \cite{Solovay:1976} on formal provability we know that the provability logic \gl describes exactly all provable structural properties for any sound and strong enough arithmetical theory with a decidable axiomatisation. Japaridze generalised this result in \cite{Japaridze:1988} by considering a polymodal version \glp of \gl with modalities $[n]$ for each natural number $n$ referring to ever increasing notions of provability. 

Modern treatments of \glp tend to interpret the $[n]$ provability notion as ``provable in a base theory $T$ together with all true $\Pi^0_n$ formulas as oracles''. In this paper we generalise this interpretation into the transfinite. In order to do so, a main difficulty to overcome is to generalise the syntactical characterisations of the oracle formulas of complexity $\Pi^0_n$ to the hyper-arithmetical hierarchy. The paper exploits the fact that provability is $\Sigma^0_1$ complete and that similar results hold for stronger provability notions. As such, the oracle sentences to define provability at level $\alpha$ will recursively be taken to be consistency statements at lower levels: provability through provability whence the name of the paper. 

The paper proves soundness and completeness for the proposed interpretation for a wide class of theories; namely for any theory that can formalise the recursion described above and that has some further very natural properties. Some remarks are provided on how the recursion can be formalised into second order arithmetic and on lowering the proof-theoretical strength of these systems of second order arithmetic.
\end{abstract}

\section{Introduction}
As mentioned in the abstract, by Solovay's celebrated completeness result \cite{Solovay:1976} on provability we know that the provability logic \gl describes exactly all provable structural properties for any sound and strong enough arithmetical theory with a decidable axiomatisation. Japaridze generalised this result in \cite{Japaridze:1988} by considering a polymodal version \glp of \gl with modalities $[n]$ for each natural number $n$ referring to ever increasing notions of provability. 

Japaridze considered an arithmetical interpretation of the logic \glp where the $[n]$ referred to a natural formalisation of ``provable over the base theory $T$ using at most $n$ nested applications of the $\omega$-rule''. Beklemishev introduced in \cite{Beklemishev:2005:VeblenInGLP} the logics $\glp_\Lambda$ that are like \glp only that they now include a sequence of provability predicates $[\alpha]$ of ever increasing strength for each ordinal $\alpha$ below some fixed ordinal $\Lambda$. In \cite{FernandezJoosten:2018:OmegaRuleInterpretationGLP} the authors generalised Japaridze's result into the transfinite by providing an interpretation of $\glp_\Lambda$ for recursive $\Lambda$ into second order arithmetic by allowing for $[\alpha]$ at most $\alpha$ nestings of the omega rule, thereby providing a first arithmetical interpretation of $\glp_\Lambda$ for $\Lambda >\omega$. In a recent paper (\cite{BeklemishevPakhomov:2019:GLPforTheoriesOfTruth}) Beklemishev and Pakhomov provide an alternative interpretation in first order arithmetic enriched with a collection of ever more expressive truth predicates indexed by the ordinals.  

Modern treatments of $\glp_\omega$ tend to interpret the $[n]$ provability notion as ``provable in a base theory $T$ together with all true $\Pi^0_n$ formulas''. Let us call this the \emph{truth-interpretation} here. The main reason for the popularity of the truth-interpretation is that the resulting provability hierarchies run in phase with the arithmetical hierarchy and they imply good preservation properties between different consistency statements giving rise to the so-called \emph{reduction property}. In particular, due to these good properties Beklemishev was able to set $\glp_\omega$ to work to perform proof-theoretical analyses of Peano Arithmetic and its kin (\cite{Beklemishev:2003:ProofTheoreticAnalysisByIteratedReflection, Beklemishev:2004:ProvabilityAlgebrasAndOrdinals, Beklemishev:2005:Survey}). Below we shall give more circumstantial evidence to why the truth interpretation is optimal.

As mentioned, the first arithmetical interpretation of transfinite polymodal provability logic (\cite{FernandezJoosten:2018:OmegaRuleInterpretationGLP}) was, like Japaridze's original approach, based on iterating applications of the omega rule. Although it was observed in \cite{Joosten:2013:AnalysisBeyondFO} that soundness of the interpretation is sufficient for the purpose of an ordinal analysis, the paper also contained a completeness proof in such general lines that it can be applied to a wide range of interpretations. 

It seemed however, that the omega-rule interpretation does not have all the desirable properties to make it directly a useful tool for ordinal analyses. Even though various known fragments of second order arithmetic like $\mathrm{ATR}_0$, $\Pi^1_1-\mathrm{CA}_0$ and $\Pi^1_1-\mathrm{CA}_0 + \mbox{Bar Induction}$ can be characterised (\cite{CordonFernandezJoostenLara:2017:PredicativityThroughTransfiniteReflection, Fernandez:2015:ImpredicativeReflection}) in terms of reflection principles using versions of the omega rule interpretation of $GLP_\Lambda$, the fine-structure between various consistency statements could not be proven. 

One possible reason may be that the omega provability predicates do not tie up with the arithmetical hierarchy and Turing jumps as observed in \cite[Lemma 9]{Joosten:2015:TuringJumpsThroughProvability}. A more concrete and serious objection is given in an unpublished simple observation from Fern\'andez Duque: using only one application of the omega-rule one can prove any induction axiom so that the one-consistency of primitive recursive arithmetic in the omega-rule sense suffices to prove the consistency of Peano arithmetic.

In short, the truth interpretation of $\glp_\omega$ has better properties than the omega-rule interpretation. However, one advantage of the omega-rule interpretation is its amenability to transfinite generalisations. The formalisation of the truth interpretation relies on a syntactical characterisation of the arithmetical hierarchy in terms of the $\Sigma^0_n$ formulas. It remained unclear how to generalise this in a canonical way to the hyperarithmetical setting or beyond without extending the language in a way that often seems rather ad-hoc. 

The idea of this paper to overcome this is very simple yet turns out to be rather powerful. The \emph{Friedman-Goldfarb-Harrington} theorem (FGH) tells us that for a wide range of theories, in a sense, the canonical consistency predicate is $\Pi^0_1$ complete. Thus, instead of using a true $\Pi^0_1$ sentence as oracle for the $[1]_T$ provability predicate in the truth interpretation, one can use a provably equivalent consistency statement. 

Via a generalisation of the FGH theorem proven in \cite{Joosten:2015:TuringJumpsThroughProvability, Joosten:2019:TransfiniteTuringjumps} one can see that the consistency notion corresponding to $[1]$ provability is in a sense $\Pi^0_2$ complete and so on. Thus, it makes sense to consider the following recursion as in \cite{Joosten:2015:TuringJumpsThroughProvability}: provability at level $n$ means provable from an oracle which is a consistency statement of level $m$ for some $m<n$. It feels like lifting oneself up from the swamp by pulling ones hairs as the \emph{Baron von M\"unchhausen} did. Moreover, the recursion lends itself to an easy transfinite generalisation and that is exactly what this paper does. Before we close the introduction with an overview of how the current paper does so, we would like to point out how this paper fits in the landscape of related literature thereby trying to provide an ample justification for it.

Ordinal analysis via polymodal provability logics seems to have various benefits over other methods of ordinal analysis. An important benefit is it allows to tell different incomplete theories apart at the lowest possible level of $\Pi^0_1$ sentences. It is good to recall that the classical $\Pi^1_1$ proof theoretical ordinal will not even discern theories at the level of $\Sigma^1_1$-level. Another benefit may seem the modularity of ordinal analysis: the ordinal analysis of different theories will all share the same template and re-use various tools and theorems. 

We see another stronghold in the fact that the approach relates various different fields in a natural way. In particular, the closed formulas of \glp --called \emph{worms}-- are important in this. Worms can be used to denote various notions central to foundational issues. For one, they are simple and well-behaved elements from a well-behaved logic. Even though the logic \glp is known to be PSPACE-complete (\cite{Shapirovsky:2008:PSPACEcompletenessOfGLP}) it is Kripke incomplete. However, natural topological semantics do exist (\cite{Icard:2009:TopologyGLP, Ignatiev:1993:StrongProvabilityPredicates, BeklemishevGabelaia:2011:TopologicalCompletenessGLP, Fernandez:2012:TopologicalCompleteness, AguileraFernandez:2017:strongCompleteness}) even though it is known to depend on strong cardinal assumptions for various natural topological spaces \cite{BagariaMagidorSakai:2013}. 

Moreover, the closed fragment of $\glp_\Lambda$ is very well behaved, well studied and in particular does allow for natural relational semantics \cite{Ignatiev:1993:StrongProvabilityPredicates, FernandezJoosten:2012:KripkeSemanticsGLP0, FernandezJoosten:2013:ModelsOfGLP}. In addition, and this provides a second interpretation of worms, the worms are known to define a well-ordered relation as studied in \cite{Beklemishev:2005:VeblenInGLP, BeklemishevFernandezJoosten:2014:LinearlyOrderedGLP, FernandezJoosten:2014:WellOrders} and thus can provide for ordinal notation systems (\cite{Beklemishev:2005:VeblenInGLP, Fernandez:2017:Spiders, HermoFernandez:2019:BracketCalculus}).

Some simple worms are just consistency statements which are known to be related to reflection principles so that by classical results they are related to fragments of arithmetic \cite{KreiselLevy:1968:ReflectionPrinciplesAndTheirUse}. Thus, worms --apart from being privileged elements of a decidable logic-- can denote both ordinals and fragments of arithmetic. A possibly more important use however lies in their relation to Turing progressions: each Turing progression below $\varepsilon_0$ can be approximated by the arithmetical interpretation of a $\glp_\omega$ worm. The relation goes even that far so that points in a universal modal model for the closed fragment of $\glp_\omega$ can be seen as arithmetical theories axiomatised by Turing progressions (\cite{Joosten:2016:TuringTaylorExpansion}) so that the model displays all conservation results between the different theories. It is these four different possible denotations for worms that make them so versatile and make new interpretations of $\glp_\Lambda$ as the current paper so promising.

\paragraph{\bf Plan of the paper}
Section \ref{section:Prelims} provides some useful lemmata and settles on notation which otherwise is quite standard so that it can be skipped by the initiate readers only to come back to it when needed.  Then, in Section \ref{section:TuringJumpProvability} the central provability notion of this paper is introduced: one-M\"unchhausen provability. The usage of the word ``one" in there refers to the fact that provability at level $\alpha$ is allowed to use a single oracle sentence of a lower level consistency statement.

Section \ref{section:OnUniqueness} mainly dwells on the fact that in general we can not prove that different M\"unchhausen provability predicates are provably equivalent even if they are so on the low levels. It is observed that we do have uniqueness in case the object theory and the meta theory are provably the same.  

Section \ref{section:munchhausenSound} then proceeds to prove soundness for one-M\"unchhausen provability for a large class of theories and Section \ref{section:completeness} proves arithmetical completeness. In Section \ref{section:formalisation} it is sketched how one-M\"unchhausen provability can be formalised in second order arithmetic. The formalisation requires a substantial amount of transfinte induction both in the object and meta theory so that applications to ordinal analysis will become difficult. Finally, in Section \ref{section:MunchhausenProvabilityMultipleOracle} some first steps are taken on how to weaken the needed strength of the object and meta theory. By allowing for multiple oracles sentences instead of just one, soundness can be proven without any transfinite induction.

\section{Preliminaries}\label{section:Prelims}

In this section we dwell succinctly on the necessary notions from both formal arithmetic and modal provability logics. Apart from proving a few new observations, we mainly settle on notation and refer to the literature for details.

\subsection{Arithmetic}
This paper deals with interpretations of transfinite provability logic. Even though the set-up of such interpretations starts schematically so that our analysis applies to a wide range of theories, we will in particular have second order arithmetic in mind. We refer the reader to standard references for details (\cite{Simpson:2009:SubsystemsOfSecondOrderArithmetic, HajekPudlak:1993:Metamathematics, Beklemishev:2005:Survey}) and only include some minimal comments for expository purposes.

For first-order arithmetic, we shall work with theories with identity in the language $\{  0,1, \exp, +, \cdot, < \}$ of arithmetic where $\exp$ denotes the unary function $x\mapsto 2^x$. We define $\Delta^0_0=\Sigma^0_0=\Pi^0_0$ formulas (also referred to as \emph{elementary formulas}) as those where all quantifiers occur bounded, that is we only allow quantifiers of the form $\forall\,  x{<}t$ or $\exists\, x{<}t$ where $t$ is some term not containing $x$. We inductively define $\Pi^0_{n+1}$/$\Sigma^0_{n+1}$ formulas as allowing a block of universal/existential quantifiers up-front a $\Sigma^0_n$/$\Pi^0_n$ formula. The union of these classes is called the \emph{arithmetical formulas} and denoted by $\Pi^0_\omega$. 

If $P$ is a predicate, the classes relativized to $P$ are defined the same with the sole difference that we consider the predicate $P$ as an atomic formula. We flag relativisation by including the predicate in brackets after the class like, for example, in $\Pi^0_1(P)$.  

\emph{Peano Arithmetic} (\pa) contains the basic axioms describing the non-logical symbols together with induction formulas $I_\varphi$ for any formula $\varphi$ where as always $I_\varphi: = \varphi(0) \wedge \forall x(\varphi (x) \to \varphi (x+1)) \to \forall x \varphi (x)$. When $\Gamma$ is a complexity class, by $\mathrm{I}\Gamma$ we denote the theory which is like \pa  except that induction is restricted to formulas in $\Gamma$. The theory $\mathrm{I}\Delta^0_0$ is also referred to as \emph{elementary arithmetic\footnote{In the literature it is more common to work with a formulation of \ea in the language without exponentiation. For the purpose of this paper, the differences are not essential.}} or \emph{Kalmar elementary arithmetic} (\ea).

In this paper we also mention collection axioms $\mathrm{B}_\varphi$ which basically state that the range of a function with finite domain is finite: $B_\varphi: = \forall z{<}y\, \exists x \varphi (z,x) \to \exists u\, \forall z{<}y\, \exists\, x{<}u \varphi (z,x)$. Again, for a formula class $\Gamma$, by $\mathrm{B}\Gamma$ we denote the set of collection axioms for formulas from $\Gamma$.

Second order arithmetic is an extension of first order arithmetic where we now add second order set variables together with a binary symbol $\in$ for membership. Instead of extending identity to second order terms we stipulate that second order identity is governed by extensionality: $X = Y :\Leftrightarrow \forall x\ (x\in X \leftrightarrow x\in Y)$. The formula classes $\Sigma^1_n$ and $\Pi^1_n$ are defined as their first-order counterpart only that we now count second order quantification alternations. Likewise, by $\Pi^1_\omega$ we denote the class of all second order formulas.

The strength of various fragments of second order arithmetics is in large determined by their set existence axioms. The \emph{collection axiom} for $\varphi$ tells us that $\varphi$ defines a set: $\exists X \forall x (x\in X \leftrightarrow \varphi)$. The second order system $\AcaNaught$ contains the defining axioms for the first-order non-logical symbols together with set-induction $0\in X \wedge \forall x (x{\in} X \to x{+}1 {\in} X) \to \forall x \ x{\in} X$ and collection for all arithmetical formulas. 

The theory $\AcaNaught$ is conservative over $\pa$ for first-order formulas. In \cite{FernandezJoosten:2018:OmegaRuleInterpretationGLP} the system \eca is introduced as $\AcaNaught$ except that comprehension is restricted to $\Delta^0_0$ formulas. In \cite[Lemma 3.2]{CordonFernandezJoostenLara:2017:PredicativityThroughTransfiniteReflection} it is proven that \eca is conservative over \ea for first-order formulas.

We will tacitly assume that when we are given a theory $T$, we are actually given a decidable formula $\tau$ that binumerates the axioms of $T$. That is to say, $\chi$ is an axiom of $T$ if and only if\footnote{We shall refrain from making a difference between syntactical objects and their G\"odel numbers when the context allows us so.} $T\vdash \tau(\chi)$. For each theory $T$ we denote by $\Box_T$ the unary $\Sigma^0_1$-predicate that defines provability in $T$. That is, $\mathbb N \models \varphi$ if and only if $\varphi$ is provable in $T$. When we write $\Box_T \varphi(\dot x)$ we denote the formula with free variable $x$ that expresses that for each number x, the formula $\varphi (\overline n)$ is provable in $T$. Here, $\overline n$ denotes the numeral of $n$ which is a syntactical expression denoting $n$, for example defined as $\overline 0 = 0; \overline {x+1} =\overline x + 1$.

The Friedman-Goldfarb-Harrington Theorem (FGH for short) states that for any computably enumerable theory $U$, the corresponding formalised provability predicate is provably $\Sigma^0_1$-complete provided $U$ is consistent. Since the theorem provides an important tool in this paper, let us give a precise formulation.

\begin{theorem}[Friedman-Goldfarb-Harrington]\label{theorem:FGH}
Let $U$ be a computably enumerable theory with corresponding provability predicate $\Box_U$. We have that for any $\Sigma^0_1$ formula $\sigma(x)$, there is a $\Sigma^0_1$ formula $\rho(x)$ so that 
\[
\ea \vdash \Diamond_U\top \to \forall x\ \big( \sigma (x) \leftrightarrow \Box_U \rho (\dot x)\big).
\]
\end{theorem}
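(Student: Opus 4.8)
The plan is to build $\rho(x)$ by a Gödel–Carnap fixed point so that, under the assumption $\Diamond_U\top$, the predicate $\Box_U\rho(\dot x)$ collapses onto $\sigma(x)$. First I would fix a $\Sigma^0_1$ truth (partial satisfaction) predicate and recall the standard fact, provable in \ea, that $\Box_U$ satisfies provable $\Sigma^0_1$-completeness: for any $\Sigma^0_1$ formula $\theta(x)$, $\ea\vdash\forall x\,(\theta(x)\to\Box_U\theta(\dot x))$. The forward direction $\sigma(x)\to\Box_U\rho(\dot x)$ of the theorem will then come essentially for free once $\rho$ is chosen to be $\Sigma^0_1$ and provably implied by $\sigma$; the real content is the converse $\Box_U\rho(\dot x)\to\sigma(x)$, and this is where the consistency hypothesis $\Diamond_U\top$ and the fixed point are needed.

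Concretely, I would apply the diagonal lemma (formalised in \ea, with a free number variable $x$ carried along) to obtain a $\Sigma^0_1$ formula $\rho(x)$ satisfying
\[
\ea\vdash \forall x\,\big(\rho(x)\leftrightarrow (\sigma(x)\vee \Box_U\rho(\dot x))\big).
\]
Note $\rho$ is genuinely $\Sigma^0_1$: it is a disjunction of the $\Sigma^0_1$ formula $\sigma(x)$ with the $\Sigma^0_1$ formula $\Box_U\rho(\dot x)$, and the diagonalisation can be arranged to preserve this complexity (e.g.\ by diagonalising on a $\Sigma^0_1$ template). From this equivalence, reasoning inside \ea, one direction is immediate: $\sigma(x)\to\rho(x)$, hence by provable $\Sigma^0_1$-completeness $\sigma(x)\to\Box_U\rho(\dot x)$. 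For the other direction, argue in \ea under the hypothesis $\Diamond_U\top$ and assume $\Box_U\rho(\dot x)$ for a given $x$. By the fixed-point equivalence, $U$ proves $\rho(\overline x)\leftrightarrow(\sigma(\overline x)\vee\Box_U\rho(\overline x))$; also, since $\Box_U\rho(\dot x)$ holds, $U\vdash\Box_U\rho(\overline x)$ by provable $\Sigma^0_1$-completeness of $\Box_U$ applied to the $\Sigma^0_1$ sentence $\Box_U\rho(\overline x)$ — wait, more carefully: from $\Box_U\rho(\dot x)$ we get (provably) $\Box_U\Box_U\rho(\dot x)$, hence $\Box_U\rho(\dot x)$ is "known" inside $U$. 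Combining, $U\vdash\rho(\overline x)$, so $\Box_U\rho(\dot x)$ yields nothing new yet; the point is that if $\neg\sigma(x)$ held, then externally $\neg\sigma(\overline x)$ is $\Pi^0_1$... This is the delicate step.

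The cleaner route, which I would actually carry out, is the Gödel-style one: choose $\rho(x)$ by diagonalisation so that $\ea\vdash\forall x\,(\rho(x)\leftrightarrow(\sigma(x)\vee\Box_U\neg\rho(\dot x)))$ — i.e.\ $\rho$ asserts "$\sigma(x)$, or my own negation is provable." Then inside \ea: if $\sigma(x)$, then $\rho(x)$, so $\Box_U\rho(\dot x)$ by $\Sigma^0_1$-completeness. Conversely, suppose $\Box_U\rho(\dot x)$. If also $\Box_U\neg\rho(\dot x)$ then $\Box_U\bot$, contradicting $\Diamond_U\top$; so $\neg\Box_U\neg\rho(\dot x)$, whence by the fixed-point equivalence (which $U$, and hence also \ea about $U$, proves) together with $\Box_U\rho(\dot x)$ we extract $\sigma(x)$: indeed $U\vdash \rho(\overline x)\to(\sigma(\overline x)\vee\Box_U\neg\rho(\overline x))$ and $U\vdash\rho(\overline x)$, so $U\vdash\sigma(\overline x)\vee\Box_U\neg\rho(\overline x)$; formalising that $\neg\Box_U\neg\rho(\dot x)$ and that $\sigma$ is $\Sigma^0_1$ (so $U$-provability of $\sigma(\overline x)\vee(\text{refuted }\Sigma^0_1\text{ stmt})$ forces $\sigma(x)$ via reflection for the $\Sigma^0_1$ sentence $\sigma(\overline x)$), one concludes $\sigma(x)$. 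The main obstacle — and the step deserving the most care — is precisely this last formalisation: moving from "$U$ proves $\sigma(\overline x)\vee\Box_U\neg\rho(\overline x)$" and "$U$ does not prove $\Box_U\neg\rho(\overline x)$" to "$\sigma(x)$ is true," which requires $\Sigma^0_1$-reflection-type reasoning available in \ea, and checking that all Löb-style manipulations of $\Box_U$ (in particular provable $\Sigma^0_1$-completeness and the internal deduction theorem) go through in \ea rather than a stronger base theory. The $\Sigma^0_1$-ness of $\rho$ must also be tracked through the diagonalisation throughout.
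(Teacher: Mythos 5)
The paper itself does not prove this theorem (it cites Visser and the FGH literature), so let me measure your proposal against the standard proof. There is a genuine gap at exactly the step you flag, and it is not repairable within your chosen fixed point. From $U\vdash\sigma(\overline x)\vee\Box_U\neg\rho(\overline x)$ together with $\neg\Box_U\neg\rho(\dot x)$ you cannot extract $\sigma(x)$: that inference is an instance of $\Sigma^0_1$-reflection for $U$, which is equivalent to the $1$-consistency of $U$ and does not follow from the mere consistency assumption $\Diamond_U\top$, let alone provably in \ea. In fact your fixed point $\rho(x)\leftrightarrow\big(\sigma(x)\vee\Box_U\neg\rho(\dot x)\big)$ is refuted outright: take $U=\ea+\Box_{\ea}\bot$, which is consistent, and let $\sigma$ be the false $\Sigma^0_1$ sentence $\Box_{\ea}\bot$. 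Then $U\vdash\sigma$, hence $U\vdash\rho$, hence $\mathbb N\models\Diamond_U\top\wedge\Box_U\rho\wedge\neg\sigma$, so the right-to-left implication fails in the standard model and is therefore not provable in the sound theory \ea. Your first attempt $\rho(x)\leftrightarrow\big(\sigma(x)\vee\Box_U\rho(\dot x)\big)$ fails for the same reason.

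The missing idea is Rosser-style \emph{witness comparison}. For $\Sigma^0_1$ formulas $A=\exists u\,A_0(u)$ and $B=\exists v\,B_0(v)$ write $A\le B$ for $\exists u\,\big(A_0(u)\wedge\forall v{<}u\,\neg B_0(v)\big)$ and $B<A$ for its dual; both are again $\Sigma^0_1$. One takes the fixed point $\rho(x)\leftrightarrow\big(\sigma(x)\le\Box_U\rho(\dot x)\big)$. If $\sigma(x)$ holds, then either its least witness precedes every proof of $\rho(\overline x)$, so $\rho(x)$ holds and $\Box_U\rho(\dot x)$ follows by provable $\Sigma^0_1$-completeness, or some proof of $\rho(\overline x)$ appears first, giving $\Box_U\rho(\dot x)$ directly. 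Conversely, if $\Box_U\rho(\dot x)$ and $\neg\sigma(x)$, then $\Box_U\rho(\dot x)<\sigma(x)$ is a \emph{true $\Sigma^0_1$} statement which provably implies $\neg\rho(\overline x)$; pushing it under the box by provable $\Sigma^0_1$-completeness yields $\Box_U\neg\rho(\dot x)$, hence $\Box_U\bot$, contradicting $\Diamond_U\top$. This is how the argument avoids any appeal to reflection: the falsity of $\rho(x)$ is itself certified by a $\Sigma^0_1$ fact that can be moved under the box. The remaining ingredients of your outline --- provable $\Sigma^0_1$-completeness in \ea, tracking the $\Sigma^0_1$ complexity of $\rho$ through the diagonalisation, and carrying the free variable $x$ --- are correct and are exactly what is needed around this core.
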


The theorem was given its name in \cite{Visser:2005:FaithAndFalsity} in acknowledgment to the intellectual parents. Generalisations to other arithmetical provability predicates were studied in \cite{Joosten:2015:TuringJumpsThroughProvability} and \cite{Joosten:2019:TransfiniteTuringjumps}. In particular, the quantification over $\Sigma^0_1$ formulas (without exponentiation however) can be made internal in \ea and the $\rho$ is obtained from $\sigma$ by means of an elementary function.

\subsection{Transfinite provability logic}

Even though via the FGH theorem the provability predicate $\Box_T$ is in a sense $\Sigma_1$ complete for a wide variety of theories, the provable structural behaviour of the predicate can be described with well-behaved PSPACE decidable propositional modal logics. 

The simplest modal logics have one unary modal operator $\Box$ which syntactically behaves like negation. The dual modality $\Diamond$ can be seen as an abbreviation of $\neg \Box \neg$. The basic logic \logic K is axiomatised by all propositional tautologies (in the signature with $\Box$) and all so-called distribution axioms $\Box (A \to B) \to (\Box A \to \Box B)$. The rules of \logic K are modus ponens and Necessitation: from $A$ conclude $\Box A$. 

The logic \logic{K4} arises by adding the transitivity axioms to \logic K: $\Box A \to \Box \Box A$. G\"odel L\"ob's logic \logic{GL} arises to adding L\"ob's axiom scheme to \logic K: $\Box (\Box A \to A) \to \Box A$. It is known that \logic{GL} is a proper extension of \logic{K4} and that it exactly describes the provable structural properties of the provability predicate for a wide range of theories.

In this paper we are interested in provability logics of a collection of provability predicates $[\alpha]$ of increasing strength indexed by ordinals $\alpha$. For the finite ordinals, this logic was discovered by Japaridze in \cite{Japaridze:1988}. We now present this logic, which would be  $\glp_\omega$ in our notation as given in the following definition.

\begin{definition} 
For $\Lambda$ an ordinal or the class of all ordinals, the logic $\mathsf{GLP}_\Lambda$ is given by the following axioms:
\begin{enumerate}
\item all propositional tautologies{,}
\item Distributivity:
$[\xi](\varphi \to \psi) \to ([\xi]\varphi \to [\xi]\psi)$ for all $\xi<\Lambda${,}

\item Transitivity:
{$[\xi] \varphi \to [\xi] [\xi]\varphi$ for all $\xi<\Lambda$}{,}

\item L\"ob:
{$[\xi]([\xi]\varphi \to \varphi)\to[\xi]\varphi$ for all $\xi<\Lambda$}{,}

\item Negative introspection:
$\<\zeta\>\varphi\to\<\xi\>\varphi$ for $\xi<\zeta<\Lambda${,}

\item Monotonicity:
$\<\xi\>\varphi\to [\zeta]\<\xi\>\varphi$ for $\xi<\zeta<\Lambda$.
\end{enumerate}
The rules are Modes Ponens and Necessitation for each modality: $\displaystyle \frac{\varphi}{\nboxBox{\xi}\varphi}$.
\end{definition}

The following lemma is proven in \cite{BeklemishevFernandezJoosten:2014:LinearlyOrderedGLP}.

\begin{lemma}\label{theorem:GLPconservativelyExtendsFragments}
The logic $\glp_\Lambda$ is conservative over $\glp_{\Lambda'}$ for $\Lambda'<\Lambda$.
\end{lemma}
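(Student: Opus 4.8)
The plan is to prove something slightly stronger and cleaner, namely that for $\Lambda' < \Lambda$ any theorem of $\glp_\Lambda$ whose modalities all come from $\Lambda'$ is already a theorem of $\glp_{\Lambda'}$; conservativity is just the special case where the formula is literally in the language of $\glp_{\Lambda'}$. The natural approach is a normalisation/projection argument on derivations. Given a $\glp_\Lambda$-derivation $d$ of a formula $\varphi \in \formgl[\Lambda']$ (abusing notation for the $\Lambda'$-restricted modal language), I would like to transform $d$ into a $\glp_{\Lambda'}$-derivation. The obstruction is that $d$ may mention modalities $[\xi]$ with $\xi \geq \Lambda'$ in intermediate steps, and those cannot simply be erased because axioms 5 and 6 (negative introspection and monotonicity) genuinely link high modalities to low ones.

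First I would fix the following substitution. Pick some ordinal $\mu$ with $\Lambda' \leq \mu < \Lambda$ — say $\mu = \Lambda'$ if $\Lambda'<\Lambda$ is a successor-style situation, but in general just take $\mu=\Lambda'$ and note below why it still works — and define a map $(\cdot)^\ast$ on $\formgl[\Lambda]$-formulas that replaces every modality $[\xi]$ with $\xi \geq \Lambda'$ by the single modality $[\Lambda'']$ for a suitable fixed $\Lambda'' < \Lambda'$... — here is where care is needed, and I would instead use the cleaner device: map each $[\xi]$ with $\xi \geq \Lambda'$ to $\bot$-prefixed triviality is wrong too. The correct classical trick (due to the referenced work of Beklemishev–Fernández–Joosten) is to send every high modality $[\xi]$, $\xi\geq\Lambda'$, to $[\,\cdot\,]$ with a \emph{new bottom} symbol, but since we must land inside $\glp_{\Lambda'}$, the honest route is: show by induction on $\glp_\Lambda$-derivations that if the \emph{conclusion} uses only modalities $<\Lambda'$ then one can reorganise the proof. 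Concretely, I would prove: for every finite set $S$ of ordinals below $\Lambda$ and every $\varphi$ provable in $\glp_\Lambda$ using only modalities indexed in $S$, letting $\pi\colon S \to \Lambda'$ be the unique order-isomorphism of $S$ onto an initial segment of $\Lambda'$ when $|S|\leq \Lambda'$ (and otherwise onto $\Lambda'$ with the top elements collapsed), the formula $\varphi^\pi$ obtained by relabelling is provable in $\glp_{\Lambda'}$. One checks axiom-by-axiom that each $\glp_\Lambda$ axiom instance, after applying $\pi$, becomes a $\glp_{\Lambda'}$ axiom instance or a theorem — distributivity, transitivity, Löb are preserved since they are schematic in a single modality; negative introspection and monotonicity are preserved because $\pi$ is order-preserving, so $\xi<\zeta$ implies $\pi(\xi)\leq\pi(\zeta)$, and when $\pi(\xi)=\pi(\zeta)$ the relevant instance $\langle\zeta\rangle\varphi\to\langle\xi\rangle\varphi$ becomes $\langle\pi(\zeta)\rangle\varphi^\pi\to\langle\pi(\zeta)\rangle\varphi^\pi$, a tautology, and similarly $\langle\xi\rangle\varphi\to[\zeta]\langle\xi\rangle\varphi$ becomes an instance of $\langle\eta\rangle\psi\to[\eta]\langle\eta\rangle\psi$ which is derivable in $\glp$ from transitivity and $\langle\eta\rangle\psi\to[\eta]\langle\eta\rangle\psi$... indeed $\langle\eta\rangle\psi\to[\eta]\langle\eta\rangle\psi$ is a standard $\glp$ theorem. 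The rules modus ponens and necessitation are obviously stable under $\pi$.

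The key final step is to observe that if $\varphi$ itself already lies in $\formgl[\Lambda']$, then every modality actually occurring in $\varphi$ is $<\Lambda'$, and we may choose the set $S$ (the modalities of the whole derivation) together with $\pi$ so that $\pi$ is the \emph{identity} on the modalities appearing in $\varphi$: this is possible because we can always list $S$ so that the elements already below $\Lambda'$ that occur in $\varphi$ map to themselves (the order-isomorphism onto an initial segment of $\Lambda'$ fixes any element that is already equal to its own rank within $S\cap\Lambda'$ provided $S$ below it is an initial segment — to guarantee this, first close $S$ downward, which only adds more axiom instances we can still handle). Hence $\varphi^\pi = \varphi$ and $\glp_{\Lambda'}\vdash\varphi$, giving conservativity.

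The main obstacle I anticipate is precisely the bookkeeping in that last paragraph: ensuring the relabelling $\pi$ can be taken to fix the modalities of the target formula $\varphi$ while still collapsing all the high modalities of the derivation into legal $\Lambda'$-indices, and in particular handling the case $|S\cap\Lambda|$ large relative to $\Lambda'$ (e.g. $\Lambda'$ finite) where genuine collapsing — not just order-preserving injection — is forced and one must verify that the \emph{non-injective} monotone map still sends every axiom instance to a theorem. Once the axiom-by-axiom check for a general monotone (not necessarily injective) $\pi$ is in hand, the induction on derivation length is routine and the downward-closure trick removes the remaining friction.
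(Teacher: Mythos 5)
First, a point of comparison: the paper does not actually prove this lemma --- it is imported from the cited work of Beklemishev, Fern\'andez-Duque and Joosten --- so your proposal is judged on its own terms. It has a genuine gap, located exactly at the step you defer as ``to be verified'': the non-injective case of the relabelling. When the monotone map $\pi$ is forced to identify two indices (which happens whenever $\Lambda'$ has no room above the modalities occurring in $\varphi$ to absorb the high modalities of the derivation --- e.g.\ $\Lambda'$ finite, or a successor whose top index already occurs in $\varphi$), a monotonicity instance $\la\xi\ra\varphi\to[\zeta]\la\xi\ra\varphi$ with $\pi(\xi)=\pi(\zeta)=\eta$ becomes $\la\eta\ra\psi\to[\eta]\la\eta\ra\psi$. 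You assert this is ``a standard $\glp$ theorem''; it is not. With $\psi=\top$ it gives $\la\eta\ra\top\to[\eta]\la\eta\ra\top$, and L\"ob's axiom instantiated at $\bot$ gives $[\eta]\la\eta\ra\top\to[\eta]\bot$; chaining these yields $\neg[\eta]\bot\to[\eta]\bot$, i.e.\ $[\eta]\bot$. So if that schema were derivable, $\glp$ would prove $[\eta]\bot$, contradicting its arithmetical soundness (and $\Diamond p\to\Box\Diamond p$ already fails in a two-point transitive conversely well-founded frame). Hence the only hard case of your induction cannot be closed, and the injective relabelling you describe covers only the easy situations (e.g.\ $\Lambda'$ a limit ordinal).

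The fix is to avoid collapsing entirely: use the translation that sends $[\xi]\psi$ to $\top$ (hence $\la\xi\ra\psi$ to $\bot$) for every $\xi\geq\Lambda'$ and is the identity on modalities below $\Lambda'$. Every $\glp_\Lambda$ axiom then translates to a $\glp_{\Lambda'}$ axiom or a propositional triviality --- monotonicity with high $\zeta$ becomes $\chi\to\top$, negative introspection with high $\zeta$ becomes $\bot\to\chi$, the single-modality schemata for high $\xi$ become $\top\to\top$ --- necessitation for a high modality just produces $\top$, and the translation fixes every formula in the language of $\glp_{\Lambda'}$, which is conservativity. This is essentially the ``triviality'' device you considered and discarded: the point is to send the \emph{box} to $\top$, not the formula to $\bot$, so that necessitation survives.
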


The lemma is particularly useful in proofs where you only have access to reasoning up to $\glp_\Lambda'$ and tells you that any statement formulated in this fragment can actually be proven there. We shall use this result throughout the paper, mostly without explicit mention. Let us now prove some basic properties that shall be needed later in the paper.

\begin{lemma}\label{theorem:basicGLPlemmas}\ 
\begin{enumerate}
\item\label{item:consistencyProvable:theorem:basicGLPlemmas}
$\glp \vdash [\alpha] \la \beta \ra \top$ whenever $\alpha>\beta$;

\item \label{item:bigConsEquivSmallCons:theorem:basicGLPlemmas}
For $\alpha > \beta$ we have $\glp \vdash \la \alpha \ra \top \to \Big( \la \beta\ra \varphi \leftrightarrow \la \alpha \ra \la \beta \ra \varphi \Big)$.

\item\label{item:boxDisjunctionDiamond:theorem:basicGLPlemmas}
For $\alpha \geq \beta >0$ we have $\glp \vdash \la \alpha \ra \top \ \to \  \Big( \la \beta \ra \phi \vee \Box \psi \Big) \ \leftrightarrow \ \Big(  \la \beta \ra \big( \phi \vee \Box \psi \big) \Big)$.

\end{enumerate}
\end{lemma}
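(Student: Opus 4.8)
The plan is to derive all three items inside \glp by routine propositional modal reasoning, after recording one convenient derived principle. Contraposing Negative introspection ($\la\zeta\ra\varphi\to\la\xi\ra\varphi$ for $\xi<\zeta$) and instantiating appropriately (using $\neg\neg\chi\leftrightarrow\chi$ under a modality) yields the ``box monotonicity'' scheme
\[
\glp\vdash[\xi]\chi\to[\zeta]\chi\qquad\text{whenever }\xi<\zeta .
\]
Besides this, the only tools used are the Monotonicity axiom $\la\xi\ra\varphi\to[\zeta]\la\xi\ra\varphi$ (for $\xi<\zeta$), Transitivity, and Distributivity together with pure propositional logic under the modalities.

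For item \ref{item:consistencyProvable:theorem:basicGLPlemmas} I argue by cases on the tautology $\la\beta\ra\top\vee[\beta]\bot$. If $\la\beta\ra\top$, then Monotonicity with $\xi=\beta<\alpha=\zeta$ and $\varphi:=\top$ gives $[\alpha]\la\beta\ra\top$ directly. If $[\beta]\bot$, box monotonicity gives $[\alpha]\bot$, and since $\bot\to\la\beta\ra\top$ is a tautology, Necessitation and Distributivity at $\alpha$ yield $[\alpha]\la\beta\ra\top$. Either way $\glp\vdash[\alpha]\la\beta\ra\top$.

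For item \ref{item:bigConsEquivSmallCons:theorem:basicGLPlemmas} the implication $\la\alpha\ra\la\beta\ra\varphi\to\la\beta\ra\varphi$ holds unconditionally: Negative introspection contracts $\la\alpha\ra\la\beta\ra\varphi$ to $\la\beta\ra\la\beta\ra\varphi$, and Transitivity read dually contracts that to $\la\beta\ra\varphi$. For the converse, assume $\la\alpha\ra\top$ and $\la\beta\ra\varphi$; the Monotonicity axiom gives $[\alpha]\la\beta\ra\varphi$, so were $[\alpha]\neg\la\beta\ra\varphi$ also to hold, Distributivity at $\alpha$ would produce $[\alpha]\bot$, contradicting $\la\alpha\ra\top$; hence $\la\alpha\ra\la\beta\ra\varphi$.

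For item \ref{item:boxDisjunctionDiamond:theorem:basicGLPlemmas}, note $\Box=[0]$ and $\beta>0$, so $0<\beta\leq\alpha$. The right-to-left implication needs no consistency assumption: assuming $\la\beta\ra(\phi\vee\Box\psi)$ and $\neg\Box\psi$, Monotonicity with $\xi=0<\beta$ gives $[\beta]\neg\Box\psi$, and distributing $\la\beta\ra$ over the disjunction then forces $\la\beta\ra\phi$. Left to right splits in two: if $\la\beta\ra\phi$, weaken under $\la\beta\ra$; if $\Box\psi$, apply Transitivity at level $0$ to get $\Box\Box\psi$, then box monotonicity to get $[\beta]\Box\psi$, hence $[\beta](\phi\vee\Box\psi)$, and finally use $\la\alpha\ra\top\to\la\beta\ra\top$ (Negative introspection, since $\beta\leq\alpha$) to upgrade this box to the diamond $\la\beta\ra(\phi\vee\Box\psi)$, as $[\beta](\phi\vee\Box\psi)$ together with $[\beta]\neg(\phi\vee\Box\psi)$ would contradict $\la\beta\ra\top$. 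The one spot needing care — and the only place the hypothesis $\la\alpha\ra\top$ is genuinely used (likewise in item \ref{item:bigConsEquivSmallCons:theorem:basicGLPlemmas}) — is this last box-to-diamond step: one must not try to send $\Box\psi$ to $[\beta]\Box\psi$ via the Monotonicity axiom, which only transports \emph{diamonds}, but route through Transitivity and the derived box monotonicity instead. I expect that to be the main (and rather minor) obstacle; the rest is bookkeeping.
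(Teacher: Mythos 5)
Your proof is correct and follows essentially the same route as the paper's: a case split via Monotonicity versus ex falso under the box for item \ref{item:consistencyProvable:theorem:basicGLPlemmas}, Monotonicity plus Negative introspection and Transitivity for item \ref{item:bigConsEquivSmallCons:theorem:basicGLPlemmas}, and the same treatment of the $\Box\psi$ case and the $[\beta]\neg\Box\psi$ step (from Monotonicity at $\xi=0<\beta$) for item \ref{item:boxDisjunctionDiamond:theorem:basicGLPlemmas}. Your explicit derivation of box monotonicity from contraposed Negative introspection is in fact tidier than the paper's own axiom bookkeeping.
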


\begin{proof}
We reason in \glp.

For {\bf Item \ref{item:consistencyProvable:theorem:basicGLPlemmas}}:
If $\la \beta \ra \top$, then $[\alpha] \la \beta \ra \top$ by the negative introspection axiom. In case $[\beta]\bot$ we get by an \emph{ex falso} under the $[\beta]$ modality that $[\beta]\la \beta \ra \top$ whence $[\alpha]\la \beta \ra \top$ by monotonicity.

For {\bf Item \ref{item:bigConsEquivSmallCons:theorem:basicGLPlemmas}} we work under the assumption that $\la \alpha \ra \top$. From $\la \beta \ra \varphi$ we get, since $\beta <\alpha$, that $[\alpha]\la \beta\ra \varphi$ so from $\la \alpha \ra \top$ we get $\la \alpha \ra \la \beta \ra\varphi$. For the other direction, from $\la \alpha \ra \la \beta \ra \varphi$ we get by monotonicity that $\la \beta \ra \la \beta \ra \varphi$ whence by transitivity we obtain the required $\la \beta \ra \varphi$.

For {\bf Item \ref{item:boxDisjunctionDiamond:theorem:basicGLPlemmas}}: We now work under the assumption that $\la \alpha \ra \top$. The only case to consider in the $\rightarrow$ direction is when $\Box \psi$ holds. Then, $\Box \Box \psi$ whence $[\alpha] \Box \psi$ which together with $\la \alpha \ra \top$ yields $\la \alpha \ra \Box\psi$ whence $\la \beta \ra \big( \phi \vee \Box \psi \big)$.

For the $\leftarrow$ direction
we need to prove $\la \beta \ra \big( \phi \vee \Box \psi \big) \to \la \beta \ra \phi \vee \Box \psi$. So, suppose $\la \beta \ra \big( \phi \vee \Box \psi \big)$ and $\neg \Box  \psi$ whence $[\beta]\neg \Box \psi$. But since $\la \beta \ra \big( \phi \vee \Box \psi \big)$ we must have $\la \beta \ra \phi$ and by weakening $\la \beta \ra  \phi \vee \Box \psi$.
\end{proof}

\subsection{Transfinite induction and its kin}
In various arguments we will have to prove that a statement $\varphi$ holds for all ordinals $\alpha$. Often we will prove this by transfinite recursion on $\alpha$. However, in certain cases, transfinite induction is not available. In such cases there is a technique called \emph{reflexive induction}. 

The principle of reflexive induction can syntactically be seen as twice weakening regular transfinite induction. Recall that transfinite induction for a formula $\varphi$ is 
\[
{\sf TI}_\varphi \ \ := \ \ \forall \alpha \big( \forall \, \beta{<}\alpha \varphi (\beta) \to \varphi (\alpha) \big) \ \to \ \forall \alpha \varphi (\alpha).
\]
and for a set of formulas $\Gamma$ the principle ${\sf TI}(\Gamma)$ denotes the collection of all ${\sf TI}_\varphi$ for $\varphi \in \Gamma$. As a first weakening one could consider the rule based version: from $T\vdash \forall \alpha \big( \forall \, \beta{<}\alpha \varphi (\beta) \to \varphi (\alpha) \big)$, conclude $T\vdash \forall \alpha \varphi (\alpha)$. Now, one can change the antecendent to $T\vdash \forall \alpha \big( \Box_T\forall \, \beta{<}\alpha \varphi (\dot \beta) \to \varphi (\alpha) \big)$ to arrive at reflexive induction. However, it turns out that by doing so, it has lost all its strength. That, is, the resulting principle is provable in almost any theory:

\begin{theorem}[Reflexive induction]\label{theorem:ReflexiveTransfiniteInduction}
Let $T$ be any theory capable of coding syntax. If $T\vdash \forall \alpha \Big( \Box_T \big(\forall \beta < \dot \alpha \ \varphi (\beta)\big) \to \varphi(\alpha) \Big)$, then
$T\vdash \forall \alpha \varphi (\alpha)$.
\end{theorem}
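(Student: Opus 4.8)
The plan is to derive reflexive induction from L\"ob's theorem applied inside $T$, exploiting the fact that the ordinal ordering $<$ is (verifiably in $T$) well-founded so that $T$ proves the formalised statement ``if every proper initial segment of ordinals below $\alpha$ satisfies $\varphi$, then so does $\alpha$'' collapses under the provability box. Concretely, let $\psi(\alpha) := \forall\beta{<}\alpha\,\varphi(\beta)$, and set $\theta := \forall\alpha\,\psi(\alpha)$, which is equivalent to $\forall\alpha\,\varphi(\alpha)$ over the little arithmetic inside $T$. I would show $T\vdash \Box_T\theta \to \theta$ and then invoke L\"ob's theorem (available since $T$ codes syntax) to conclude $T\vdash\theta$, hence $T\vdash\forall\alpha\,\varphi(\alpha)$.

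The key step is the implication $T\vdash \Box_T\theta\to\theta$, and here is where the hypothesis enters. Reason in $T$ and assume $\Box_T\theta$, i.e. $\Box_T\forall\alpha\,\psi(\alpha)$. Fix an ordinal $\alpha$; I must show $\psi(\alpha)$, i.e. $\forall\beta{<}\alpha\,\varphi(\beta)$. So fix $\beta<\alpha$. From $\Box_T\forall\alpha\,\psi(\alpha)$ I can, by formalised universal instantiation (provably in $T$, since substitution of a numeral is an elementary operation on codes), derive $\Box_T\,\psi(\dot\beta)$, that is $\Box_T(\forall\gamma{<}\dot\beta\,\varphi(\gamma))$. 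But the hypothesis of the theorem, instantiated at $\beta$, gives precisely $\Box_T(\forall\gamma{<}\dot\beta\,\varphi(\gamma))\to\varphi(\beta)$. Hence $\varphi(\beta)$. Since $\beta<\alpha$ was arbitrary, $\psi(\alpha)$ holds; since $\alpha$ was arbitrary, $\theta$ holds. This establishes $T\vdash\Box_T\theta\to\theta$.

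Finally, by L\"ob's theorem for $T$ (the arithmetised version: $T\vdash\Box_T\chi\to\chi$ implies $T\vdash\chi$), applied with $\chi=\theta$, we get $T\vdash\theta$, i.e. $T\vdash\forall\alpha\,\forall\beta{<}\alpha\,\varphi(\beta)$, and a fortiori $T\vdash\forall\alpha\,\varphi(\alpha)$ (take $\alpha'=\alpha+1$ or argue directly). One small point worth flagging explicitly is that we never needed $\forall\alpha\,\psi(\alpha)\leftrightarrow\forall\alpha\,\varphi(\alpha)$ as an external equivalence of full strength; it suffices that $T$ proves each direction, which it does by trivial first-order reasoning about $<$.

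The main obstacle is not conceptual but a matter of formalisation hygiene: one must be careful that the passage from $\Box_T\forall\alpha\,\psi(\alpha)$ to $\Box_T\psi(\dot\beta)$ is carried out \emph{inside} $T$ with the external variable $\beta$, which requires that $T$ proves the formalised instantiation schema $\Box_T\forall x\,\eta(x)\to\Box_T\eta(\dot x)$ uniformly in $x$ — a standard provable-$\Sigma^0_1$-completeness-style fact that holds in any theory coding syntax, in particular in \ea and all the theories considered here. Once that derivability condition is in place, the argument is a direct application of L\"ob's theorem and needs no transfinite induction, which is exactly the point: reflexive induction is ``free''.
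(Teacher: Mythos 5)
Your overall strategy is the same as the paper's: derive a statement of the form $\Box_T\chi\to\chi$ from the hypothesis via formalised universal instantiation under the box, and then invoke L\"ob's theorem; your derivation of $T\vdash\Box_T\theta\to\theta$ is correct and needs no assumption on $<$. The one genuine weak point is the choice to apply L\"ob to $\theta=\forall\alpha\,\forall\beta{<}\alpha\,\varphi(\beta)$ rather than to $\forall\alpha\,\varphi(\alpha)$ itself, which forces you to recover $\forall\alpha\,\varphi(\alpha)$ from $\theta$ at the end. That implication is \emph{not} ``trivial first-order reasoning about $<$'': it requires that every $\alpha$ have a strict $<$-upper bound, and your fallback ``take $\alpha'=\alpha+1$'' presupposes successors. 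The theorem imposes no such condition on $<$ (and the paper explicitly stresses that the argument works for an arbitrary relation); for an order with a $<$-maximal element $\mu$, the sentence $\theta$ says nothing at all about $\varphi(\mu)$, so your claimed equivalence of $\theta$ with $\forall\alpha\,\varphi(\alpha)$ fails.

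The gap is easily repaired in either of two ways. (i) Apply L\"ob directly to $\chi=\forall\alpha\,\varphi(\alpha)$, as the paper does: assume $\Box_T\forall\alpha\,\varphi(\alpha)$, weaken under the box to $\Box_T\theta$, and then run exactly your second paragraph to obtain $\varphi(\alpha)$ for the fixed $\alpha$ --- your own computation already contains every step needed, applied at $\alpha$ instead of at some $\beta<\alpha$. (ii) Keep your $\theta$: after L\"ob gives $T\vdash\theta$, provable $\Sigma^0_1$-completeness yields $T\vdash\Box_T\theta$, formalised instantiation gives $T\vdash\forall\alpha\,\Box_T(\forall\beta{<}\dot\alpha\,\varphi(\beta))$, and one more application of the hypothesis gives $T\vdash\forall\alpha\,\varphi(\alpha)$. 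Either patch is two lines, and your ``or argue directly'' presumably gestures at (ii), but as written the final step does not follow.
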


Although this principle is well known since Schmerl's work (\cite{Schmerl:1978:FineStructure}) we include a proof to emphasize that the principle actually does not rely at all on the fact that $<$ is a well-order. As a matter of fact, the proof goes through for any kind of relation and basically boils down to an application of L\"ob's Theorem.

\begin{proof}
We shall see that from the assumption 
\[
T\vdash \forall \alpha \Big( \Box_T \big(\forall \, \beta {<} \dot \alpha \ \varphi (\beta)\big) \to \varphi(\alpha) \Big)
\] 
we get $T\vdash \Box_T \forall \alpha \varphi (\alpha) \to \forall \alpha \varphi (\alpha)$ so that the conclusion $T\vdash \forall \alpha \varphi (\alpha)$ follows by L\"ob's Theorem.

Thus, we reason in $T$, pick $\alpha$ arbitrary, we assume $\Box_T \forall \alpha \varphi (\alpha)$, or equivalently $\Box_T \forall \theta \varphi (\theta)$, and set out to prove $\varphi (\alpha)$. But using $\Box_T \big(\forall \, \beta {<} \dot \alpha \ \varphi (\beta)\big) \to \varphi(\alpha)$ in the last step of the following reasoning, we clearly have
\[ 
\begin{array}{lll}
\Box_T \forall \theta \varphi (\theta) &\to& \Box_T \forall \theta \forall \, \beta{<}\theta \ \varphi (\beta)\\
 \ & \to &\forall \theta \, \Box_T \forall \, \beta{<}\dot\theta \ \varphi (\beta)\\
 \ & \to &\Box_T \forall \, \beta{<}\dot\alpha \ \varphi (\beta)\\
 \ & \to &\varphi (\alpha).\\
\end{array}
\]
\end{proof}

On occasion, in this paper we will have to combine regular transfinite induction and reflexive induction. We call this amalgamate \emph{transfinite reflexive induction}.

\begin{lemma}[Transfinite reflexive induction]\label{theorem:TransfiniteReflexiveInduction}
Let $T$ be a theory with a sufficient amount of transfinite induction as specified below and let $\prec$ be a well-order in $T$. 
\[
\begin{array}{ll}
\mbox{If } & \\
 & T\vdash \forall \, \alpha \Big(\forall\, \beta{\prec}\alpha \, \varphi(\beta) \ \wedge \ \Box_T \big( \forall\, \beta{\prec}\dot\alpha \, \varphi(\beta)\big) \ \to \ \varphi (\alpha) \Big),\\
\mbox{then} & \\
 & T\vdash \forall \alpha \ \varphi (\alpha).
\end{array}
\] 
To prove transfinite reflexive induction for $\varphi$ it suffices that $T$ is capable of coding syntax and proves transfinite induction for formulas of the form $\Box_T\chi \to \varphi$. 
\end{lemma}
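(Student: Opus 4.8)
The plan is to merge the L\"ob-style trick behind Theorem~\ref{theorem:ReflexiveTransfiniteInduction} with ordinary transfinite induction, applying the latter not to $\varphi$ itself (for which $T$ need not prove transfinite induction) but to the formula $\theta(\alpha):=\Box_T\chi\to\varphi(\alpha)$, where $\chi$ is the \emph{closed} sentence $\forall\alpha\,\varphi(\alpha)$. Since $\chi$ is closed, $\theta$ is precisely of the form ``$\Box_T\chi\to\varphi$'' for which $T$ is assumed to prove transfinite induction, so this application is legitimate. By L\"ob's Theorem for $\Box_T$ (available because $T$ codes syntax, hence proves the Hilbert--Bernays--L\"ob derivability conditions), it suffices to establish $T\vdash\Box_T\chi\to\chi$.

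First I would reason in $T$ and record, exactly as in the proof of Theorem~\ref{theorem:ReflexiveTransfiniteInduction}, the implication chain $\Box_T\,\forall\theta'\,\varphi(\theta')\ \to\ \forall\theta'\,\Box_T\big(\forall\,\beta{\prec}\dot\theta'\,\varphi(\beta)\big)\ \to\ \Box_T\big(\forall\,\beta{\prec}\dot\alpha\,\varphi(\beta)\big)$, where the first step uses distributivity of $\Box_T$ over the logically valid $\forall\theta'\varphi(\theta')\to\forall\theta'\forall\beta{\prec}\theta'\varphi(\beta)$ together with the provable ``$\forall$-elimination under the box'' $\Box_T\forall x\,\psi(x)\to\forall x\,\Box_T\psi(\dot x)$, and the last step simply instantiates $\theta':=\alpha$. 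This yields $T\vdash\Box_T\chi\to\forall\alpha\,\Box_T\big(\forall\,\beta{\prec}\dot\alpha\,\varphi(\beta)\big)$.

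Next I would verify the induction step for $\theta$: reasoning in $T$, fix $\alpha$, assume $\forall\,\beta{\prec}\alpha\,\theta(\beta)$, and (aiming at $\theta(\alpha)$) assume $\Box_T\chi$; it remains to derive $\varphi(\alpha)$. From $\forall\,\beta{\prec}\alpha\,\theta(\beta)$ together with $\Box_T\chi$ we obtain $\forall\,\beta{\prec}\alpha\,\varphi(\beta)$, while from $\Box_T\chi$ the previous paragraph gives $\Box_T\big(\forall\,\beta{\prec}\dot\alpha\,\varphi(\beta)\big)$; feeding both into the hypothesis of the lemma delivers $\varphi(\alpha)$. Hence $T\vdash\forall\alpha\big(\forall\,\beta{\prec}\alpha\,\theta(\beta)\to\theta(\alpha)\big)$, and transfinite induction for $\theta$ yields $T\vdash\forall\alpha\,\theta(\alpha)$, i.e.\ $T\vdash\Box_T\chi\to\chi$; L\"ob's Theorem then gives $T\vdash\chi$, as desired. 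The only genuinely non-routine point — and the expected main obstacle — is the passage from $\Box_T\chi$ to $\Box_T\big(\forall\beta{\prec}\dot\alpha\,\varphi(\beta)\big)$, i.e.\ internal provability of a universal statement entailing internal provability of each numerical instance, which is exactly where ``$T$ codes syntax'' is used, just as in Theorem~\ref{theorem:ReflexiveTransfiniteInduction}; it is also worth noting that no genuine use is made of $\prec$ being a well-order, the well-ordering entering only through the assumed transfinite induction for formulas of the form $\Box_T\chi\to\varphi$.
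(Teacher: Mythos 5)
Your proof is correct, and it achieves the same combination of ingredients (transfinite induction on a box-guarded formula plus L\"ob's theorem), but the decomposition is genuinely different from the paper's. The paper applies transfinite induction to $\Box_T\big(\forall\,\beta{\prec}\dot\alpha\,\varphi(\beta)\big)\to\varphi(\alpha)$ — where the boxed formula varies with $\alpha$ through the numeral — in order to establish exactly the hypothesis of the already-proved reflexive induction lemma (Theorem~\ref{theorem:ReflexiveTransfiniteInduction}), and then invokes that lemma as a black box; L\"ob's theorem is used only indirectly, inside that earlier proof. You instead apply transfinite induction to $\Box_T\chi\to\varphi(\alpha)$ for the single \emph{closed} sentence $\chi=\forall\alpha\,\varphi(\alpha)$, obtain $\Box_T\chi\to\chi$ in one pass, and finish with an explicit application of L\"ob. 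Both induction formulas are instances of the stated sufficient condition ``transfinite induction for formulas of the form $\Box_T\chi\to\varphi$''; yours is arguably the cleaner instance, since $\chi$ is a fixed sentence rather than a formula with $\alpha$ occurring dotted under the box, and your uniform treatment of the induction step absorbs the base case that the paper handles separately. What you give up is modularity: the paper's route isolates the L\"ob trick once and for all in Theorem~\ref{theorem:ReflexiveTransfiniteInduction} and reuses it, whereas your argument re-derives the key chain $\Box_T\chi\to\forall\alpha\,\Box_T\big(\forall\,\beta{\prec}\dot\alpha\,\varphi(\beta)\big)$ inline. Your closing observation that well-orderedness enters only through the assumed induction schema is accurate and matches the paper's remark after Theorem~\ref{theorem:ReflexiveTransfiniteInduction}.
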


\begin{proof}
To start our proof we assume 

\begin{equation}\label{equation:AssumptionTRI}
T\vdash \forall \, \alpha \Big(\forall\, \beta{\prec}\alpha \, \varphi(\beta) \ \wedge \ \Box_T \big( \forall\, \beta{\prec}\dot\alpha \, \varphi(\beta)\big) \ \to \ \varphi (\alpha) \Big).
\end{equation}

We will prove by transfinite induction on $\alpha$ that 

\begin{equation}\label{equation:AssumptionRTI}
T \vdash \forall \alpha\ \Big(\Box_T \forall\, \beta{\prec}\dot\alpha \, \varphi (\beta) \to \varphi (\alpha)\Big)
\end{equation}

so that the result $T\vdash \forall \alpha \, \varphi (\alpha)$ follows by reflexive induction (Lemma \ref{theorem:ReflexiveTransfiniteInduction}). Proving \eqref{equation:AssumptionRTI} for $\alpha=0$ amounts to showing that $T\vdash \varphi(0)$ which follows directly from \eqref{equation:AssumptionTRI}.

For the inductive step, we reason in $T$, fix some $\alpha>0$, assume that 

\begin{equation}\label{equation:InductiveAssumptionRTI}
\forall \, \beta{\prec}\alpha\ \Big(\Box_T \forall\, \gamma{\prec}\dot\beta \, \varphi (\gamma) \to \varphi (\beta)\Big)
\end{equation}

and set out to prove

\begin{equation}\label{equation:InductiveStepRTI}
 \Box_T \forall\, \gamma{\prec}\dot\alpha \, \varphi (\gamma) \to \varphi (\alpha).
\end{equation}

To this end, we further assume that $\Box_T \forall\, \gamma{\prec}\dot\alpha \, \varphi (\gamma)$, so that certainly we have $\forall \, \beta{\prec}\alpha\ \Box_T \forall\, \gamma{\prec}\dot\beta \, \varphi (\gamma)$. Combining the latter with \eqref{equation:InductiveAssumptionRTI} yields $\forall \, \beta{\prec}\alpha\ \varphi(\beta)$. This, together with our assumption $\Box_T \forall\, \gamma{\prec}\dot\alpha \, \varphi (\gamma)$ is the antecedent of \eqref{equation:AssumptionTRI} so that we may conclude $\varphi (\alpha)$ which finishes the proof.
\end{proof}

\section{Theories for Single Oracle M\"unchhausen provability}\label{section:TuringJumpProvability}

Throughout this section, we fix some ordinal $\Lambda$ and understand that all ordinals denoted in this section are majorized by $\Lambda$.

\subsection{Single Oracle M\"unchhausen provability}

We are interested in theories $T$ that can formalize a provability notion so that provably in $T$ the following recursion holds

\begin{equation}\label{equation:ramifiedProvability}
{[\zeta]}^\Lambda_T \phi \ \ 
:\Leftrightarrow \ \ 
\Box_T \phi \ \vee \ \exists \psi\, \exists \, \xi{<}\zeta\ \big({\la \xi \ra}^\Lambda_T \psi \ \wedge \ \Box_T ({\la \xi \ra}^\Lambda_T \psi \to \phi)\big).
\end{equation}

Here, $\Box_T \varphi$ will denote a standard predicate on the natural numbers expressing ``the formula (with G\"odel number) $\varphi$ is provable in the theory $T$". Further, it is understood that ${\la \xi \ra}^\Lambda_T$ stands for $\neg [\xi]^\Lambda_T \neg$.

Rather than exposing a concrete theory where this recursion is formalizable in a particular way and provable, we will define a class of theories that are able to define and prove this recursion and have some additional desirable properties. 

Next we shall see which properties of the predicates $[\zeta]^\Lambda_T$ can be proven from the mere recursion defined in \eqref{equation:ramifiedProvability}. It will turn out that under some fairly general conditions we can prove the collection of predicates $[\zeta]^\Lambda_T$ for $\zeta<\Lambda$ to provide a sound interpretation for $\glp_\Lambda$. 

In Section \ref{section:completeness} we shall see that by requiring slightly more on our predicate and theory, this will give us arithmetical completeness.

In principle it would make sense to study \eqref{equation:ramifiedProvability} at a higher level of generality. For example, $T$  could be some version of set-theory allowing for uncountable $\Lambda$. As long as \eqref{equation:ramifiedProvability} is provable together withs some additional conditions, most of the results of this paper will carry over. It would be natural to require $\Box_T$ to be such that all \gl theorems are schematically provable in $T$ in such a setting.

\subsection{Theories amenable for Single Oracle M\"unchhausen provability}
For the sake of readability we shall often not distinguish between an ordinal $\alpha<\Gamma$, a notation for such an $\alpha$ or even an arithmetization of such a notation for $\alpha$. We shall however be explicit about the difference between the ordering $<$ on the ordinals and the arithmetization $\prec$ of this ordering on ordinals.

\begin{definition}
Let $T$ be a theory and let $\Lambda$ denote an ordinal equipped with a representation in the language of $T$ with corresponding represented ordering $\prec$. For this representation, it is required that
\[
\begin{array}{l}
T\vdash ``{\prec}\mbox{ is transitive, right-discrete and has a minimal element}",\\
T\vdash (\xi \prec \zeta) \to [\zeta]^\Lambda_T (\xi \prec \zeta),\\
\mbox{$\xi < \zeta < \Lambda$ implies $T\vdash \xi \prec \zeta$.\footnotemark}
\end{array}
\]
\footnotetext{This requirement can be dropped if we are happy with a soundness proof where all ordinals are internally quantified. In this case we assume that each $\alpha<\Gamma$ has a natural representation in $T$ so that it makes sense to speak about the soundness of the necessitation rule.}
We call $T$ a \emph{Single Oracle $\Lambda$-M\"unchhausen Theory} --or  a \emph{$\Lambda$-One-M\"unchhausen Theory} for short-- whenever there is a binary predicate $[\xi]_T^\Lambda \varphi$ with free variables $\xi$ and $\varphi$ so that
\[
T\vdash \forall \varphi\ \forall \alpha{\prec}\Lambda \Big(\ {[\zeta]_T^\Lambda} \phi \  
\leftrightarrow  \ 
\Box_T \phi \ \vee \ \exists \psi\, \exists \, \xi{\prec}\zeta\ \big({\la \xi \ra^\Lambda_T} \psi \ \wedge \ \Box_T (\, {\la \xi \ra^\Lambda_T} \psi \to \phi) \, \big) \, \Big).
\]
In this case, we call the binary predicate $[\xi]_T^\Lambda \varphi$ a corresponding 1-M\"unchhausen provability predicate.
\end{definition}

The ``One" in ``$\Lambda$-One-M\"unchhausen Theory" refers to the fact that provability $[\zeta]_T^\Lambda$ at level $\zeta$ makes use of one single oracle sentence $\la \xi \ra^\Lambda_T \psi$. In Section \ref{section:MunchhausenProvabilityMultipleOracle} we shall see variations where we allow various oracle sentences to occur.

Often shall we simply drop the, or some of the indices of $[\xi]^\Lambda_T$ like for example in $[\xi]_T \varphi$ in case the ordinal $\Lambda$ is clear from the context. To shorten nomenclature further, we shall mostly simply speak of 1-M\"unchhausen theories and the corresponding 1-M\"unchhausen provability. Often, when we speak of 1-M\"unchhausen theories we implicitly assume that we have fixed some 1-M\"unchhausen provability predicate $[\alpha]\varphi$.

\begin{observation}
Since any 1-M\"unchhausen theory $T$ proves that there is a $\prec$-minimal element, we shall use the notation $0$ for this element even if the natural number (or object) representing this minimal element is not the natural number zero. Likewise, from right-discreteness we know that for any element $\alpha\prec \Lambda$, there is a next bigger element that we shall suggestively call $\alpha+1$. In analogy, we shall denote $0+1$ by $1$, $1+1$ by $2$, $2+1$ by $3$, etcetera. 
\end{observation}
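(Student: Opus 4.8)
The statement is essentially a bookkeeping remark, and the plan is to read it off from the three standing requirements imposed on the pair $(\Lambda,\prec)$ in the definition, together with one short meta-level induction. It splits into three clauses: the existence (and naming) of a $\prec$-minimal element, the existence (and naming) of immediate $\prec$-successors, and the consistency of the numeral notation $1,2,3,\dots$ with the fixed representation of $\Lambda$.

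For the first clause there is nothing to do beyond unpacking the hypothesis $T\vdash$\,``$\prec$ is transitive, right-discrete and has a minimal element'', which supplies $T\vdash\exists x\,\forall y\,(x=y\vee x\prec y)$ on whatever formalisation of ``has a minimal element'' has been fixed. I would then fix once and for all a definable object --- say via the elementary formula $\mathrm{Min}(x)$ saying ``$x$ is $\prec$-least'', or a corresponding Skolem constant --- and baptise it $0$; the only point of the remark is that this $0$ need not be the syntactic numeral $\overline{0}$. For the second clause I would read ``right-discrete'' as the assertion, provable in $T$, that every $\alpha$ with $\alpha\prec\Lambda$ has an immediate $\prec$-successor; fixing a $\Delta_0$ definition of the map sending such an $\alpha$ to that successor, I write $\alpha+1$ for its value.

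The numeral notation is then introduced by recursion on the meta-level natural number $n$: put $1:=0+1$, $2:=1+1$, and in general $(n{+}1):=n+1$ --- meaningful precisely for those $n$ whose iterated successor stays $\prec\Lambda$ (all finite $n$ when $\Lambda\geq\omega$, the first $\Lambda$ many otherwise). To see that this is consistent with the representation of ordinals already fixed --- that the object just christened $n$ is, provably in $T$, the representation of the ordinal $n<\Lambda$ --- I would invoke the third standing requirement, $\xi<\zeta<\Lambda\Rightarrow T\vdash\xi\prec\zeta$: it furnishes a chain of representations of $0,1,2,\dots$ each $\prec$-below the next, and since (in the well-order case discussed below) the representation of $0$ is $\prec$-least and the representation of $m{+}1$ is the immediate $\prec$-successor of that of $m$, a routine meta-level induction on $n$ identifies this chain with the successor tower $0,\,0{+}1,\,(0{+}1){+}1,\,\dots$ link by link.

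The one genuinely delicate point --- and hence the ``main obstacle'', such as it is --- is the well-definedness of \emph{the} next bigger element, and of \emph{the} minimal element: transitivity together with right-discreteness need not force immediate successors (or minima) to be unique. In every instance we actually care about, however, $\prec$ is a genuine well-order provably in $T$, so uniqueness, together with trichotomy, is available; the $0$ and $+1$ notations are to be understood modulo that convention, and with it in place everything above is pure bookkeeping.
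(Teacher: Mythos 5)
Your proposal is correct and matches the paper's (implicit) justification exactly: the observation carries no proof in the paper, being pure bookkeeping read off the three standing requirements on $(\Lambda,\prec)$ in the definition of a 1-M\"unchhausen theory. Your remark that transitivity plus right-discreteness alone do not force uniqueness of the minimal element or of immediate successors is a fair point the paper glosses over, and your resolution (linearity/well-orderedness in all intended instances) is the right one.
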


The following observation is immediate.
\begin{lemma}\label{theorem:ZeroMunchhausenIsNormalProvability}
Let $T$ be a Single Oracle $\Lambda$-M\"unchhausen Theory with corresponding 1-M\"unchhausen provability predicate $[\alpha]^\Lambda_T$. We have that
\[
T \vdash \ \forall \varphi \ \big( [0]_T^\Lambda \varphi  \ \leftrightarrow \Box_T \varphi \big).
\]
\end{lemma}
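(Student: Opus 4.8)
The plan is to read the claim off directly from the defining recursion of a $\Lambda$-One-M\"unchhausen theory, instantiated at level $0$. Recall that, by hypothesis, $T$ proves
\[
{[\zeta]_T^\Lambda}\phi \ \leftrightarrow \ \Box_T \phi \ \vee \ \exists \psi\, \exists \, \xi{\prec}\zeta\ \big({\la \xi \ra^\Lambda_T} \psi \ \wedge \ \Box_T (\, {\la \xi \ra^\Lambda_T} \psi \to \phi)\big)
\]
uniformly for all $\phi$ and all $\zeta \prec \Lambda$. I would reason inside $T$, fix an arbitrary $\phi$, and substitute $\zeta := 0$, where $0$ denotes the $\prec$-minimal element whose existence $T$ proves (and where $0 \prec \Lambda$, the case of a trivial $\Lambda$ being vacuous). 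Then the right-hand disjunct becomes $\exists \psi\, \exists \xi{\prec}0\,(\cdots)$; since $T$ proves that $0$ is $\prec$-minimal, it proves $\neg\exists\xi\,(\xi\prec 0)$, so this disjunct is refutable in $T$. Hence $T$ proves $[0]_T^\Lambda\phi \leftrightarrow \Box_T\phi$, and as $\phi$ was arbitrary and the original equivalence was derived under a universal quantifier over formulas, $T\vdash \forall\varphi\,([0]_T^\Lambda\varphi \leftrightarrow \Box_T\varphi)$.

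The only step deserving an explicit word is the derivation inside $T$ of $\neg\exists\xi\,(\xi\prec 0)$ from the stipulated properties of $\prec$. This is immediate once one unwinds what ``$\prec$ has a minimal element'' means for the element we have chosen to name $0$: minimality is precisely the absence of a strict $\prec$-predecessor. (Should one prefer to read the stipulation as ``$0$ is $\prec$-least'', the same conclusion follows by additionally invoking the implicitly assumed irreflexivity of $\prec$.) This is the main --- and essentially the only --- obstacle, and it is a triviality.

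No transfinite or reflexive induction, and indeed no machinery beyond a single instantiation, is needed here; accordingly the statement is flagged as immediate in the text, and I would present the proof in one or two lines along the above pattern.
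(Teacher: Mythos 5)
Your proof is correct and matches the paper's intent exactly: the paper offers no proof at all, flagging the lemma as an immediate observation, and the argument it has in mind is precisely your instantiation of the defining recursion at $\zeta=0$ together with the $T$-provable emptiness of $\{\xi : \xi\prec 0\}$. Your parenthetical care about ``minimal'' versus ``least'' is a reasonable bit of extra diligence but changes nothing.
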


When working with sound theories, we know that all the corresponding 1-M\"unchhausen consistency statements are actually true:

\begin{proposition}\label{theorem:MHconsistencyStatementsAreTrue}
Let $T$ be a sound Single Oracle $\Lambda$-M\"unchhausen Theory with corresponding 1-M\"unchhausen provability predicate $[\alpha]^\Lambda_T$. Then for each $\xi \prec \Lambda$ we have $\mathbb N \models \la{\xi}\ra_T^\Lambda\top$.
\end{proposition}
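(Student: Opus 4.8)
The plan is to prove $\mathbb N \models \la \xi \ra_T^\Lambda \top$ for every $\xi \prec \Lambda$ by transfinite induction on $\xi$, using the soundness of $T$ as the only non-recursive ingredient. Recall that $\la \xi \ra_T^\Lambda \top$ abbreviates $\neg [\xi]_T^\Lambda \bot$, and that by the defining recursion \eqref{equation:ramifiedProvability} we have, provably in $T$ and hence in $\mathbb N$,
\[
[\xi]_T^\Lambda \bot \ \leftrightarrow \ \Box_T \bot \ \vee \ \exists \psi\, \exists\, \zeta{\prec}\xi\ \big( \la \zeta \ra_T^\Lambda \psi \ \wedge \ \Box_T(\la \zeta \ra_T^\Lambda \psi \to \bot)\big).
\]
So it suffices to show, for each $\xi \prec \Lambda$, that neither disjunct on the right holds in $\mathbb N$.

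First I would dispose of the first disjunct: since $T$ is sound, $T$ is consistent, so $\mathbb N \not\models \Box_T \bot$; this holds uniformly for all $\xi$. For the second disjunct I would argue by contradiction using the induction hypothesis. Suppose $\xi \prec \Lambda$ and that for all $\zeta \prec \xi$ we have $\mathbb N \models \la \zeta \ra_T^\Lambda \top$, but that the second disjunct holds in $\mathbb N$: there is a formula $\psi$ and an ordinal (notation) $\zeta \prec \xi$ with $\mathbb N \models \la \zeta \ra_T^\Lambda \psi$ and $\mathbb N \models \Box_T(\la \zeta \ra_T^\Lambda \psi \to \bot)$. The second fact says $T \vdash \neg \la \zeta \ra_T^\Lambda \psi$, i.e. $T \vdash [\zeta]_T^\Lambda \neg\psi$; by soundness of $T$ this gives $\mathbb N \models [\zeta]_T^\Lambda \neg \psi$, which contradicts $\mathbb N \models \la \zeta \ra_T^\Lambda \psi = \neg [\zeta]_T^\Lambda \neg \psi$. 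Hence the second disjunct fails, and $\mathbb N \models \la \xi \ra_T^\Lambda \top$.

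Two points need a little care, and these are where I expect the only real (though mild) obstacles. The first is that the quantifier $\exists\, \zeta \prec \xi$ in the recursion ranges over the internal arithmetization, so a witness $\zeta$ is a priori just an element of $\mathbb N$ with $\mathbb N \models \zeta \prec \xi$; to apply the induction hypothesis one must know that such a $\zeta$ is (the code of a notation for) a genuine ordinal below $\xi$. This is guaranteed by the standing assumption that $\prec$ represents the well-order $\Lambda$ and, on the standard model, that $\mathbb N \models \zeta \prec \xi$ holds exactly for the (notations of) ordinals $\zeta < \xi$; so the external induction on ordinals $\xi < \Lambda$ is legitimate. The second point is that the argument only uses soundness of $T$ in the weak form ``$T \vdash \sigma \Rightarrow \mathbb N \models \sigma$'' applied to the sentences $\Box_T \bot$ and $[\zeta]_T^\Lambda \neg\psi$; no reflexive or transfinite induction inside $T$ is needed here, since the induction is carried out entirely in the metatheory. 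I would present the proof as a short direct transfinite induction, remarking that it is really just unfolding one step of \eqref{equation:ramifiedProvability} and invoking consistency of $T$ together with the induction hypothesis.
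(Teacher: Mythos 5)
Your argument is correct and its core step is exactly the paper's: from $\mathbb N \models \la \zeta \ra_T^\Lambda \psi$ and $\mathbb N \models \Box_T(\la \zeta \ra_T^\Lambda \psi \to \bot)$ one gets $T \vdash \neg\la \zeta \ra_T^\Lambda \psi$, hence $\mathbb N \models \neg\la \zeta \ra_T^\Lambda \psi$ by soundness, a contradiction. The difference is purely in the packaging, and it is worth noticing: you frame the proof as a transfinite induction on $\xi$, but your inductive step never actually invokes the induction hypothesis --- the contradiction is obtained from soundness of $T$ alone, for an arbitrary witness $\zeta$ and $\psi$. The paper presents the same argument as a plain case distinction ($\xi = 0$ versus $\xi \succ 0$) and then explicitly remarks that no transfinite induction is used; this remark is not cosmetic, since the paper is carefully tracking which results require transfinite induction (this bookkeeping drives Section \ref{section:MunchhausenProvabilityMultipleOracle}, where the goal is to eliminate such induction from the object and meta theory). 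Dropping the induction wrapper also dissolves the first of your ``points needing care'': since the induction hypothesis is never applied to the internal witness $\zeta$, there is no need to argue that $\zeta$ denotes a genuine ordinal below $\xi$; the argument goes through for any element satisfying $\mathbb N \models \zeta \prec \xi$. I would therefore restate your proof without the induction, both to match what the argument actually uses and to preserve the (relevant) observation that soundness of the theory is the only ingredient.
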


\begin{proof}
By a simple case distinction. In case $\xi=0$, we get from a hypothetical $\mathbb N \models [0]_T \bot$ together with the soundness and the above lemma that $\mathbb N \models \Box_T \bot$ so that $T\vdash \bot$ which cannot be. 

In case $\xi \succ 0$, suppose for a contradiction that $\mathbb N\models [{\xi}]_T \bot$. Then, using soundness of $T$, we only need to consider the case that
\[
\mathbb N \models \exists \psi\, \exists \, \zeta{\prec}\xi\ \big(\la{\zeta}\ra_T \psi \wedge \Box_T(\la{\zeta}\ra_T \psi\to \bot) \big),
\]
so that for some ordinal $\zeta \prec \xi$ and some formula $\psi$ we have $\mathbb N \models \la{\zeta}\ra_T \psi$. Also $\mathbb N \models \Box_T(\la{\zeta}\ra_T \psi\to \bot)$ so that $T\vdash \la{\zeta}\ra_T \psi\to \bot$ whence by soundness of $T$ we see that $\mathbb N \models\neg \la{\zeta}\ra_T \psi$ which is a contradiction.
\end{proof}

We note that the above argument does not use transfinite induction. 

\section{On uniqueness of M\"unchhausen provability}\label{section:OnUniqueness}

The definition of 1-M\"unchhausen provability allows for various different 1-M\"unchhausen predicates to exist. Of course, it would be highly desirable that the defining equivalence \eqref{equation:ramifiedProvability} for 1-M\"unchhausen provability defined a $T$ provably unique predicate. We can prove uniqueness of the predicate via an external induction up to any level below $\omega$.

\begin{lemma}\label{theorem:UniquePredicatesTillOmega}
Let $T$ be a sound Single Oracle $\Lambda$-M\"unchhausen Theory for $\Lambda\geq \omega$, with corresponding 1-M\"unchhausen provability predicates $[\alpha]^\Lambda_T$ and $\overline{[\alpha]}^\Lambda_T$. We have for any natural number $n$ that
\[
T\vdash \forall \varphi \ \big( [n]^\Lambda_T \varphi  \ \leftrightarrow \ \overline{[n]}^\Lambda_T \varphi \big).
\]
\end{lemma}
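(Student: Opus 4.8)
The plan is to prove the equivalence $T\vdash \forall\varphi\ \big([n]^\Lambda_T\varphi \leftrightarrow \overline{[n]}^\Lambda_T\varphi\big)$ by external induction on the natural number $n$. The base case $n=0$ is immediate from Lemma~\ref{theorem:ZeroMunchhausenIsNormalProvability}, since both $[0]^\Lambda_T\varphi$ and $\overline{[0]}^\Lambda_T\varphi$ are $T$-provably equivalent to $\Box_T\varphi$, and hence to each other. For the inductive step, I would fix $n$, assume as induction hypothesis that $T\vdash\forall\varphi\,\big([m]^\Lambda_T\varphi\leftrightarrow\overline{[m]}^\Lambda_T\varphi\big)$ for every $m\leq n$ (equivalently, for every $m\prec n+1$, using that by the third displayed requirement each $m<n+1<\Lambda$ satisfies $T\vdash m\prec n+1$, and that below $\omega$ there are no other ordinals below $n+1$), and then unfold the defining equivalence for both predicates at level $n+1$.

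Concretely, I would reason inside $T$. By the defining equivalence, $[n+1]^\Lambda_T\phi$ holds iff $\Box_T\phi$ or there exist $\psi$ and $\xi\prec n+1$ with $\la\xi\ra^\Lambda_T\psi$ and $\Box_T(\la\xi\ra^\Lambda_T\psi\to\phi)$; and similarly for $\overline{[n+1]}^\Lambda_T\phi$ with $\overline{\la\xi\ra}^\Lambda_T$ in place of $\la\xi\ra^\Lambda_T$. The key point is that any $\xi$ witnessing the existential in the first disjunction satisfies $\xi\prec n+1$; since we are below $\omega$, such a $\xi$ is (a representation of) some natural number $m\leq n$, so the induction hypothesis applies and gives $T\vdash\forall\psi\,(\la\xi\ra^\Lambda_T\psi\leftrightarrow\overline{\la\xi\ra}^\Lambda_T\psi)$ — note this equivalence for the \emph{diamonds} follows from the equivalence for the \emph{boxes} at level $m$ by contraposition and substituting $\neg\psi$ for $\varphi$. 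Since the induction hypothesis is a theorem of $T$, we also have $\Box_T\forall\psi\,(\la\xi\ra^\Lambda_T\psi\leftrightarrow\overline{\la\xi\ra}^\Lambda_T\psi)$ available inside $T$ by provable $\Sigma_1$-completeness / necessitation, which is exactly what is needed to match the $\Box_T(\la\xi\ra^\Lambda_T\psi\to\phi)$ conjunct with its barred counterpart. Assembling these, the two unfolded disjunctions are provably equivalent in $T$, hence $T\vdash\forall\varphi\,\big([n+1]^\Lambda_T\varphi\leftrightarrow\overline{[n+1]}^\Lambda_T\varphi\big)$, completing the induction.

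I expect the main subtlety — rather than a genuine obstacle — to be the careful bookkeeping about what the quantifier $\exists\,\xi\prec n+1$ ranges over: one must use that there are only finitely many ordinals below $n+1$, that each is $T$-provably equal to one of the representations $0,1,\dots,n$, and that $T$ proves $\forall\xi\,(\xi\prec n+1\to\bigvee_{m\leq n}\xi = m)$, so that the internal existential can be replaced by a finite disjunction to which the (finitely many instances of the) induction hypothesis can be applied uniformly. This is also precisely why the argument cannot be pushed past $\omega$: at a limit level there are infinitely many predecessors and no corresponding finite disjunction, so one would need the equivalence to hold \emph{provably uniformly in $\xi$}, which the recursion alone does not deliver — this is the phenomenon flagged in Section~\ref{section:OnUniqueness} and addressed there by assuming the object and meta theory coincide. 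Soundness of $T$ is used only lightly here (e.g. to know the predicates behave as intended and, if desired, via Proposition~\ref{theorem:MHconsistencyStatementsAreTrue}), and indeed the core of the argument is purely a $T$-internal manipulation of the two defining equivalences.
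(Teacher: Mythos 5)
Your proof is correct and follows essentially the same route as the paper's: external induction on $n$, base case via Lemma \ref{theorem:ZeroMunchhausenIsNormalProvability}, and in the inductive step unfolding the defining recursion, using that every $\xi\prec n+1$ is $T$-provably one of the finitely many representations $0,\dots,n$ so the external induction hypothesis applies, and then necessitating the induction hypothesis to handle the $\Box_T(\la\xi\ra\psi\to\phi)$ conjunct. Your closing remarks about why the argument stops at $\omega$ and about the role of necessitation match the paper's own discussion.
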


\begin{proof}
We proceed by an external induction where the base case follow directly from Lemma \ref{theorem:ZeroMunchhausenIsNormalProvability}. We shall omit super and sub indices. 

For the inductive step, we reason in $T$, fix some formula $\varphi$, fix the $(n+1)$th element in the $\prec$ ordering and assume $[n+1]\varphi$. In the non-trivial case, there is some formula $\psi$ and an element $\tilde m{\prec}n+1$ so that $\la \tilde m \ra \psi$ and $\Box (\la \tilde m \ra \psi \to \varphi)$. Here we end our reasoning inside $T$.  Since we can prove that any element $\prec$-below the externally given $n+1$ is either the zero-th, or the first, or \ldots or, the $n$-th element, we know that $\tilde m$ corresponds to some natural number $m<n+1$. Thus, we can appeal to the external induction hypothesis that tells us that 
\begin{equation}\label{equation:theorem:UniquePredicatesTillOmega:IH}
T\vdash \forall \psi \ ([m] \psi \leftrightarrow \overline{[m]}\psi) 
\end{equation}
and consequently
\begin{equation}\label{equation:theorem:UniquePredicatesTillOmega:IHBoxed}
T\vdash \Box \forall \psi \ ([m] \psi \leftrightarrow \overline{[m]}\psi).
\end{equation}
These two ingredients are sufficient to conclude $\overline{[m]} \psi$. Of course the other direction goes exactly the same.
\end{proof}

Let us make some observations about this simple proof. First, we observe that we could only conclude \eqref{equation:theorem:UniquePredicatesTillOmega:IHBoxed} from \eqref{equation:theorem:UniquePredicatesTillOmega:IH} by necessitation since the meta-theory as in $T\vdash \ldots$ is the same as the object-theory as in $\Box_T$. Second, we observe that we only had access to the inductive hypothesis since we can express in the language of first order logic that being smaller than the $(n+1)$th element implies being equal to one of the zero-th, or \ldots, or the $n$th element. Of course, we cannot generalize this to the first limit ordinal and hence our external induction cannot be extended to the transfinite. 

If we wish to generalize our argument to the transfinite, we should replace our external induction by an internal one. Of course, then in our meta-theory, we should have access to transfinite induction. However, we only see how to continue the proof in the case where the object theory equals the meta-theory and consequently also has the same amount of transfinite induciton.

\begin{lemma}\label{theorem:UniqueExtensionalProgressions}
Let $T$ be a theory that proves the recursion from \eqref{equation:ramifiedProvability} for two predicates $[\zeta]_U$ and $\overline{[\zeta]}_U$. We further suppose that $T$ proves the basic facts about the ordering $\la \Lambda,\prec \ra$. Also, we assume that $T$ proves transfinite $\Pi_1([\alpha], \overline{[\alpha]})$ induction.

If $T$ and $U$ are $T$-provably equivalent, then we have that $[\zeta]_U$ and $\overline{[\zeta]}_U$ are $T$-provably equivalent predicates. 
\end{lemma}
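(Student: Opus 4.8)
The plan is to mimic the finite-level argument of Lemma \ref{theorem:UniquePredicatesTillOmega}, but to replace the external induction on $n$ by an internal transfinite induction carried out inside $T$, using the hypothesis that $T$ proves transfinite $\Pi_1([\alpha],\overline{[\alpha]})$ induction together with the hypothesis that $T$ and $U$ are $T$-provably equivalent (so that the Necessitation step of the finite proof becomes available internally). Concretely, I would set $\varphi(\zeta) := \forall \phi\,\big([\zeta]_U\phi \leftrightarrow \overline{[\zeta]}_U\phi\big)$ and aim to prove $T\vdash \forall\zeta\!\prec\!\Lambda\,\varphi(\zeta)$.

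The key observation is that $\varphi(\zeta)$ is (provably equivalent in $T$ to) a formula amenable to \emph{transfinite reflexive induction} in the sense of Lemma \ref{theorem:TransfiniteReflexiveInduction}: unwinding the recursion \eqref{equation:ramifiedProvability} for both $[\zeta]_U$ and $\overline{[\zeta]}_U$, the only non-trivial direction requires, from $[\zeta]_U\phi$, i.e.\ from some $\xi\prec\zeta$ and $\psi$ with $\la\xi\ra_U\psi$ and $\Box_U(\la\xi\ra_U\psi\to\phi)$, to conclude $\overline{[\zeta]}_U\phi$. For that it suffices to know $\forall\psi\,(\la\xi\ra_U\psi\leftrightarrow\overline{\la\xi\ra}_U\psi)$ — which is $\varphi(\xi)$ for $\xi\prec\zeta$, the ordinary-induction hypothesis — \emph{and} its boxed version $\Box_U\,\varphi(\xi)$, so that $\Box_U(\la\xi\ra_U\psi\to\phi)$ may be converted to $\Box_U(\overline{\la\xi\ra}_U\psi\to\phi)$. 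The boxed hypothesis is exactly what the reflexive-induction machinery supplies: in Lemma \ref{theorem:TransfiniteReflexiveInduction} one is allowed to assume $\Box_T\big(\forall\beta\!\prec\!\dot\alpha\,\varphi(\beta)\big)$, and since $T$ and $U$ are $T$-provably equivalent, $\Box_T$ and $\Box_U$ coincide provably, giving $\Box_U\big(\forall\beta\!\prec\!\dot\alpha\,\varphi(\beta)\big)$ and hence $\Box_U\varphi(\xi)$ for each $\xi\prec\zeta$. Thus I would verify the premise of Lemma \ref{theorem:TransfiniteReflexiveInduction}, namely
\[
T\vdash\forall\zeta\Big(\forall\xi\!\prec\!\zeta\,\varphi(\xi)\ \wedge\ \Box_T\big(\forall\xi\!\prec\!\dot\zeta\,\varphi(\xi)\big)\ \to\ \varphi(\zeta)\Big),
\]
and then invoke that lemma — noting that its applicability is guaranteed precisely because $T$ proves transfinite induction for formulas of the shape $\Box_T\chi\to\varphi$, which is subsumed by the assumed transfinite $\Pi_1([\alpha],\overline{[\alpha]})$ induction since $\Box_T\chi$ and $\varphi$ are both $\Pi_1$ (indeed $\Sigma_1$, resp.\ of the stated relativized class) over the predicates in question.

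The step I expect to be the main obstacle is the bookkeeping that turns $\Box_U(\la\xi\ra_U\psi\to\phi)$ into $\Box_U(\overline{\la\xi\ra}_U\psi\to\phi)$ uniformly in $\xi\prec\zeta$: one must be careful that the quantifier over $\xi$ is internal, that the equivalence $\la\xi\ra_U\psi\leftrightarrow\overline{\la\xi\ra}_U\psi$ used under the box is the dotted/internalised version of $\varphi(\xi)$ with $\xi$ bounded by the internally given $\zeta$, and that the representation facts about $\la\Lambda,\prec\ra$ assumed in the hypotheses (transitivity, right-discreteness, minimal element, and the provable decidability of $\xi\prec\zeta$) are enough to carry the argument without appeal to anything about $\prec$ beyond what Lemma \ref{theorem:TransfiniteReflexiveInduction} needs. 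Once the premise is in hand the conclusion $T\vdash\forall\zeta\!\prec\!\Lambda\,\varphi(\zeta)$ is immediate, and symmetry between $[\zeta]_U$ and $\overline{[\zeta]}_U$ makes the two implications in $\varphi(\zeta)$ interchangeable, so no extra work is needed there.
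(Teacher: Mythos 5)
Your proposal is correct and follows essentially the same route as the paper: the paper's proof likewise observes that a plain transfinite induction stalls because the equivalence $\la\xi\ra_U\psi\leftrightarrow\overline{\la\xi\ra}_U\psi$ supplied by the ordinary inductive hypothesis is not available \emph{under} $\Box_U$, and then resolves this exactly as you do, by invoking transfinite reflexive induction (Lemma \ref{theorem:TransfiniteReflexiveInduction}) so that the boxed inductive hypothesis $\Box_U\big(\forall\,\xi{\prec}\dot\zeta\,\varphi(\xi)\big)$ becomes available, which is precisely where the $T$-provable equivalence of $T$ and $U$ is needed.
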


\begin{proof}
We have chosen a formulation where $T$ and $U$ are different from the outset so that we clearly see at what point we need to assume that $T$ is $T$-provably equivalent to $U$. 

Thus, we reason in $T$ and will as a first attempt prove by transfinite $\Pi_1([\alpha], \overline{[\alpha]})$ induction that 
\[
\forall \zeta \,\forall  \varphi \ ([\zeta]_U\varphi \ \leftrightarrow \ \overline{[\zeta]}_U \varphi).
\]

For $\zeta=0$ the equivalence is obvious. Thus, we fix some $\zeta \succ 0$ and focus on one implication the other being analogous. Thus, we assume that $[\zeta]_U \varphi$ and set out to prove $\overline{[\zeta]}_U \varphi$. 

From the assumption $[\zeta]_U \varphi$ we find --in the non-trivial case-- some formula $\psi$ and ordinal $\xi\prec\zeta$ so that $\la \xi \ra_U \psi$ and $\Box_U (\la \xi \ra_U  \psi \to \varphi)$. The inductive hypothesis now will tell us that $\la \xi \ra_U \psi \leftrightarrow \overline{\la \xi \ra}_U \psi$. 

However, there is no way that we know that this equivalence is \emph{provable}, that is, that we have $\Box_U \Big( \la \xi \ra_U \psi \leftrightarrow \overline{\la \xi \ra}_U \psi\Big)$. The latter would be needed to conclude $\Box_U (\overline{\la \xi \ra}_U  \psi \to \varphi)$ so that $\overline{[\zeta]}_U \varphi$.

The problem cannot be solved by strengthening the induction to for example
\[
\forall \phi\  \big [\  ([\zeta]_U \phi \leftrightarrow \overline{[\zeta]}_U\phi) \ \wedge\ \Box_U ([\dot\zeta]_U \phi \leftrightarrow \overline{[\dot \zeta]}_U\phi)   \  \big ]
\]
since then the problem will simply come back but now under a box.

However, when $T = U$ we have access to transfinite reflexive induction as formulated in Lemma \ref{theorem:TransfiniteReflexiveInduction}. That is, in order to show that $\forall  \varphi \ ([\zeta]_U\varphi \ \leftrightarrow \ \overline{[\zeta]}_U \varphi)$ for a particular $\zeta$ we may assume both $\forall \, \xi{\prec}\zeta\, \forall  \varphi \ ([\zeta]_U\varphi \ \leftrightarrow \ \overline{[\zeta]}_U \varphi)$ and also $\Box_U \big (\forall \, \xi{\prec}\dot \zeta\, \forall  \varphi \ ([\zeta]_U\varphi \ \leftrightarrow \ \overline{[\zeta]}_U \varphi) \big)$ which makes that the proof now goes through easily.

\end{proof}

This lemma tells us that solutions to the recursion equivalence \eqref{equation:ramifiedProvability} need not be provably unique if the object theory $U$ is different from the meta theory $T$ or in case we do not have the sufficient amount of transfinite induction available. Not having provably unique fixpoints need not necessarily be a big problem and similar phenomena occur with for example Rosser fixpoints. 

However, as we shall see in Section \ref{section:munchhausenSound}, we also need the object theory to be equal to the meta theory if we wish to prove the soundness of $\glp_\Lambda$ with respect to the $[\zeta]^\Lambda_U$ predicates. In particular, the arithmetical soundness of the Necessitation rule requires the object and meta theory to be equal.

In case the object theory is not equal to the meta-theory, we can only prove a weak form of uniqueness as expressed in the following lemma.

\begin{lemma}\label{theorem:weakUniqueExtensionalProgressions}
Let $T$ be a theory that proves the recursion expressed in equation \eqref{equation:ramifiedProvability} for two predicates $[\zeta]_U$ and $\overline{[\zeta]}_{V}$ with $V$ possibly different from $U$. We further suppose that $T$ proves the basic facts about the ordering $\la \Lambda,\prec \ra$. Also, we assume that $T$ proves transfinite $\Pi_1([\alpha]_U, \overline{[\alpha]_V})$ induction.
In case $T$ proves the arithmetical soundness of $\glp_\Lambda$ for both predicates $[\zeta]_U$ and $\overline{[\zeta]}_{V}$, then (omitting subscripts)
\[
T \vdash \forall \, \alpha {\prec}\Lambda \Big (  \ \big( \la \alpha \ra \top  \leftrightarrow  \overline {\la \alpha\ra} \top  \big) \ \longrightarrow \forall \varphi\, \exists \psi \ ([\alpha]\varphi \leftrightarrow \overline{[\alpha]} \psi)\Big).
\]
\end{lemma}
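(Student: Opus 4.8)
The plan is to prove the displayed implication \emph{directly}, reasoning inside $T$, by a one-line definition by cases; somewhat surprisingly the conclusion is weak enough that neither the transfinite induction hypothesis nor the full arithmetical soundness of $\glp_\Lambda$ is really needed, and the generalised FGH theorem is not needed either. So: reason in $T$, fix $\alpha\prec\Lambda$, assume $\la\alpha\ra\top\leftrightarrow\overline{\la\alpha\ra}\top$, and fix an arbitrary formula $\varphi$. By the law of excluded middle we may split on whether $[\alpha]\varphi$ holds, and we pick the witness accordingly: set $\psi:=\top$ if $[\alpha]\varphi$ holds and $\psi:=\bot$ otherwise (this defines, provably in $T$, a single G\"odel number $\psi$ depending on the parameters $\alpha,\varphi$, since $T\vdash[\alpha]\varphi\vee\neg[\alpha]\varphi$). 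It remains to check $[\alpha]\varphi\leftrightarrow\overline{[\alpha]}\psi$. For the forward direction, if $[\alpha]\varphi$ then $\psi=\top$, and since $T$ proves $\Box_V\top$ (a standard property of the provability predicate, implicit in $T$'s ability to formalise the recursion for $\overline{[\alpha]}$) the defining equivalence \eqref{equation:ramifiedProvability} for $\overline{[\alpha]}$ gives $\overline{[\alpha]}\top$, i.e.\ $\overline{[\alpha]}\psi$.

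For the converse I argue contrapositively. Suppose $\neg[\alpha]\varphi$; then $\psi=\bot$. The one auxiliary fact needed is that $T\vdash [\alpha]\bot\to[\alpha]\varphi$: unfolding \eqref{equation:ramifiedProvability}, from $[\alpha]\bot$ we get either $\Box_U\bot$, hence $\Box_U\varphi$ (by standard reasoning about $\Box_U$ available in $T$) and hence $[\alpha]\varphi$; or some $\xi\prec\alpha$ and $\chi$ with $\la\xi\ra\chi$ and $\Box_U(\la\xi\ra\chi\to\bot)$, hence $\Box_U(\la\xi\ra\chi\to\varphi)$ and again $[\alpha]\varphi$ by the recursion; (equivalently this is just the $\glp_\Lambda$-theorem $[\alpha]\bot\to[\alpha]\varphi$). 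Therefore $\neg[\alpha]\varphi$ entails $\neg[\alpha]\bot$, that is $\la\alpha\ra\top$, so by the standing assumption $\overline{\la\alpha\ra}\top$, which by definition is $\neg\overline{[\alpha]}\bot$, i.e.\ (since $\psi=\bot$) $\neg\overline{[\alpha]}\psi$. This establishes both directions, so $\exists\psi\,([\alpha]\varphi\leftrightarrow\overline{[\alpha]}\psi)$, and as $\alpha$ and $\varphi$ were arbitrary the lemma follows.

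There is essentially no hard part in this route; the only points requiring a little care are that the case definition of $\psi$ is legitimate (it is, by classical logic in $T$) and that $T$ genuinely proves $\overline{[\alpha]}\top$ (it does, since $T\vdash\Box_V\top$). It is worth noting explicitly that this argument uses neither transfinite induction in $T$ nor the soundness of $\glp_\Lambda$ for $\overline{[\cdot]}$; those hypotheses are presumably retained here for uniformity with the neighbouring uniqueness results, or with a view to a more ``structural'' witness $\psi$ built by recursion on $\alpha$ from the oracle decomposition of $[\alpha]\varphi$. If one insisted on such a structural proof, the main obstacle would be exactly the one already flagged in the proof of Lemma \ref{theorem:UniqueExtensionalProgressions}: translating an oracle $\la\xi\ra\chi$ occurring in the unfolding of $[\alpha]\varphi$ into a $\overline{\la\xi\ra}$-oracle needs not merely the equivalence of the two consistency notions at level $\xi$ but its $\Box_U$-provability, which is genuinely unavailable when $U\ne V$ — and this is precisely why one settles for the weaker ``$\exists\psi$'' formulation proved above.
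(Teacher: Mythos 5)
Your proof is correct for the statement as literally written, but it is a genuinely different and far more elementary route than the paper's. The paper argues by transfinite induction on $\alpha$: the case $\neg[\alpha]\varphi$ is handled exactly as you do (take $\psi=\bot$, using equi-consistency at level $\alpha$); but when $[\alpha]\varphi$ holds the paper does not settle for $\psi=\top$. It unfolds the recursion \eqref{equation:ramifiedProvability}, uses the induction hypothesis to replace the oracle $\la\beta\ra\chi$ by some $\overline{\la\beta\ra}\chi'$, derives $\Diamond_V\,\overline{\la\beta\ra}\chi'$ from $\overline{\la\alpha\ra}\top$ via the assumed soundness of $\glp_\Lambda$ (Lemma \ref{theorem:basicGLPlemmas}.\ref{item:bigConsEquivSmallCons:theorem:basicGLPlemmas}), and then applies the FGH theorem over the theory $V+\overline{\la\beta\ra}\chi'$ to the $\Sigma^0_1$ sentence $\Box_U(\la\beta\ra\chi\to\varphi)$, obtaining a $\psi$ with $\Box_V(\overline{\la\beta\ra}\chi'\to\psi)$. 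This is why the transfinite induction, the soundness hypothesis and FGH all appear in the statement: they are exactly what the structural construction consumes. Note in particular that the paper sidesteps the obstacle you flag at the end (needing $\Box_U$ of the level-$\xi$ equivalence) not by boxing the inductive hypothesis but by re-encoding the entire $\Sigma^0_1$ fact $\Box_U(\la\beta\ra\chi\to\varphi)$ as a single $V$-provability statement via FGH --- a trick worth internalising.

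What each approach buys: your excluded-middle witness shows that the lemma, taken at face value, follows from equi-consistency at level $\alpha$ alone --- a fair observation, and one that exposes the formal statement as weaker than the ``weak uniqueness'' it is meant to express. The paper's witness, by contrast, tracks \emph{why} $[\alpha]\varphi$ holds: $\overline{[\alpha]}\psi$ is established from a matching oracle decomposition, which is what one would need for any strengthening of the lemma (a uniform map $\varphi\mapsto\psi$, or a version with the biconditional under a box), where the trivial witness could not survive. So: no gap in your argument, but be aware that it proves a weaker reading of the lemma than the proof in the paper is actually engineered to support.
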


\begin{proof}
We reason in $T$ and proceed by a transfinite induction on $\alpha$. Thus, we assume the equi-consistency of both theories, fix some formula $\varphi$ and assume $[\alpha] \varphi$.  The case where $\overline{[\alpha]}\bot$ is trivial, so we assume $\overline{\la \alpha\ra} \top$ whence also ${\la \alpha\ra} \top$. Thus, in case $\neg [\alpha]\varphi$ we may by consistency use $\psi = \bot$.

In case that $[\alpha]\varphi$ in virtue of $\Box_U \varphi$, we are done by the FGH theorem (Theorem \ref{theorem:FGH}) for the theory $V$ since $\Box_U \varphi \in \Sigma^0_1$. In the other case, there are $\beta \prec \alpha$ and $\chi$ so that $\la \beta\ra \chi$ and $\Box_U (\la \beta\ra \chi \to \varphi)$. By the IH we find some $\chi'$ so that $\overline{\la \beta\ra} \chi' \ \leftrightarrow \ \la \beta\ra \chi$. Since we work under the assumption of $\overline {\la \alpha \ra}\top$ and since the provability predicates are sound for $\glp_\Lambda$ we also have $\overline {\la \alpha \ra} \,\overline{\la \beta\ra} \chi'$ by Lemma \ref{theorem:basicGLPlemmas}.\ref{item:bigConsEquivSmallCons:theorem:basicGLPlemmas} and, in particular $\Diamond_V \overline{\la \beta\ra} \chi'$. Since we now know the consistency of the theory $V + \overline{\la \beta\ra} \chi'$ we may apply the FGH theorem to obtain a $\psi$ with 
\[
\Box_U (\la \beta\ra \chi \to \varphi) \ \leftrightarrow \ \Box_{V + \overline{\la \beta\ra} \chi'} \psi.
\]
By the formalised deduction theorem we may conclude $\Box_{V} \big( \overline{\la \beta\ra} \chi' \to  \psi \big)$ whence $[\alpha]\varphi \leftrightarrow [\alpha]\psi$.
\end{proof}

In this section we have shown that in general we cannot prove that 1-M\"unchhausen provability predicates are uniquely defined by the recursion in \eqref{equation:ramifiedProvability}. Only in the finite ordinals can we prove uniqueness. This allows us to relate the provability notions from this paper to similar ones from the literature. The most prominent example is given by the predicate
\[
[n]^{\sf True}_T \varphi \ \ \ \mbox{ which stands for } \ \ \ \ \exists \, \pi {\in} \Pi^0_1\ \Big({\sf True}_{\Pi^0_1}(\pi) \wedge \Box_T\big( \pi \to \varphi \big)\Big ).
\] 
Furthermore, in \cite{Joosten:2015:TuringJumpsThroughProvability} a reading is given where the modal operators $[n]\varphi$ are interpreted as follows.

\begin{equation}\label{equation:boxBoxDefinition}
\begin{split}
\boxBox 0_T\phi \ \ \ &:= \Box_T \phi, \ \ \mbox{ and }\\
\boxBox{n+1}_T \phi  \ \ \        &:= \ \Box_T \phi \ \vee \ \exists \, \psi \  \bigvee_{0\leq m \leq n} \Big(\boxDiamond{m}_T \psi \ \wedge \ \Box (\boxDiamond{m}_T \psi \to \phi)\Big).
\end{split}
\end{equation}

Soundness for this interpretation in \pa was proven and a strong relation was given to the truth provability predicates $[n]^{\sf True}_T$. 
The next Lemma is a strengthening on the one hand since we weaken the base theory to \ea and a weakening on the other hand since we only consider two modalities. 

\begin{lemma}\label{theorem:OldPaperIsGood}
Let $T$ be a theory that contains \ea.
We have that
\begin{enumerate}
\item
$\ea \vdash \forall \varphi \ (\boxBox 1_T \varphi \leftrightarrow [1]^{\sf True}_T \varphi)$;

\item
$\glp_2$ is sound for $T$ when interpreting $[0]$ as $\Box_T$ and $[1]$ as $\boxBox 1_T$;
 
\item In case that moreover $T$ proves the $\Sigma^0_1$-collection principle we have\\
$T \vdash \forall \, \varphi\, \forall \psi\, \exists \chi \ \big( \boxBox{1}_T \varphi \vee \boxBox{1}_T \psi \leftrightarrow \boxBox{1}_T \chi\big)$.
\end{enumerate}
\end{lemma}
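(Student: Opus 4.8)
My plan is to prove the three items in the stated order, with item~1 carrying the weight. The key observation is that the oracle sentences admitted by $[1]^{\sf True}_T$ --- arbitrary \emph{true} $\Pi^0_1$ sentences --- form a class closed under conjunction, whereas the oracles occurring in $\boxBox 1_T$ are a priori only of the restricted shape $\Diamond_T\psi$. Item~1 says the two notions nevertheless coincide over \ea, after which I can argue items~2 and~3 with $[1]^{\sf True}_T$ (abbreviated $\blacksquare$ below) in place of $\boxBox 1_T$.

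For item~1 I would reason inside \ea. The implication $\boxBox 1_T\varphi\to\blacksquare\varphi$ goes by cases on the definition of $\boxBox 1_T$: if $\Box_T\varphi$ then the true $\Pi^0_1$ sentence $\top$ serves as oracle, and if $\Diamond_T\psi\wedge\Box_T(\Diamond_T\psi\to\varphi)$ then $\Diamond_T\psi$ is itself a $\Pi^0_1$ sentence which, being true by hypothesis, serves as oracle. For the converse, fix a true $\Pi^0_1$ sentence $\pi$ with $\Box_T(\pi\to\varphi)$; if $\Box_T\bot$ we are done, and otherwise, under $\Diamond_T\top$, I apply the internal (\ea-provable) form of the FGH theorem to the $\Sigma^0_1$ formula $\neg\pi$ to obtain an elementarily computed $\psi$ with $\ea\vdash\Diamond_T\top\to(\pi\leftrightarrow\Diamond_T\psi)$; since $\ea\vdash\Diamond_T\psi\to\Diamond_T\top$ as well (as $\Box_T\bot$ entails $\Box_T\neg\psi$), we get $T\vdash\Diamond_T\psi\to\pi$, hence $\Box_T(\Diamond_T\psi\to\varphi)$, and the truth of $\pi$ forces the truth of $\Diamond_T\psi$, so $\boxBox 1_T\varphi$. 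The only non-routine ingredient here is the internalisation of FGH with elementary dependence, which is exactly the refinement recalled just after Theorem~\ref{theorem:FGH} (modulo the familiar caveat about exponentiation in the oracle formulas).

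For item~2 I would show that \ea --- a fortiori $T$ --- proves every arithmetical realisation of a $\glp_2$-theorem under $[0]\mapsto\Box_T$, $[1]\mapsto\boxBox 1_T$, equivalently (by item~1) under $[1]\mapsto\blacksquare$. Tautologies are trivial; both Necessitation rules hold since $\ea\vdash B$ gives $\ea\vdash\Box_T B$ (verify the proof, using $\ea\subseteq T$) and $\ea\vdash\Box_T B\to\blacksquare B$. The $[0]$-axioms are provable $\gl$ for $\Box_T$, already available in \ea (provable Löb included). Negative introspection $\langle 1\rangle\varphi\to\langle 0\rangle\varphi$ and Monotonicity $\langle 0\rangle\varphi\to[1]\langle 0\rangle\varphi$ unwind directly from the definition: the former is the contrapositive of $\Box_T\chi\to\boxBox 1_T\chi$, the latter holds because $\Diamond_T\varphi\wedge\Box_T(\Diamond_T\varphi\to\Diamond_T\varphi)$ witnesses $\boxBox 1_T\Diamond_T\varphi$. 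Transitivity $\blacksquare\varphi\to\blacksquare\blacksquare\varphi$ holds because a true $\Pi^0_1$ oracle $\pi$ with $\Box_T(\pi\to\varphi)$ also satisfies $\Box_T(\pi\to\blacksquare\varphi)$, as $T+\pi$ proves ${\sf True}_{\Pi^0_1}(\pi)\wedge\Box_T(\pi\to\varphi)$. Distributivity for $\blacksquare$ is where conjunction-closure of the oracle class is used: oracles $\pi_1,\pi_2$ for $\varphi\to\psi$ and $\varphi$ combine to the true $\Pi^0_1$ oracle $\pi_1\wedge\pi_2$ for $\psi$. Finally, Löb for $\blacksquare$ --- the only genuinely non-formal axiom, and the main obstacle inside item~2 --- follows from the three properties just listed by the standard Gödel--Löb diagonalisation: pick $\theta$ with $\ea\vdash\theta\leftrightarrow(\blacksquare\theta\to\varphi)$; necessitation and distributivity give $\ea\vdash\blacksquare\theta\to(\blacksquare\blacksquare\theta\to\blacksquare\varphi)$, transitivity turns this into $\ea\vdash\blacksquare\theta\to\blacksquare\varphi$, and unwinding the fixed point yields $\ea\vdash(\blacksquare\varphi\to\varphi)\to\theta$, so that necessitation and distributivity once more combine to $\ea\vdash\blacksquare(\blacksquare\varphi\to\varphi)\to\blacksquare\varphi$.

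For item~3 the content is genuinely arithmetical --- $[1](A\vee B)\to[1]A\vee[1]B$ is not $\glp$-valid, so soundness alone cannot give it --- and I would obtain it from the generalisation of the FGH theorem in \cite{Joosten:2015:TuringJumpsThroughProvability,Joosten:2019:TransfiniteTuringjumps}, to the effect that the consistency notion $\boxDiamond 1_T\theta:=\neg\boxBox 1_T\neg\theta$ is provably $\Pi^0_2$-complete. Given $\varphi,\psi$, the formula $\neg\boxBox 1_T\varphi\wedge\neg\boxBox 1_T\psi$ is $\Pi^0_2$, so that generalised FGH yields a $\theta$ with $T\vdash(\neg\boxBox 1_T\varphi\wedge\neg\boxBox 1_T\psi)\leftrightarrow\boxDiamond 1_T\theta$; negating and setting $\chi:=\neg\theta$ gives $T\vdash(\boxBox 1_T\varphi\vee\boxBox 1_T\psi)\leftrightarrow\boxBox 1_T\chi$. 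The point of the hypothesis ``$T$ proves $\Sigma^0_1$-collection'' is precisely that the proof of this generalised FGH theorem formalises in $T$ only once $\Sigma^0_1$-collection is on board --- it is what is needed to bound the witnesses $\psi$ ranged over by the existential quantifier inside $\boxBox 1_T$. I expect the real work in item~3 to be isolating this correct statement and pinning down where collection is consumed, rather than the surrounding bookkeeping.
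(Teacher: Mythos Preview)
Your proposal is correct and, for items~1 and~3, follows the paper's line closely: item~1 is reduced to the (internalised) FGH theorem under the case split on $\Box_T\bot$ versus $\Diamond_T\top$, just as the paper does, and item~3 is deferred to the generalised FGH results of \cite{Joosten:2015:TuringJumpsThroughProvability,Joosten:2019:TransfiniteTuringjumps}, with your added gloss on where $\Sigma^0_1$-collection enters being a reasonable elaboration of what the paper leaves to the citations.

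For item~2 you take a more self-contained route than the paper. The paper simply observes that soundness of $\glp_2$ for the truth-based predicates $[n]^{\sf True}_T$ is already established in the literature (citing \cite{Beklemishev:2005:Survey}) and then transfers it to $\boxBox{n}_T$ via item~1; you instead verify each $\glp_2$ axiom and rule directly for $\blacksquare=[1]^{\sf True}_T$, including a full fixed-point derivation of L\"ob. Both approaches are valid: yours makes explicit that nothing beyond \ea and the standard $\Pi^0_1$-truth biconditionals is required, while the paper's is shorter. Note that your L\"ob argument is exactly the specialisation of Lemma~\ref{theorem:noLoebNeeded} to $\blacksquare$, so once you have $\Box_T\phi\to\blacksquare\phi$, distributivity, and transitivity in hand you could equally well invoke that lemma rather than rerunning the diagonalisation.
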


\begin{proof}
It is easy to prove inside \ea that $\boxBox 0 \varphi \leftrightarrow \Box \varphi$ (we omit the subscripts). Likewise, $\boxBox 0 \varphi \to \boxBox 1 \varphi$ and $\boxBox 1 \bot \to \boxBox 1 \varphi$ are easy to prove. With these ingredients the first item easily follows: one direction is obvious since any oracle sentence of the form $\Diamond \psi$ is in $\Pi^0_1$. The other direction is immediate in case $ \boxBox 0 \bot$ and in the case $\boxDiamond 1 \top$ it follows from the FGH theorem since under the consistency assumption, any $\Pi^0_1$ formula is equivalent and provably so to a formula of the form $\Diamond \psi$.

The second item follows from the first since the statement holds for the $[n]^{\sf True}_T$ provability predicates (see e.g. \cite{Beklemishev:2005:Survey}).
 
The third item is implicit in \cite{Joosten:2015:TuringJumpsThroughProvability} and explicitly stated and proven in \cite{Joosten:2019:TransfiniteTuringjumps} for the $[1]^{\sf True}_T$ predicate which suffices by the first item of this lemma.
\end{proof}

Via an easy external induction we can prove that \eqref{equation:boxBoxDefinition} and \eqref{equation:ramifiedProvability} define provably equivalent predices for all natural numbers. That is to say, if $T$ is a 1-M\"unchhausen theory, then for each natural number $n$ we have that 
\begin{equation}\label{equation:OldPaperIsSpecialCaseOfCurrentPaper}
T\vdash \forall \varphi \ \big( \boxBox{n}_T \varphi \leftrightarrow [n]_T^\Lambda \varphi \big)
\end{equation}
for any 1-M\"unchhausen provability predicate $[\alpha]_T^\Lambda$. Moreover, in Lemma \ref{theorem:UniquePredicatesTillOmega} we know that any 1-M\"unchhausen provability predicate $[\alpha]_T^\Lambda$ will be uniquely defined up to $\omega$. For later in the paper, we formulate the following corollary:

\begin{corollary}\label{theorem:FirstTwoPredicatesAreGood}
Let $T$ be a $\Lambda$-1-M\"unchhausen theory with $\Lambda>2$ and corresponding 1-M\"unchhausen provability predicate $[\alpha]_T^\Lambda$. Moreover, let $T$ contain ${\sf B}\Sigma_1^0$.
\begin{enumerate}
\item
$\glp_2$ is sound for $T$ when interpreting $[0]$ as $[0]_T^\Lambda$ and $[1]$ as $[1]^\Lambda_T$;
 
\item
$T \vdash \forall \, \varphi\, \forall \psi\, \exists \chi \ \big( [1]^\Lambda_T \varphi \vee [1]^\Lambda_T \psi \leftrightarrow [1]^\Lambda_T \chi\big)$.
\end{enumerate}
\end{corollary}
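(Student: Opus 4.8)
The plan is to deduce both items from Lemma~\ref{theorem:OldPaperIsGood} together with the equivalence \eqref{equation:OldPaperIsSpecialCaseOfCurrentPaper}, which has already been recorded for every natural number. Since $\Lambda>2$, the modalities $[0]_T^\Lambda$ and $[1]_T^\Lambda$ are genuinely defined, and the instances $n=0,1$ of \eqref{equation:OldPaperIsSpecialCaseOfCurrentPaper} give $T\vdash\forall\varphi\,\big([0]_T^\Lambda\varphi\leftrightarrow\boxBox 0_T\varphi\big)$ and $T\vdash\forall\varphi\,\big([1]_T^\Lambda\varphi\leftrightarrow\boxBox 1_T\varphi\big)$. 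Spelled out, the instance $n=1$ comes from unfolding the defining recursion \eqref{equation:ramifiedProvability} at $\zeta=1$: as $T$ proves that the only $\xi\prec 1$ is $0$ --- the same elementary book-keeping about $\prec$ used in the proof of Lemma~\ref{theorem:UniquePredicatesTillOmega} --- the inner existential over $\xi$ collapses, and using Lemma~\ref{theorem:ZeroMunchhausenIsNormalProvability} under Necessitation to rewrite $\la 0\ra_T^\Lambda$ as $\boxDiamond 0_T$ underneath the inner box reproduces exactly the right-hand side of \eqref{equation:boxBoxDefinition} for $n+1=1$.

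For item~1 I would argue exactly as in the proof of Lemma~\ref{theorem:OldPaperIsGood}(2): arithmetical soundness of $\glp_2$ is preserved when the interpretation of a modality is replaced by a $T$-provably equivalent predicate. Concretely, write $(\cdot)^{*}$ for the interpretation sending $[0]\mapsto\Box_T$ and $[1]\mapsto\boxBox 1_T$ (sound by Lemma~\ref{theorem:OldPaperIsGood}(2)), and $(\cdot)^{\star}$ for the one sending $[0]\mapsto[0]_T^\Lambda$ and $[1]\mapsto[1]_T^\Lambda$, with the propositional variables realised in the same way. By induction on the structure of a $\glp_2$-formula $A$ one checks $T\vdash A^{*}\leftrightarrow A^{\star}$: the Boolean steps are congruence, and in the modal step one instantiates the relevant equivalence from the previous paragraph at the (closed, G\"odel-numbered) argument and uses that $\Box_T$ and $\boxBox 1_T$ respect $T$-provable equivalence of their arguments --- for $\Box_T$ this uses Necessitation and distributivity, available since $T$ contains \ea, and $\boxBox 1_T$ inherits the property because it is built from $\Box_T$. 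It is essential here, as elsewhere in the paper, that the object theory equals the meta theory so that $T$-provable equivalences may be necessitated; this is automatic for a $1$-M\"unchhausen theory, since $\Box_T$ in \eqref{equation:OldPaperIsSpecialCaseOfCurrentPaper} is the provability predicate of $T$ itself. Thus $T\vdash A^{\star}$ for every $\glp_2$-theorem $A$, which is item~1.

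For item~2, observe that the hypothesis ``$T$ contains ${\sf B}\Sigma_1^0$'' is exactly the extra assumption ``$T$ proves the $\Sigma^0_1$-collection principle'' of Lemma~\ref{theorem:OldPaperIsGood}(3) (over \ea, which $T$ contains since it codes syntax), so that $T\vdash\forall\varphi\,\forall\psi\,\exists\chi\,\big(\boxBox 1_T\varphi\vee\boxBox 1_T\psi\leftrightarrow\boxBox 1_T\chi\big)$; rewriting $\boxBox 1_T$ by the $T$-provable equivalence with $[1]_T^\Lambda$ established above yields the claim. I do not expect a substantive obstacle: the real work has been pushed into Lemma~\ref{theorem:OldPaperIsGood} and \eqref{equation:OldPaperIsSpecialCaseOfCurrentPaper}. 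The only points that need care are the routine verification that $\glp_2$-soundness is invariant under swapping provably equivalent modality interpretations --- which quietly relies on the object theory and meta theory coinciding --- and the elementary check on small $\prec$-elements underlying the case $n=1$ of \eqref{equation:OldPaperIsSpecialCaseOfCurrentPaper}.
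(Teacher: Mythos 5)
Your proposal is correct and follows exactly the paper's route: the paper's own proof is the one-liner ``this follows directly from \eqref{equation:OldPaperIsSpecialCaseOfCurrentPaper} and Lemma \ref{theorem:OldPaperIsGood}'', and you have simply unfolded the details (the $n=0,1$ instances of the provable equivalence, the invariance of soundness under $T$-provably equivalent interpretations, and the identification of ${\sf B}\Sigma^0_1$ with the $\Sigma^0_1$-collection hypothesis). No discrepancy to report.
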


\begin{proof}
This follows directly from \eqref{equation:OldPaperIsSpecialCaseOfCurrentPaper} and Lemma \ref{theorem:OldPaperIsGood}.
\end{proof}

\section{Arithmetical Soundness for One-M\"unchhausen provability}\label{section:munchhausenSound}

In this section we will consider $\Lambda$-One-M\"unchhausen theories $T$ and their corresponding $\Lambda$-One-M\"unchhausen provability predicates for some fixed ordinal $\Lambda$ represented in $T$ . We shall see that from the mere defining recursion on the provability predicate we can obtain soundness of $\glp_\Lambda$. 

Many arguments in this section require transfinite induction. As we have observed in Subsection \ref{section:OrdinalAnalysis} this means that the base theory should also prove a decent amount of transfinite induction. In Section \ref{section:MunchhausenProvabilityMultipleOracle} we shall see how the need of transfinite induction can be circumvented by slightly altering the defining recursion.

Let us start the soundness proof by some basic observations that need very little arithmetical strength to be proven. In particular, the following facts do not require transfinite induction.

\begin{lemma}\label{theorem:boxBoxExfalso}
Let $T$ be a $\Lambda$-One-M\"unchhausen theory with corresponding provability predicate $[\xi]_T^\Lambda$. We have the following.
\begin{enumerate}
\item
$T \vdash \forall \xi\, \forall \chi\ \big ( \bBox{\xi}_T \bot \to \bBox{\xi}_T \chi \big )$ and more in general,

\item
$T \vdash \forall \xi\, \forall \varphi, \chi \ \Big( \bBox{\xi}_T \varphi \, \wedge\, \Box_T (\varphi \to \chi) \to \bBox{\xi}_T\chi\Big)$,

\item \label{item:BoxConjunctions:theorem:boxBoxExfalso}
$T \vdash \forall \varphi\, \forall \psi \ \Big( \bBox{\xi}_T \varphi \, \wedge\, \Box_T \psi \to \bBox{\xi}_T(\varphi \wedge \psi)\Big)$,

\item \label{item:existsGoesInTheBox:theorem:boxBoxExfalso}
$T \vdash \exists x \ \bBox{\xi}_T \varphi (\dot x) \ \to \ \bBox{\xi}_T \exists x\varphi ( x)$.
\end{enumerate}
\end{lemma}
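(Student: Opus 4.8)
The plan is to unwind the defining recursion \eqref{equation:ramifiedProvability} in each case, noting that all four statements are instances of the general pattern ``if $\bBox{\xi}_T\varphi$ then $\bBox{\xi}_T\chi$ whenever $\Box_T(\varphi\to\chi)$''; in fact items 1, 3 and 4 are corollaries of item 2, so the real work is proving item 2, and none of it needs transfinite induction because the recursion is unwound only one level. First I would prove item 2 directly. Reason in $T$, fix $\xi$, $\varphi$, $\chi$, assume $\bBox{\xi}_T\varphi$ and $\Box_T(\varphi\to\chi)$. By \eqref{equation:ramifiedProvability} there are two cases. If $\Box_T\varphi$, then from $\Box_T(\varphi\to\chi)$ and the fact that $T$ proves the distribution/modus-ponens properties of $\Box_T$ (it proves all of \gl schematically) we get $\Box_T\chi$, hence $\bBox{\xi}_T\chi$ by \eqref{equation:ramifiedProvability} again (first disjunct). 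Otherwise there are $\psi$ and $\zeta\prec\xi$ with $\la\zeta\ra_T\psi$ and $\Box_T(\la\zeta\ra_T\psi\to\varphi)$. Composing the two provable implications inside the box — that is, from $\Box_T(\la\zeta\ra_T\psi\to\varphi)$ and $\Box_T(\varphi\to\chi)$ we obtain $\Box_T(\la\zeta\ra_T\psi\to\chi)$ — we land in the second disjunct of \eqref{equation:ramifiedProvability} with the same witnesses $\psi,\zeta$, so $\bBox{\xi}_T\chi$.

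Item 1 then follows by taking $\chi$ arbitrary and $\varphi=\bot$ in item 2, since $\Box_T(\bot\to\chi)$ is provable in $T$ (it is an instance of a propositional tautology under the box, provable by necessitation). For item 3, from $\Box_T\psi$ and ordinary propositional reasoning under the box we get $\Box_T(\varphi\to(\varphi\wedge\psi))$ — here one uses $\Box_T\psi\to\Box_T(\varphi\to\varphi\wedge\psi)$, which is again a \gl-schema instance provable in $T$ — and then item 2 with $\chi=\varphi\wedge\psi$ gives the conclusion. For item 4, fix $\xi$ and $\varphi(x)$, reason in $T$, and suppose $\exists x\,\bBox{\xi}_T\varphi(\dot x)$; fix such an $x$. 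Since $T$ proves $\Box_T(\varphi(\dot x)\to\exists y\,\varphi(y))$ — this is the formalised fact that a formula provably implies its existential generalisation, which $T$ verifies for its own provability predicate — item 2 applied with $\varphi:=\varphi(\dot x)$ and $\chi:=\exists y\,\varphi(y)$ yields $\bBox{\xi}_T\exists y\,\varphi(y)$, as required.

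The only mildly delicate point, which I would flag but not belabour, is that each appeal to ``$\Box_T$ respects provable implication'', ``$\Box_T(\bot\to\chi)$'', ``$\Box_T(\varphi\to\varphi\wedge\psi)$ from $\Box_T\psi$'', and ``$\Box_T(\varphi(\dot x)\to\exists y\,\varphi(y))$'' is a use of the fact that $T$ proves the relevant \gl-schemata for its own provability predicate $\Box_T$ — i.e. provable $\Sigma^0_1$-completeness for the numeral substitution in item 4, and provable closure under modus ponens for the rest. These are exactly the standard Hilbert–Bernays–Löb derivability conditions together with provable $\Sigma^0_1$-completeness, all available in \ea and a fortiori in any of the theories under consideration, so no transfinite induction or collection enters; the recursion \eqref{equation:ramifiedProvability} is applied purely propositionally and only once. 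Hence the ``main obstacle'' is really just bookkeeping: making sure that in the non-trivial disjunct of \eqref{equation:ramifiedProvability} the witnesses $\psi$ and $\zeta$ can be carried over unchanged, which they can, since we only ever post-compose the inner implication $\la\zeta\ra_T\psi\to\varphi$ with a $\Box_T$-provable implication out of $\varphi$.
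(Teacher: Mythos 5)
Your proposal is correct and follows essentially the same route as the paper: item 2 is proved by post-composing the boxed implication in the non-trivial disjunct of the recursion, and items 1 and 3 are read off as corollaries exactly as in the paper's proof. The only cosmetic difference is in item 4, where the paper does a case split on $\xi=0$ versus $\xi\succ 0$ and cites the provable fact $\exists x\,\Box_T\varphi(\dot x)\to\Box_T\exists x\,\varphi(x)$ directly, whereas you obtain the same thing by feeding $\Box_T(\varphi(\dot x)\to\exists y\,\varphi(y))$ into item 2 -- the underlying idea is identical.
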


\begin{proof}
Clearly, the first item follows from the second, so we reason in $T$ and  assume $\bBox{\xi}_T \varphi$. Thus, in the non-trivial case, for some $\psi$ and for some $\zeta \prec \xi$ we have $\bDiamond{\zeta}_T \psi$ and $\Box_T (\bDiamond{\zeta}_T \psi \to \varphi)$. Clearly, since $\Box_T (\varphi \to \chi)$, we have also $\Box_T (\bDiamond{\zeta}_T \psi \to \chi)$ so that $\bBox{\xi}_T\chi$. 

The third item follows from the second since in case of $\Box_T \psi$ we also have $\Box_T \big( \varphi \to (\varphi \wedge \psi)\big)$.

The fourth item follows by an easy case distinction on $\xi$ being zero or not and both cases essentially follow from the fact that provably $\exists x \Box_T \varphi (\dot x) \ \to \ \Box_T \exists x \varphi (x)$.
\end{proof}

From our defining recursion \eqref{equation:ramifiedProvability}, we get the axiom of negative introspection and the axiom of monotonicity almost for free.

\begin{lemma}
Let $\xi < \zeta < \Lambda$ be ordinals in a $\Lambda$-One-M\"unchhausen theory $T$. We have 
\begin{enumerate}
\item\label{item:negativeIntrospection:theorem:crossAxiomsSoundNonFormalized}
$T \vdash \forall \varphi\ \big( \bDiamond{\xi}_T \varphi  \ \to \  \bBox{\zeta}_T \bDiamond{\xi}_T \varphi \big)$;

\item\label{item:monotonicity:theorem:crossAxiomsSoundNonFormalized}
$T \vdash \forall \varphi \ \big ( \bBox{\xi}_T \varphi \ \to \ \bBox{\zeta}_T \varphi \big )$;

\end{enumerate}
\end{lemma}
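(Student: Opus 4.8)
Looking at the two items to prove, both follow fairly directly from the defining recursion \eqref{equation:ramifiedProvability}, using the basic facts already established in Lemma~\ref{theorem:boxBoxExfalso}. I would treat each item separately, reasoning inside $T$.

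\medskip

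\textbf{Plan for Item~\ref{item:negativeIntrospection:theorem:crossAxiomsSoundNonFormalized} (negative introspection).} I would reason in $T$, fix $\varphi$, and assume $\bDiamond{\xi}_T \varphi$, i.e. $\neg \bBox{\xi}_T \neg\varphi$. The goal is $\bBox{\zeta}_T \bDiamond{\xi}_T \varphi$. Since $\xi \prec \zeta$ (using the requirement $T\vdash (\xi\prec\zeta)\to[\zeta]^\Lambda_T(\xi\prec\zeta)$ so that this ordering fact is available and boxable inside $T$), I want to witness the existential in the recursion for $\bBox{\zeta}_T$: I need a formula $\psi$ and an ordinal $\prec\zeta$ such that $\bDiamond{\cdot}_T\psi$ holds and $\Box_T(\bDiamond{\cdot}_T\psi \to \bDiamond{\xi}_T\varphi)$. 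The natural choice is to take the oracle ordinal to be $\xi$ itself and the witnessing formula $\psi$ so that $\bDiamond{\xi}_T\psi$ is our given hypothesis $\bDiamond{\xi}_T\varphi$ — but we need the \emph{provability} $\Box_T(\bDiamond{\xi}_T\psi \to \bDiamond{\xi}_T\varphi)$. Here the cleanest route is: because $\bDiamond{\xi}_T\varphi$ is (provably) a formula of the shape $\neg\bBox{\xi}_T\neg\varphi$, i.e. a $\Pi$-type statement about the $[\xi]$ predicate, and because $\Box_T$ provably proves true $\Sigma^0_1$ facts and reflects appropriately — actually the key point is that $\bDiamond{\xi}_T\varphi \to \bDiamond{\xi}_T\varphi$ is a logical triviality, so $\Box_T(\bDiamond{\xi}_T\varphi \to \bDiamond{\xi}_T\varphi)$ holds by necessitation (available inside $T$ since $\Box_T$ satisfies the rules of $\gl$). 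Then taking $\psi := \varphi$ and oracle ordinal $\xi \prec \zeta$, the two conjuncts $\bDiamond{\xi}_T\varphi$ and $\Box_T(\bDiamond{\xi}_T\varphi \to \bDiamond{\xi}_T\varphi)$ are exactly what the recursion for $\bBox{\zeta}_T(\bDiamond{\xi}_T\varphi)$ demands, so we conclude $\bBox{\zeta}_T\bDiamond{\xi}_T\varphi$. No transfinite induction is needed.

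\medskip

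\textbf{Plan for Item~\ref{item:monotonicity:theorem:crossAxiomsSoundNonFormalized} (monotonicity).} Again reason in $T$, fix $\varphi$, and assume $\bBox{\xi}_T\varphi$. By the recursion \eqref{equation:ramifiedProvability} for $\bBox{\xi}_T$, either $\Box_T\varphi$, in which case $\bBox{\zeta}_T\varphi$ follows immediately by the recursion for $\bBox{\zeta}_T$ (the first disjunct); or there is a formula $\psi$ and an ordinal $\eta\prec\xi$ with $\bDiamond{\eta}_T\psi$ and $\Box_T(\bDiamond{\eta}_T\psi\to\varphi)$. In the latter case, since $\eta\prec\xi\prec\zeta$ and $\prec$ is transitive (provably in $T$), we have $\eta\prec\zeta$, so the very same witnesses $\psi,\eta$ satisfy the existential in the recursion for $\bBox{\zeta}_T\varphi$, giving $\bBox{\zeta}_T\varphi$. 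So this item is a pure unfolding of the recursion plus transitivity of $\prec$; again no transfinite induction.

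\medskip

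\textbf{Main obstacle.} The only genuinely delicate point is the internal bookkeeping: making sure the ordering facts $\xi\prec\zeta$ (and their $\Box_T$-provable versions) are actually available inside $T$ rather than merely externally true — this is precisely why the definition of a $\Lambda$-One-M\"unchhausen theory bakes in $T\vdash(\xi\prec\zeta)\to[\zeta]^\Lambda_T(\xi\prec\zeta)$ and $\xi<\zeta<\Lambda \Rightarrow T\vdash\xi\prec\zeta$. For monotonicity one also needs $\Box_T$-provable transitivity of $\prec$ to pass from $\eta\prec\xi$ and $\xi\prec\zeta$ to $\eta\prec\zeta$ under the box, but this is part of the assumed basic facts about $\la\Lambda,\prec\ra$. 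I do not expect either item to require L\"ob's theorem, reflexive induction, or transfinite induction — that is the point of isolating these two axioms here, before the harder transitivity and L\"ob axioms that presumably come next.
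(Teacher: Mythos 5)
Your proof is correct and follows essentially the same route as the paper: Item 1 is witnessed by the oracle $\la\xi\ra_T\varphi$ itself together with the trivially provable implication $\Box_T(\la\xi\ra_T\varphi\to\la\xi\ra_T\varphi)$, and Item 2 is a direct unfolding of the recursion using $T$-provable transitivity of $\prec$. (One tiny over-requirement in your closing remarks: the ordinal constraint $\eta\prec\zeta$ sits \emph{outside} the $\Box_T$ in the defining recursion, so plain $T$-provable transitivity suffices and no boxed version is needed.)
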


\begin{proof}
Item \ref{item:negativeIntrospection:theorem:crossAxiomsSoundNonFormalized} is immediate since $\Box_T(\boxDiamond{\xi}_T \varphi  \to \boxDiamond{\xi}_T \varphi )$ using the fact that $\xi < \zeta$ implies $T\vdash \xi \prec \zeta$. Likewise, Item \ref{item:monotonicity:theorem:crossAxiomsSoundNonFormalized} follows directly from the definition since provably $\eta \prec \xi \to \eta \prec \zeta$ (recall that we required that M\"unchhausen theories prove the transitivity of $\prec$ and moreover, $\xi < \zeta$ implies $T\vdash \xi \prec \zeta$).
\end{proof}

It is easy yet important to observe that we actually have a formalized version of the previous lemma where we internally quantify over the ordinals. As such, the formalized lemma can be used for example in an induction where possibly non-standard ordinals are called upon. 

\begin{lemma}\label{theorem:crossAxiomsSound}
Let  $T$ be a $\Lambda$-One-M\"unchhausen theory. We have 
\begin{enumerate}
\item\label{item:negativeIntrospection:theorem:crossAxiomsSound}
$T \vdash \forall \xi {\prec} \zeta {\prec} \Lambda\, \forall \varphi\ \big( \bDiamond{\xi}_T \varphi  \ \to \  \bBox{\zeta}_T \bDiamond{\xi}_T \varphi \big)$;

\item\label{item:monotonicity:theorem:crossAxiomsSound}
$T \vdash \forall \xi {\prec} \zeta {\prec} \Lambda\,  \forall \varphi \ \big ( \bBox{\xi}_T \varphi \ \to \ \bBox{\zeta}_T \varphi \big )$;

\end{enumerate}
\end{lemma}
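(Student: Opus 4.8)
The statement to prove is the formalized version of the preceding lemma: that a $\Lambda$-One-M\"unchhausen theory $T$ proves, with the ordinals quantified \emph{internally}, both the negative introspection property $\forall \xi {\prec} \zeta {\prec} \Lambda\, \forall \varphi\ ( \bDiamond{\xi}_T \varphi \to \bBox{\zeta}_T \bDiamond{\xi}_T \varphi )$ and the monotonicity property $\forall \xi {\prec} \zeta {\prec} \Lambda\, \forall \varphi\ (\bBox{\xi}_T \varphi \to \bBox{\zeta}_T \varphi)$. The approach is to re-run the proof of the unformalized lemma, but now reasoning entirely inside $T$ with $\xi, \zeta$ as internal variables rather than as metatheoretic ordinals, and to be careful that every ingredient invoked is itself available in the formalized (internally quantified) form. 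Crucially, no transfinite induction is needed — the argument is a direct unfolding of the defining recursion \eqref{equation:ramifiedProvability}.

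\textbf{Item \ref{item:negativeIntrospection:theorem:crossAxiomsSound}.} I would reason in $T$, fix $\xi \prec \zeta \prec \Lambda$ and a formula $\varphi$, and assume $\bDiamond{\xi}_T \varphi$. Now apply the defining equivalence for $[\zeta]_T^\Lambda$ to the target formula $\bDiamond{\xi}_T \varphi$: it suffices to exhibit a witness, and the obvious one is $\psi := \varphi$ together with the ordinal $\xi$ itself. We have $\xi \prec \zeta$ by hypothesis, we have $\bDiamond{\xi}_T \varphi$ by assumption, and we trivially have $\Box_T(\bDiamond{\xi}_T \varphi \to \bDiamond{\xi}_T \varphi)$ by necessitation applied to a propositional tautology. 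Hence the right-hand disjunct of the recursion for $[\zeta]_T^\Lambda (\bDiamond{\xi}_T \varphi)$ holds, so $\bBox{\zeta}_T \bDiamond{\xi}_T \varphi$. The one subtlety to flag explicitly is that this uses the \emph{second} defining requirement on the representation, namely $T\vdash (\xi \prec \zeta) \to [\zeta]^\Lambda_T (\xi \prec \zeta)$ is \emph{not} needed here directly — what is needed is simply that $\xi \prec \zeta$ is available as a fact inside the box's side condition, which it is since we are working under that hypothesis and the recursion quantifies over $\xi \prec \zeta$ internally. So this item goes through cleanly.

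\textbf{Item \ref{item:monotonicity:theorem:crossAxiomsSound}.} Again reason in $T$, fix $\xi \prec \zeta \prec \Lambda$ and $\varphi$, and assume $\bBox{\xi}_T \varphi$. Unfold the recursion for $[\xi]_T^\Lambda \varphi$. In the case $\Box_T \varphi$, the recursion for $[\zeta]_T^\Lambda \varphi$ gives $\bBox{\zeta}_T \varphi$ immediately via its left disjunct. In the other case there are $\psi$ and $\eta \prec \xi$ with $\bDiamond{\eta}_T \psi$ and $\Box_T(\bDiamond{\eta}_T \psi \to \varphi)$; since $T$ proves transitivity of $\prec$ and since $\xi \prec \zeta$ holds by hypothesis, we get $\eta \prec \zeta$, so the very same witnesses $\psi, \eta$ verify the right disjunct of the recursion for $[\zeta]_T^\Lambda \varphi$, giving $\bBox{\zeta}_T \varphi$. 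The main thing to be careful about — and the only place any real content beyond the unformalized proof enters — is that the step $\eta \prec \xi \to \eta \prec \zeta$ must be performed inside $T$ using the $T$-provable transitivity of $\prec$, which is exactly the first listed requirement on the representation. Since that requirement is available by assumption and the rest is a routine case split inside $T$ with no induction, the obstacle here is essentially nil; the whole point of the lemma is just to record that the earlier argument relativizes to internal ordinal quantifiers, which it does because the defining recursion \eqref{equation:ramifiedProvability} was itself stated with $\xi \prec \zeta$ quantified inside $T$.
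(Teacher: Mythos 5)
Your proof is correct and matches the paper's approach: the paper proves the externally quantified version by exactly this witness-plugging argument (take $\psi:=\varphi$, $\xi':=\xi$ for negative introspection; reuse the same witnesses via $T$-provable transitivity of $\prec$ for monotonicity) and then merely observes that the argument internalizes, which is precisely what you carry out. Your explicit remarks on which representation requirements are and are not used are accurate.
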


These cross axioms are for many interpretations of $\glp_\Lambda$ actually the harder axioms to prove sound. But in the M\"unchhausen interpretations they come almost for free.

The above lemma can also be interpreted that any 1-M\"unchhausen provability predicate is monotone in the ordinal parameter. We note that it is not trivial to see that the 1-M\"unchhausen provability predicate is monotone in the underlying base theory: Suppose that, for example we have a formulation of elementary arithmetic and axiomatic set theory so that provably $\ea \subset \zfc$. This means that for any formula $\varphi$ we have $\Box_\ea \varphi \to \Box_\zfc \varphi$. Is it now easy to see that we also have the expected $[1]_\ea \varphi \to [1]_\zfc \varphi$?

Let us suppose that $\bBox{1}_\ea \varphi$ because of some $\bDiamond{0}_\ea\psi$ with $\Box_\ea (\bDiamond{0}_\ea \psi \to \varphi)$. A priori it is not at all clear how this information will yield us a $\psi'$ so that $\Box_\zfc \Big(\bDiamond{0}_\zfc \psi' \to \varphi \Big)$ and furthermore $\bDiamond{0}_\zfc \psi'$: where would we get so much $\zfc$ consistency strength from?\footnote{We have that \zfc is much stronger than \ea, whence provably $\Diamond_\ea \chi \to \Box_\zfc \Diamond_\ea \chi$. Consequently, in this particular example we could take $\psi' = \Diamond_\ea \psi$: in case $\Box_\zfc \bot$ we trivially have $\Box_\zfc \varphi$ and $\Diamond_\zfc \top \to (\Diamond_\ea \psi \leftrightarrow \Diamond_\zfc \Diamond_\ea \psi)$. However, for general $T\subset U$ we cannot use the same formula $\Diamond_T \psi$ to guarantee $[1]_T \varphi \to [1]_U \varphi$.} 


At this point we can prove the soundness of the necessitation rule.
\begin{lemma}\label{theorem:necessitationSoundForOneMunchhausenProvability}
Let  $T$ be a $\Lambda$-One-M\"unchhausen theory with corresponding 1-M\"unchhausen provability predicate $[\alpha]_T^\Lambda$. For any $\alpha\prec\Lambda$ we have that if $T\vdash \varphi$, then $T\vdash [\alpha]^\Lambda_T \varphi$.
\end{lemma}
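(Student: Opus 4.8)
The plan is to prove this by a standard "formalized provability" argument: if $T\vdash\varphi$, then $T$ proves its own provability of $\varphi$, i.e.\ $T\vdash\Box_T\varphi$, and then I would use the defining recursion \eqref{equation:ramifiedProvability} to climb from $\Box_T\varphi$ up to $[\alpha]^\Lambda_T\varphi$ for any $\alpha\prec\Lambda$. Concretely: assume $T\vdash\varphi$. Since $\Box_T$ is a standard provability predicate satisfying provable $\Sigma^0_1$-completeness (a \gl-style derivability condition, which any theory coding syntax satisfies), we get $T\vdash\Box_T\varphi$. By Lemma~\ref{theorem:ZeroMunchhausenIsNormalProvability}, $T\vdash [0]^\Lambda_T\varphi$, which already handles the case $\alpha=0$ (where $0$ denotes the $\prec$-minimal element).

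For $\alpha\succ 0$ I would invoke the defining equivalence directly. Reasoning in $T$: from $\Box_T\varphi$ we immediately get the left disjunct of the right-hand side of the recursion
\[
[\alpha]^\Lambda_T\varphi \ \leftrightarrow\ \Box_T\varphi\ \vee\ \exists\psi\,\exists\,\xi{\prec}\alpha\ \big(\la\xi\ra^\Lambda_T\psi\ \wedge\ \Box_T(\la\xi\ra^\Lambda_T\psi\to\varphi)\big),
\]
so $T\vdash[\alpha]^\Lambda_T\varphi$. Note that here we use that $\alpha\prec\Lambda$ is a genuine ordinal notation, so the instance of the recursion for this particular $\alpha$ is available from the universally quantified form $T\vdash\forall\varphi\,\forall\alpha{\prec}\Lambda(\ldots)$ by instantiation. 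In fact the cleaner route is: since the recursion is $T$-provably $\Box_T\varphi\to[\alpha]^\Lambda_T\varphi$ uniformly, and $T\vdash\Box_T\varphi$, modus ponens inside $T$ gives $T\vdash[\alpha]^\Lambda_T\varphi$. No transfinite induction is needed for this argument.

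I expect the only subtlety — and the reason the paper's footnote about the requirement ``$\xi<\zeta<\Lambda$ implies $T\vdash\xi\prec\zeta$'' is relevant — to be making precise what ``$T\vdash[\alpha]^\Lambda_T\varphi$'' means for an external ordinal $\alpha$: one must know that $\alpha$ has a canonical term/numeral representation in $T$ whose denotation $T$-provably lies $\prec\Lambda$, so that plugging it into the recursion is legitimate. This is exactly what the standing assumptions on the representation of $\Lambda$ and $\prec$ guarantee. If one only wants the internally-quantified version, one gets the even cleaner statement $T\vdash\forall\alpha{\prec}\Lambda\,(\Box_T\varphi\to[\alpha]^\Lambda_T\varphi)$ directly from the recursion, and then Löb-style necessitation on the hypothesis $T\vdash\varphi$ finishes it. The ``hard part'' is therefore essentially bookkeeping rather than mathematics: there is no obstacle of substance, which is the point of the remark preceding the lemma that these cross-type properties come ``almost for free'' from the M\"unchhausen recursion.
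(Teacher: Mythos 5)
Your proof is correct and follows essentially the same route as the paper: from $T\vdash\varphi$ obtain $T\vdash\Box_T\varphi$ by $\Sigma^0_1$-completeness (since ``$T\vdash\varphi$'' is a true $\Sigma^0_1$ sentence), and then pass to $[\alpha]^\Lambda_T\varphi$. The only cosmetic difference is that the paper gets the second step by citing monotonicity (Lemma~\ref{theorem:crossAxiomsSound}), whereas you invoke the first disjunct of the defining recursion directly — these are the same one-line observation.
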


\begin{proof}
We will only show $\frac{\varphi}{\Box_T \varphi}$. This is sufficient since necessitation for larger ordinals $\frac{\varphi}{[\alpha]_T \varphi}$ follows from the monotonicity of the predicate in $\alpha$. But, as always $T\vdash \varphi$ can be expressed as a $\Sigma^0_1$ sentence which is true whence by $\Sigma^0_1$ completeness we get $T\vdash \Box_T \varphi$.
\end{proof}







We shall now prove the remaining \glp axioms to be sound. The following lemma which was proven in \cite{FernandezJoosten:2018:OmegaRuleInterpretationGLP}, tells us that we don't need to care about L\"ob's axiom $\boxBox{\xi} (\boxBox{\xi} \varphi \to \varphi) \to \boxBox{\xi} \varphi$. 

\begin{lemma}\label{theorem:noLoebNeeded}
Let ${\sf GL}^\blacksquare$ denote the extension of $\sf GL$ with a new operator $\blacksquare$ and the following axioms for all formulas $\phi,\mbox{ and }\psi$:
\begin{enumerate}
\item $\vdash\nc\phi\to \blacksquare\phi$,
\item $\vdash \blacksquare(\phi\to\psi)\to(\blacksquare\phi\to \blacksquare\psi)$ and,
\item $\vdash \blacksquare\phi\to \blacksquare\blacksquare\phi$.
\end{enumerate}

Then, for all $\phi$,
\[{\sf GL}^\blacksquare\vdash \blacksquare(\blacksquare\phi\to\phi)\to \blacksquare\phi.\]
\end{lemma}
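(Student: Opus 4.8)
The plan is to derive the "mixed" Löb-type principle $\blacksquare(\blacksquare\phi\to\phi)\to\blacksquare\phi$ inside ${\sf GL}^\blacksquare$ by mimicking the usual fixed-point proof of Löb's theorem, but using the $\nc$-modality (which has Löb's axiom in full) to manufacture the fixed point, and then transferring the conclusion to $\blacksquare$ via axiom (1), i.e. $\nc\phi\to\blacksquare\phi$. First I would fix $\phi$ and introduce, by the diagonal lemma / arithmetical fixed-point available in ${\sf GL}$ (or rather by the modal fixed-point theorem for ${\sf GL}$, since $\nc$ occurs only boxed in the relevant formula), a sentence $\chi$ such that
\[
{\sf GL}^\blacksquare\vdash \chi \leftrightarrow \big(\nc\chi \to (\blacksquare\phi\to\phi)\big).
\]
Actually, a cleaner choice adapted to the mixed setting is to take $\chi$ with ${\sf GL}^\blacksquare\vdash \chi\leftrightarrow(\blacksquare\chi\to\phi)$; I would experiment with both, but the second is the one that makes the $\blacksquare$-machinery do the work.

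The key steps, in order, would be: (i) from the fixed point $\chi\leftrightarrow(\blacksquare\chi\to\phi)$, apply necessitation for $\nc$ and distribution to get $\nc\chi\to\nc(\blacksquare\chi\to\phi)$, hence $\nc\chi\to(\nc\blacksquare\chi\to\nc\phi)$; (ii) use axiom (1) $\nc\psi\to\blacksquare\psi$ together with axiom (3) $\blacksquare\psi\to\blacksquare\blacksquare\psi$ and the interaction $\nc\chi\to\nc\blacksquare\chi$ (which needs $\nc\chi\to\blacksquare\chi$ from (1), then $\nc$ of transitivity or rather $\nc\chi\to\nc\nc\chi\to\nc\blacksquare\chi$ via (1) under a box) to push everything into the $\blacksquare$-layer; (iii) assume the hypothesis $\blacksquare(\blacksquare\phi\to\phi)$ and combine it with the boxed form of the fixed-point equivalence to obtain $\blacksquare\blacksquare\chi\to\blacksquare\phi$, and then $\blacksquare\chi\to\blacksquare\phi$ using (3); (iv) feed this back through $\chi\leftrightarrow(\blacksquare\chi\to\phi)$ to conclude first $\nc\chi$ (using Löb's axiom for $\nc$ in the form $\nc(\nc\chi\to\chi)\to\nc\chi$, which is where the genuine well-foundedness enters), then $\blacksquare\chi$ via (1), and finally $\blacksquare\phi$.

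The main obstacle I expect is step (iv): arranging the bookkeeping so that Löb's axiom for $\nc$ can actually be invoked. Concretely, I need to reach a point where I have derived $\nc\chi\to\chi$ (or $\blacksquare\chi\to\chi$ in a form to which $\nc$-Löb applies), and the difficulty is that the fixed-point gives $\chi\leftrightarrow(\blacksquare\chi\to\phi)$ in the $\blacksquare$-layer whereas Löb lives in the $\nc$-layer — so the real content is the lemma that $\nc$ "dominates" $\blacksquare$ via axiom (1), which lets a $\blacksquare$-derivation of $\chi$ under the assumption $\nc\chi$ be promoted to $\nc(\nc\chi\to\chi)$. Once that bridge is in place the rest is the standard Löb argument. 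A secondary, purely bureaucratic point is justifying the existence of the fixed point $\chi$ at the level of the propositional logic ${\sf GL}^\blacksquare$ rather than arithmetically; here I would appeal to the modal fixed-point theorem for ${\sf GL}$, noting that $\chi$ occurs only under $\blacksquare$ (hence, via axiom (1), effectively "guarded"), or simply remark that in all intended applications ${\sf GL}^\blacksquare$ is realised arithmetically and the diagonal lemma supplies $\chi$ directly.
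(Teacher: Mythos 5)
There is a genuine gap, and it sits exactly at the point you call ``secondary'' and ``purely bureaucratic'': the existence of the fixed point $\chi\leftrightarrow(\blacksquare\chi\to\phi)$ inside the propositional logic ${\sf GL}^\blacksquare$. The de Jongh--Sambin fixed-point theorem produces fixed points only for formulas in which the variable occurs $\Box$-modalized, and its proof makes essential use of L\"ob's axiom for $\Box$; here $\chi$ occurs under $\blacksquare$, for which the logic only posits K4-type axioms, and $\logic{K4}$ notoriously does \emph{not} have the fixed-point property. Worse, having fixed points for $\blacksquare$-modalized formulas is essentially equivalent to the L\"ob principle for $\blacksquare$ that you are trying to prove, so appealing to such a fixed point is circular. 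The fallback of invoking the arithmetical diagonal lemma proves a different statement (soundness of the L\"ob scheme under each arithmetical realisation), not the lemma as stated, which is a derivability claim in a propositional modal logic. Note also that once the fixed point is granted, your steps (i)--(iv) never actually use L\"ob's axiom for $\Box$ --- the derivation is just the K4-plus-fixed-point proof of L\"ob --- which is a sign that the real content of the lemma (the interaction axiom $\Box\phi\to\blacksquare\phi$ tying $\blacksquare$ to the genuinely L\"obian modality) has not been used where it must be.

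The intended argument needs no fixed points at all. Put $\psi:=\blacksquare(\blacksquare\phi\to\phi)\to\blacksquare\phi$ and apply L\"ob's \emph{rule} for $\Box$ (admissible in \gl: from $\vdash\Box\psi\to\psi$ infer $\vdash\Box(\Box\psi\to\psi)$ by necessitation, then $\vdash\Box\psi$ by L\"ob's axiom, then $\vdash\psi$). To verify the premise $\Box\psi\to\psi$, assume $\Box\psi$ and $\blacksquare(\blacksquare\phi\to\phi)$. Axiom (1) turns $\Box\psi$ into $\blacksquare\bigl(\blacksquare(\blacksquare\phi\to\phi)\to\blacksquare\phi\bigr)$; axiom (3) gives $\blacksquare\blacksquare(\blacksquare\phi\to\phi)$; distribution (2) then yields $\blacksquare\blacksquare\phi$, and one more application of (2) to $\blacksquare(\blacksquare\phi\to\phi)$ and $\blacksquare\blacksquare\phi$ gives $\blacksquare\phi$. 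This four-line computation is where axioms (1)--(3) each do their work, and it is the step your plan is missing.
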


%

Consequently, we only need to focus on the transitivity axioms $\bBox{\xi}\varphi \to \bBox{\xi}\bBox{\xi}\varphi$ and distribution axioms $\bBox{\xi} (\varphi \to \psi) \to (\bBox{\xi} \varphi \to \bBox{\xi} \psi)$ in our soundness proof. It is in this part where we need to assume that the object and meta theory are equal so that we have access to transfinite reflexive induction as formulated in Lemma \ref{theorem:TransfiniteReflexiveInduction}.

\begin{theorem}\label{theorem:boxGLPSound}
Let $T$ be a $\Lambda$-One-M\"unchhausen theory and let $[\alpha]^\Lambda_T$ be a corresponding provability predicate. If $T$ proves transfinite $\Pi_2^0([\alpha]^\Lambda_T)$ induction we have that
\begin{enumerate}

\item \label{item:GLPsound:theorem:boxGLPSound}
$T$ proves that all the rules and axioms of \glp are sound wr.t.~$T$ by interpreting $[\alpha]$ as $\bBox{\alpha}_T^\Lambda$; in particular

\item \label{item:distributivity:theorem:boxGLPSound}
Distributivity:
$T \vdash \forall \alpha \, \forall \varphi \, \forall \psi \ \Big( \bBox{\alpha}_T^\Lambda(\varphi \to \psi) \to (\bBox{\alpha}_T^{\Lambda}\varphi \to \bBox{\alpha}_T^{\Lambda}\psi)\Big)$;

\item \label{item:conjunctions:theorem:boxGLPSound}
Closure under conjunctions:
\[
T\vdash \forall \alpha \, \forall \varphi\, \forall \psi \ \Big(  \bBox{\alpha}_T^{\Lambda}\varphi \wedge \bBox{\alpha}_T^{\Lambda}\psi \ \ \leftrightarrow \ \ \bBox{\alpha}_T^{\Lambda}(\varphi \wedge \psi)  \Big);
\]

\item \label{item:disjunctions:theorem:boxGLPSound}
Weak closure under disjunctions:
$T\vdash \forall \alpha \, \forall \varphi\, \forall \psi \, \exists \chi \ \Big(\bBox{\alpha}_T^{\Lambda}\varphi \vee \bBox{\alpha}_T^{\Lambda}\psi \ \ \leftrightarrow \bBox{\alpha}_T^{\Lambda}\chi \Big)$;

\item \label{item:transitivity:theorem:boxGLPSound}
Transitivity:
$T \vdash \forall \alpha \, \forall \varphi\ \Big ( \bBox{\alpha}_T^{\Lambda} \varphi \to \bBox{\alpha}_T^{\Lambda} \bBox{\alpha}_T^{\Lambda}\varphi \Big )$.

\end{enumerate}
\end{theorem}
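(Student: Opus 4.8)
The plan is to prove the items in a carefully chosen order so that each one feeds the next, culminating in the two genuinely modal axioms (distributivity and transitivity) which will both be handled by transfinite reflexive induction (Lemma \ref{theorem:TransfiniteReflexiveInduction}) using the hypothesis that $T$ proves transfinite $\Pi_2^0([\alpha]_T^\Lambda)$ induction. First I would record that by Lemma \ref{theorem:necessitationSoundForOneMunchhausenProvability} necessitation is sound and by Lemma \ref{theorem:crossAxiomsSound} the negative-introspection and monotonicity axioms are sound (with internal quantification over ordinals), and by Lemma \ref{theorem:noLoebNeeded} L\"ob's axiom is free once we have distributivity and transitivity; hence Item \ref{item:GLPsound:theorem:boxGLPSound} will follow once Items \ref{item:distributivity:theorem:boxGLPSound}--\ref{item:transitivity:theorem:boxGLPSound} are established, together with closure under modus ponens, which is immediate. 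So the work is Items 2--5.

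For distributivity (Item \ref{item:distributivity:theorem:boxGLPSound}) I would reason in $T$ and run transfinite reflexive induction on $\alpha$, proving $\forall\varphi\forall\psi\,\big(\bBox{\alpha}(\varphi\to\psi)\to(\bBox{\alpha}\varphi\to\bBox{\alpha}\psi)\big)$. At stage $\alpha$ we may assume both the statement for all $\beta\prec\alpha$ and $\Box_T$ of the statement for all $\beta\prec\dot\alpha$. Unfolding the recursion \eqref{equation:ramifiedProvability}: if $\bBox{\alpha}(\varphi\to\psi)$ and $\bBox{\alpha}\varphi$, we do a case split on whether each holds in virtue of the plain $\Box_T$ disjunct or via an oracle $\bDiamond{\xi}\chi$ with $\xi\prec\alpha$. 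The $\Box_T$/$\Box_T$ case is ordinary distributivity of $\Box_T$. When both use oracles, say $\bDiamond{\xi}\chi$ (witnessing $\bBox{\alpha}(\varphi\to\psi)$) and $\bDiamond{\xi'}\chi'$ (witnessing $\bBox{\alpha}\varphi$), the idea is to replace the two oracles by a single oracle at $\max(\xi,\xi')\prec\alpha$: using monotonicity (Lemma \ref{theorem:crossAxiomsSound}) push both consistency statements up to $\eta:=\max(\xi,\xi')$, then use the boxed inductive hypothesis — i.e.\ $\Box_T$ of closure under conjunctions and of distributivity at level $\eta$, which we need in their already-proven internally-quantified form or bootstrapped at the same stage — to combine $\bDiamond{\eta}\chi$ and $\bDiamond{\eta}\chi'$ into one oracle sentence of the form $\bDiamond{\eta}\theta$ from which, provably in $T$, $\psi$ follows. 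This is where closure under conjunctions (Item \ref{item:conjunctions:theorem:boxGLPSound}) is really used, so I would prove Item \ref{item:conjunctions:theorem:boxGLPSound} first by the same reflexive-induction template (the right-to-left direction is Lemma \ref{theorem:boxBoxExfalso}.\ref{item:BoxConjunctions:theorem:boxBoxExfalso} composed with distributivity; the left-to-right direction is the oracle-merging argument just sketched, needing only monotonicity plus the $\Pi_2^0$ inductive hypothesis under a box). Item \ref{item:disjunctions:theorem:boxGLPSound} (weak closure under disjunctions) is then a direct $\Sigma^0_1$-style manipulation: given $\bBox{\alpha}\varphi$ and $\bBox{\alpha}\psi$ one builds $\chi$ as a suitable disjunction of the witnessing data (for $\alpha=0$ it is just $\varphi\vee\psi$; for $\alpha\succ 0$ merge the oracle witnesses at the larger level as before and take $\chi=\theta\vee\theta'$ under that oracle), and the converse uses Lemma \ref{theorem:boxBoxExfalso}.\ref{item:BoxConjunctions:theorem:boxBoxExfalso}-type reasoning — this step needs no new induction beyond what Items 2--3 give.

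Finally, transitivity (Item \ref{item:transitivity:theorem:boxGLPSound}): again reason in $T$ by transfinite reflexive induction on $\alpha$. Assume $\bBox{\alpha}\varphi$. If this holds via $\Box_T\varphi$ then $\Box_T\Box_T\varphi$ by provable $\Sigma^0_1$-completeness, and $\Box_T\Box_T\varphi\to\bBox{\alpha}\bBox{\alpha}\varphi$ needs $\Box_T(\Box_T\varphi\to\bBox{\alpha}\varphi)$, which is just necessitation applied to monotonicity from level $0$. If instead $\bBox{\alpha}\varphi$ holds via an oracle $\bDiamond{\xi}\psi$, $\xi\prec\alpha$, with $\Box_T(\bDiamond{\xi}\psi\to\varphi)$, then by Lemma \ref{theorem:crossAxiomsSound}.\ref{item:negativeIntrospection:theorem:crossAxiomsSound} we have $\bBox{\alpha}\bDiamond{\xi}\psi$, and it suffices to show $\Box_T(\bDiamond{\xi}\psi\to\bBox{\alpha}\varphi)$; but the implication $\bDiamond{\xi}\psi\to\bBox{\alpha}\varphi$ is itself a true $\Sigma^0_1$ consequence — indeed from $\bDiamond{\xi}\psi$ and $\Box_T(\bDiamond{\xi}\psi\to\varphi)$ the recursion gives $\bBox{\alpha}\varphi$ outright — so by $\Sigma^0_1$-completeness $\Box_T(\bDiamond{\xi}\psi\to\bBox{\alpha}\varphi)$ holds, and then the recursion at level $\alpha$ with oracle $\bDiamond{\xi}\psi$ yields $\bBox{\alpha}\bBox{\alpha}\varphi$. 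I expect the main obstacle to be the bookkeeping in the distributivity/conjunction step: making the oracle-merging genuinely formalizable requires the \emph{boxed} inductive hypothesis (hence the reflexive rather than ordinary induction) and careful attention that the merged oracle sits strictly below $\alpha$ and that all the auxiliary $\glp_{\eta}$-style facts used under the box are themselves available as $\Pi_2^0$ statements covered by the induction — essentially reproving, internally and uniformly in the ordinal, the closure facts of Lemma \ref{theorem:basicGLPlemmas} and Corollary \ref{theorem:FirstTwoPredicatesAreGood} at arbitrary levels.
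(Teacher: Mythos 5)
Your overall architecture (dispose of the rules, the cross axioms and L\"ob via the earlier lemmas; prove the remaining items simultaneously by transfinite reflexive induction; establish transitivity by boxing the witnessing data via provable $\Sigma^0_1$-completeness) matches the paper, and your transitivity argument is essentially the one given there. But there is a genuine gap in your treatment of Item \ref{item:disjunctions:theorem:boxGLPSound}, and it propagates into Item \ref{item:conjunctions:theorem:boxGLPSound}. First, your proposed base case is false: $\Box_T\varphi\vee\Box_T\psi$ is \emph{not} equivalent to $\Box_T(\varphi\vee\psi)$ (take $\varphi$ a Rosser sentence and $\psi$ its negation). Second, and more importantly, in the general case the witnessing data for $[\alpha]\varphi\vee[\alpha]\psi$ contains the disjunction $\Box_T(\la\beta\ra\varphi'\to\varphi)\vee\Box_T(\la\beta\ra\psi'\to\psi)$ of two provability statements, and no ``direct $\Sigma^0_1$-style manipulation'' turns a disjunction of two $\Box_T$-statements into a single one. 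The paper's proof of Item \ref{item:disjunctions:theorem:boxGLPSound} is precisely where the FGH theorem (Theorem \ref{theorem:FGH}) enters: after packaging the oracle part into a single $\la\beta\ra\chi'$ (using Lemma \ref{theorem:basicGLPlemmas}.\ref{item:boxDisjunctionDiamond:theorem:basicGLPlemmas} under the reflexive inductive hypothesis), one verifies that $T+\la\beta\ra\chi'$ is consistent (via negative introspection and distributivity at level $\alpha$, under the case assumption $\la\alpha\ra\top$) and then applies FGH to that theory to realise the $\Sigma^0_1$ disjunction of boxes as a single $\Box_T(\la\beta\ra\chi'\to\chi)$. The low levels $\alpha<2$, where this bootstrapping is not yet available, are handled separately via Corollary \ref{theorem:FirstTwoPredicatesAreGood}, which is where $\Sigma^0_1$-collection is used. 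Your proposal never invokes FGH at this point, and without it the step does not go through.

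Two smaller points. Your dependency order is backwards: merging the two oracles $\la\beta\ra\varphi'\wedge\la\beta\ra\psi'$ into a single $\la\beta\ra\chi$ in the proof of Item \ref{item:conjunctions:theorem:boxGLPSound} \emph{is} an application of Item \ref{item:disjunctions:theorem:boxGLPSound} at the lower level $\beta\prec\alpha$ (and under a box, via the reflexive induction), not of closure under conjunctions; so Item \ref{item:disjunctions:theorem:boxGLPSound} must be carried along in the simultaneous induction rather than derived afterwards from Items \ref{item:distributivity:theorem:boxGLPSound}--\ref{item:conjunctions:theorem:boxGLPSound}. Also, ``pushing both consistency statements up to $\max(\xi,\xi')$ by monotonicity'' goes the wrong way: monotonicity sends consistency statements \emph{down}; equalising the levels uses Lemma \ref{theorem:basicGLPlemmas}.\ref{item:bigConsEquivSmallCons:theorem:basicGLPlemmas}, i.e.\ $\la\eta\ra\top\to\big(\la\beta\ra\varphi'\leftrightarrow\la\eta\ra\la\beta\ra\varphi'\big)$, which requires the soundness of $\glp$ at the lower levels, available from the inductive hypothesis both outside and inside the box.
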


\begin{proof}
If we wish to prove Item \ref{item:GLPsound:theorem:boxGLPSound}, we should prove the soundness of the rules and of the axioms.

As to the rules, the only rules of \glp are modus ponens and a necessitation rule for each modality: $\displaystyle \frac{\varphi}{\boxBox{\xi}_T\varphi}$. As pointed out in Lemma \ref{theorem:necessitationSoundForOneMunchhausenProvability} the soundness of the necessitation rules follows from necessitation for $\Box_T$ and by monotonicity, Lemma \ref{theorem:crossAxiomsSound}. As always, the soundness of modus ponens is immediate.

In the remainder of our proof we shall thus focus on the axioms. Since we proved the correctness of the negative introspection axioms -- axioms of the form $\la \beta \ra \varphi \to \bBox{\alpha} \la \beta \ra \varphi$ for $\beta < \alpha$--and of the monotonicity axioms --axioms of the form $\bBox{\beta} \varphi \to \bBox{\alpha} \varphi$ for $\beta < \alpha$-- without any induction in Lemma \ref{theorem:crossAxiomsSound} and since by Lemma \ref{theorem:noLoebNeeded} we may disregard L\"ob's axiom, we set out to prove the remaining axioms which are just the distribution and the transitivity axioms to complete a proof of Item \ref{item:GLPsound:theorem:boxGLPSound}. In other words, to complete the proof of Item \ref{item:GLPsound:theorem:boxGLPSound} we should prove Items \ref{item:distributivity:theorem:boxGLPSound} and \ref{item:transitivity:theorem:boxGLPSound}.

To prove that both items hold up to a certain level $\alpha<\Lambda$ we proceed by an internal transfinite reflexive induction on $\alpha$ as expressed in Lemma \ref{theorem:TransfiniteReflexiveInduction}. We need to prove both items simultaneously since they depend on each other. As a matter of fact, to get the proof going we will need to do some induction building and prove Items \ref{item:distributivity:theorem:boxGLPSound} -- \ref{item:transitivity:theorem:boxGLPSound} of the proof simultaneously by a transfinite reflexive induction on $\alpha$.

Thus, we will reason in $T$ and shall mostly omit the subscript $T$ and superscript $\Lambda$ in the remainder of this proof. The base case of the theorem is known to hold via the soundness of \gl and the FGH theorem. 

For the reflexive inductive step, we are to prove our four items (Items \ref{item:distributivity:theorem:boxGLPSound} -- \ref{item:transitivity:theorem:boxGLPSound}) at level $\alpha$ assuming that 
we have access to all four items at any level $\beta\prec\alpha$ and we also have these four items under a regular provability predicate $\Box_T$ at any level $\beta'\prec \alpha$. As we observed before, Item \ref{item:GLPsound:theorem:boxGLPSound} at level $\alpha$ (soundness of $\glp_\alpha$) follows directly from Items \ref{item:distributivity:theorem:boxGLPSound} -- \ref{item:transitivity:theorem:boxGLPSound} for levels $\beta\prec\alpha$. Thus, we may in our inductive step assume that we have access --and $T$-provably so-- to all $\glp_\alpha$ reasoning. Let us thus focus on the first item to prove:

{\bf Item \ref{item:conjunctions:theorem:boxGLPSound}}:
$ \forall \varphi\, \forall \psi \ \Big(  \bBox{\alpha}_T^{\Lambda}\varphi \wedge \bBox{\alpha}_T^{\Lambda}\psi \ \ \leftrightarrow \ \ \bBox{\alpha}_T^{\Lambda}(\varphi \wedge \psi)  \Big)$. We fix some $\varphi$ and $\psi$ and assume $\bBox{\alpha}\varphi$ and $\bBox{\alpha}\psi$. We consider two cases. In the easy case, we have that at least one of $\Box\varphi$ or $\Box \psi$ holds in which case the result directly follows from Lemma \ref{theorem:boxBoxExfalso}.\ref{item:BoxConjunctions:theorem:boxBoxExfalso}.
  
In the remaining case, by the recursion equation for $\bBox{\alpha}$, we find ordinals $\beta, \beta' <\alpha$ and some formulas $\varphi', \psi'$ so that $\bDiamond{\beta}\varphi'$, $\bDiamond{\beta'}\psi'$, $\Box \big(\bDiamond{\beta}\varphi' \to \varphi\big)$ and $\Box \big(\bDiamond{\beta'}\psi'\to \psi \big)$.

We first remark that w.l.o.g.~we may assume $\beta'=\beta$. For, if e.g.~$\beta'<\beta$, then by Lemma \ref{theorem:basicGLPlemmas}.\ref{item:bigConsEquivSmallCons:theorem:basicGLPlemmas} we see that $\bDiamond{\beta}\top \ \to \ \big ( \bDiamond{\beta'}\psi' \ \leftrightarrow \ \bDiamond{\beta}\bDiamond{\beta'}\psi'\big)$ with $\bDiamond{\beta}\varphi' \to \bDiamond{\beta}\top$. Since we perform a transfinite \emph{reflexive} induction, we also have our inductive hypotheses under a $\Box$ and in particular $\Box\big(\bDiamond{\beta}\bDiamond{\beta'}\psi' \to \bDiamond{\beta'}\psi' \big)$. Thus, we see that $\bDiamond{\beta}\bDiamond{\beta'}\psi' \wedge \Box\big(\bDiamond{\beta}\bDiamond{\beta'}\psi' \to \psi \big)$ whence 
\[
\exists \psi'' \ \Big(\bDiamond{\beta}\psi'' \wedge \Box\big(\bDiamond{\beta}\psi'' \to \psi \big) \Big).
\]

So, we assume $\beta'=\beta < \alpha$, and by the inductive hypothesis (on Item \ref{item:disjunctions:theorem:boxGLPSound}), we find $\chi$ with $\bDiamond{\beta}\chi \ \leftrightarrow \ \bDiamond{\beta} \varphi' \wedge \bDiamond{\beta}\psi'$ whence by the reflexive induction hypothesis also $\Box \big ( \bDiamond{\beta}\chi \ \leftrightarrow \ \bDiamond{\beta} \varphi' \wedge \bDiamond{\beta}\psi'\big )$. Consequently, we have that $\Box(\la \beta \ra \chi \to \varphi \wedge \psi)$ and we are done with the direction $[ \alpha] \varphi \wedge [ \alpha ] \psi \to [\alpha](\varphi \wedge \psi)$. The other direction follows directly from Lemma \ref{theorem:boxBoxExfalso} since $\Box \big( (\varphi \wedge \psi) \to \varphi \big)$ and $\Box \big( (\varphi \wedge \psi) \to \psi \big)$.

{\bf Item \ref{item:distributivity:theorem:boxGLPSound}}: $\forall \varphi \, \forall \psi \ \Big( \bBox{\alpha}_T^\Lambda(\varphi \to \psi) \to (\bBox{\alpha}_T^{\Lambda}\varphi \to \bBox{\alpha}_T^{\Lambda}\psi)\Big)$.
From the previous item we know that 
\[
[{\alpha}] (\varphi\to \psi) \wedge [{\alpha}]\varphi \ \leftrightarrow \ [{\alpha}]\Big ( (\varphi\to \psi) \wedge \varphi \Big)
\]
so that the result follows from Lemma \ref{theorem:boxBoxExfalso}.

{\bf Item \ref{item:disjunctions:theorem:boxGLPSound}}: $\forall \varphi\, \forall \psi \, \exists \chi \ \Big(\bBox{\alpha}_T^{\Lambda}\varphi \vee \bBox{\alpha}_T^{\Lambda}\psi \ \ \leftrightarrow \bBox{\alpha}_T^{\Lambda}\chi \Big)$.
We still reason in $T$ and assume that for some arbitrary $\varphi$ and $\psi$ we have $ [{\alpha}] \varphi$ or $ [{\alpha}] \psi$.  By Corollary \ref{theorem:FirstTwoPredicatesAreGood} we may assume that $\alpha \geq 2$ (observe that our assumption that $T$ proves transfinite $\Pi_2^0([\alpha]^\Lambda_T)$ induction, implies that certainly $T$ proves $\Sigma^0_1$ collection). Under this assumption we make a case distinction.

In case that $\nboxBox{\alpha} \bot$ we see by Lemma \ref{theorem:boxBoxExfalso} that for any formula $\chi$ we have $\nboxBox{\alpha}\chi \ \leftrightarrow \ \big( \nboxBox {\alpha} \varphi \vee \nboxBox {\alpha} \psi \big)$ so that equivalence certainly holds for the $\chi$ we propose in the alternative case.

That is, we consider the case that $\nboxDiamond{\alpha} \top$.  We claim that under this assumption, e.g.~$\nboxBox \alpha \varphi$ is equivalent to the single $\exists \, \beta {\prec} \alpha \, \exists \varphi' \ \big(\nboxDiamond {\beta} \varphi' \wedge \Box (\nboxDiamond{\beta} \varphi' \to \varphi) \big)$. But this is clear since by definition $\nboxBox \alpha \varphi$ is equivalent to 
\[
\Box \varphi \ \vee \ \exists \, \beta {\prec} \alpha \, \exists \varphi' \ \big(\nboxDiamond {\beta} \varphi' \wedge \Box (\nboxDiamond{\beta} \varphi' \to \varphi) \big)
\]
so we only need to see that the first disjunct $\Box \varphi$ implies the second. But since we work under the assumption that $\nboxDiamond \alpha \top$, in particular, we have $\nboxDiamond \beta \top$ for any ordinal $\beta \prec \alpha$. Moreover, for any such $\beta$ we have that $\Box \varphi \to \Box (\nboxDiamond \beta \top \to \varphi)$ so that the claim follows.

Using this observation, we find by unfolding the definition of 1-M\"unchhausen provability in $\nboxBox{\alpha} \varphi \vee \nboxBox{\alpha} \psi$ some formulas $\varphi'$ and $\psi'$ and some ordinals $\beta,\beta'<\alpha$  so that 
\begin{equation}\label{equation:disjunctionUnfoldedFirstStep}
\nboxDiamond {\beta} \varphi' \wedge \Box (\nboxDiamond{\beta} \varphi' \to \varphi)\ \mbox{ or } \ \nboxDiamond{\beta'} \psi' \wedge \Box (\nboxDiamond{\beta'} \psi' \to \psi).
\end{equation}

Since we work under the assumption that $\nboxDiamond{\alpha} \top$ holds with $\alpha \geq 2$, we certainly have $\nboxDiamond{\max{\{ \beta,\beta', 1 \}}}\top$ so that as before we may and will assume without loss of generality that $\beta'=\beta$ and $\beta\geq 1$.
Using the distributivity laws we see that \eqref{equation:disjunctionUnfoldedFirstStep} is equivalent to
\begin{equation}\label{equation:diamondFormulasInClosureUnderDisjunctionProof}
\Big ( \nboxDiamond{\beta} \varphi' \vee \nboxDiamond{\beta} \psi' \Big ) \ \wedge \  \Big( \nboxDiamond{\beta} \varphi' \vee\, \Box (\nboxDiamond{\beta} \psi' \to \psi) \Big) \ \wedge \  \Big( \nboxDiamond{\beta} \psi' \vee \, \Box (\nboxDiamond{\beta} \varphi' \to \varphi) \Big)
\end{equation}
and,
\begin{equation}\label{equation:boxFormulasInClosureUnderDisjunctionProof}
\Box (\nboxDiamond{\beta} \varphi' \to \varphi) \ \ \vee \ \ \Box (\nboxDiamond{\beta} \psi' \to \psi).
\end{equation}
By the reflexive induction hypotheses and by Lemma  \ref{theorem:basicGLPlemmas}.\ref{item:boxDisjunctionDiamond:theorem:basicGLPlemmas} --by the inductive hypothesis and Lemma \ref{theorem:GLPconservativelyExtendsFragments} we may use any $\glp_\beta$ reasoning-- 
we see that \eqref{equation:diamondFormulasInClosureUnderDisjunctionProof} can be written as a single diamond formula, say $\nboxDiamond{\beta} \chi'$. Thus, we would be done if we can find some formula $\chi$ so that 
\begin{equation}\label{equation:existsChiRequirementInClosureDisjunctionProof}
\Big ( \Box (\nboxDiamond{\beta} \varphi' \to \varphi) \ \ \vee \ \ \Box (\nboxDiamond{\beta} \psi' \to \psi)\Big ) \ \ \leftrightarrow \ \  \Box(\nboxDiamond{\beta}\chi' \to \chi).
\end{equation}
We will find such a $\chi$ by applying the FGH theorem with base theory $T+\la \beta \ra \chi'$. It thus remains to see that this theory $T+\la \beta \ra \chi'$ is consistent.

From $\nboxDiamond{\beta}\chi'$ we get by negative introspection (Lemma \ref{theorem:crossAxiomsSound}.\ref{item:negativeIntrospection:theorem:crossAxiomsSound}) that $\nboxBox{\alpha}\nboxDiamond{\beta}\chi'$. Recall that we work under the assumption that $\nboxDiamond{\alpha}\top$ so that by distributivity at level $\alpha$ --which is already known at this stage in our proof-- we get 
\[
\nboxDiamond{\alpha}\top \wedge \nboxBox{\alpha}\nboxDiamond{\beta}\chi' \to \nboxDiamond{\alpha}\nboxDiamond{\beta}\chi'
\]
whence by monotonicity we get $\Diamond \boxDiamond{\beta}\chi'$ whence $\Diamond_{T+ \nboxDiamond{\beta}\chi'} \top$. 

The existence of some $\chi$ so that 
\eqref{equation:existsChiRequirementInClosureDisjunctionProof} holds is now guaranteed by the (formalized) FGH theorem applied to the theory $T + \boxDiamond{\beta}\chi'$ since 
\[
\Box (\boxDiamond{n} \varphi' \to \varphi) \ \ \vee \ \ \Box (\boxDiamond{n} \psi' \to \psi) \in \Sigma^0_1.
\]

{\bf Item \ref{item:transitivity:theorem:boxGLPSound}}: $\forall \varphi\ \Big ( \bBox{\alpha}_T^{\Lambda} \varphi \to \bBox{\alpha}_T^{\Lambda} \bBox{\alpha}_T^{\Lambda}\varphi \Big )$.
While reasoning in $T$ we assume $\nboxBox{\alpha}\varphi$ and only consider the non-trivial case. Thus, for some $\varphi'$ and some $\beta\prec \alpha$ we get $\nboxDiamond {\beta}\varphi'$ and $\Box(\nboxDiamond {\beta}\varphi' \to \varphi)$. By negative introspection we get $\nboxBox{\alpha} \nboxDiamond{\beta}\varphi'$. Since $T$ is a 1-M\"unchhausen theory it proves some properties of the order $\prec$. In particular, from $\beta\prec\alpha$, we also get $\nboxBox{\alpha} (\beta\prec \alpha)$. From $\Box(\nboxDiamond {\beta}\varphi' \to \varphi)$ we obtain by applying successively provable $\Sigma^0_1$ completeness and monotonicity that $\nboxBox{\alpha}\Box(\nboxDiamond {\beta}\varphi' \to \varphi)$. Since we already proved closure of the $[\alpha]$ predicate under conjunctions, we can collect all the information under the $\nboxBox{\alpha}$ and applying Lemma \ref{theorem:crossAxiomsSound}.\ref{item:existsGoesInTheBox:theorem:boxBoxExfalso} we see that we have obtained $\nboxBox{\alpha}\nboxBox{\alpha}\varphi$.
\end{proof}

\section{Completeness of M\"unchhausen provability}\label{section:completeness}

In this section we shall prove that under some modest set of extra assumptions, we can obtain completeness of one-M\"unchhausen provability. Basically, this section consist of invoking a result from \cite{FernandezJoosten:2018:OmegaRuleInterpretationGLP} and recasting it in our context. Let us first recall some definitions and results.

\subsection{Uniform proof and provability predicates}
The definitions and results from this subsection all come from \cite{FernandezJoosten:2018:OmegaRuleInterpretationGLP} where an arithmetical completeness proof is given that is schematic in an abstract kind of provability predicates. A first step in defining these provability predicates consists of defining so-called \emph{$\Lambda$-uniform proof and provability predicates over $T$}.

\begin{definition}\label{UProv}
Let $T$ be representable and $\Lambda$ a linear order. Given a formula $\pi(c,\lambda,\phi)$, we introduce the notation $\provXc c\lambda \pi{\phi}=\pi(c,\lambda,\phi)$, as well as $\provx \lambda\pi\phi=\exists c \provXc c \lambda \pi{\phi}$. The dual notions $\consXc c\lambda\pi{\phi}$ and $\consx {\xi}\pi{\phi}$ are defined as $\neg \pi(c,\lambda,\neg \phi)$ and $\neg \exists c \provXc c \lambda \pi{\neg \phi}$ respectively.

A {\em $\Lambda$-uniform proof predicate over $T$} is a formula $\pi(c,\lambda,\phi)$ (with all free variables shown) satisfying
\begin{enumerate}
\item 
$T\vdash {\mathrm{I}\Sigma_{1}^0}(\pi)$;\label{UProv1}

\item 
$T\vdash\forall \lambda\forall\phi\ (\nc_T\phi\rightarrow \provx\lambda\pi\phi)$;\label{UProv2}

\item 
$T\vdash\forall \lambda\forall \phi\forall\psi \ \Big(\provx \lambda\pi(\psi\to\phi)\wedge\provx \lambda\pi\psi\rightarrow \provx \lambda\pi\phi\Big)$;\label{UProv3}

\item 
$T\vdash\forall c\, \forall \lambda \, \forall \xi{\leqLam}\lambda\, \forall \phi\ \Big(\provXc c\xi \pi{\phi}\rightarrow \provXc c\lambda \pi \phi\Big)$;\label{UProvNew}

\item 
$T\vdash\forall c\, \forall \lambda\, \forall \phi\ \Big(\provXc c\lambda\pi{\phi}\rightarrow \provx\lambda \pi \provXc{\dot c}{\dot\lambda}{\pi}{\dot\phi}\Big)$;\label{UProv4}

\item 
$T\vdash\forall c \forall \lambda\, \forall \phi\ \Big(\consXc c\lambda\pi{\phi}\rightarrow \provx\lambda \pi \consXc{\dot c}{\dot\lambda}{\pi}{\dot\phi}\Big)$;\label{UProv5}

\item 
$T\vdash\forall \lambda\, \forall\,  \xi{\leLam}\lambda\, \forall \phi\ \Big ( \consx {\xi}\pi{\phi}\rightarrow \provx{\lambda}\pi{{\consx {\dot\xi}{\pi}{\dot\phi}}}\Big)$.\label{UProv6}
\end{enumerate}

We say that $\pi$ is {\em sound}\footnote{Observe that for $\pi$ to be sound, we must have that $T$ itself was already sound.} if, moreover, $\mathbb N\models \forall \lambda\forall \phi\ (\provx\lambda\pi\phi\rightarrow \phi)$.

A formula $\hat\pi$ is a {\em $\Lambda$-uniform provability predicate} over $T$ if $T\vdash \hat\pi\leftrightarrow \exists c\ \pi$, where $\pi$ is a $\Lambda$-uniform proof predicate.
\end{definition}

Moreover, the provability predicates are required to require a modicum of good behaviour as captured in the following definition.

\begin{definition}
Let $\pi$ be a $\Lambda$-uniform proof predicate over a theory $T$. We say that $\pi$ is {\em normalized} if it is provable in $T$ that for every $\lambda$ we have that every $\lambda$-derivable formula has infinitely many $\lambda$-derivations and, whenever $\provXc c\lambda \pi\phi$ and $\provXc c\lambda \pi\psi$, it follows that $\phi=\psi$; in other words, every derivation must be a derivation of a single formula.
\end{definition}

Modal formulas are linked to arithmetical ones via an arithmetic interpretation.

\begin{definition}
An {\em arithmetic interpretation} is a function\footnote{By $\mathbb P$ we denote the set of propositional variables and by $\sentences$ we denote the set of $\Pi^1_\omega$ sentences.} $f:\mathbb P\to \sentences$.

If $\pi$ is a $\Lambda$-uniform proof predicate over $T$, we denote by $f_\pi$ the unique extension of $f$ such that $f_\pi(p)=f(p)$ for every propositional variable $p$, $f_\pi(\bot)=\bot$, $f_\pi$ commutes with Booleans and $f_\pi([\lambda]\phi)=[\overline\lambda]_\pi f_\pi(\phi)$.
\end{definition}

The following uniform completeness theorem is proven in \cite[Theorem 10.2]{FernandezJoosten:2018:OmegaRuleInterpretationGLP} and provides us with an easy way to prove completeness for our current interpretation.

\begin{theorem}\label{complete}
If $\Lambda$ is a computable linear order, $T$ is any sound, representable theory extending $\rca$, $\pi$ is a sound, normalized, $\Lambda$-uniform proof predicate over $T$ and $\phi$ is any $\mlang$-formula, ${\sf GLP}_\Lambda\vdash \phi$ if and only if, for every arithmetic interpretation $f$, $T\vdash f_\pi(\phi)$.
\end{theorem}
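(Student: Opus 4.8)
This is an arithmetical completeness theorem of Solovay type, so I would prove the two implications separately. For $\glp_\Lambda\vdash\phi\Rightarrow\forall f\ T\vdash f_\pi(\phi)$ I would induct on a $\glp_\Lambda$-derivation, checking that each axiom and rule survives the translation $f\mapsto f_\pi$ as a $T$-provable fact about the predicates $[\overline\lambda]_\pi$. Tautologies and modus ponens are immediate; Necessitation is the clause $\Box_T\psi\to\provx\lambda\pi\psi$ of Definition \ref{UProv}; distributivity is the corresponding distribution clause; the negative-introspection axioms $\langle\zeta\rangle\varphi\to\langle\xi\rangle\varphi$ come by contraposing the proof-monotonicity clause $\provXc c\xi\pi\varphi\to\provXc c\zeta\pi\varphi$; the monotonicity axioms $\langle\xi\rangle\varphi\to[\zeta]\langle\xi\rangle\varphi$ are the clause $\consx\xi\pi\varphi\to\provx\zeta\pi{\consx{\dot\xi}\pi{\dot\varphi}}$; transitivity follows from the proof-internalisation clause together with distributivity; and L\"ob at each fixed level $\lambda$ follows from Lemma \ref{theorem:noLoebNeeded}, with $\blacksquare:=[\overline\lambda]_\pi$ and $\Box:=\Box_T$ (which obeys $\gl$ since $T\supseteq\rca$). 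Notably, soundness of $\pi$ is not used here; it is needed only for the converse.

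For $\glp_\Lambda\nvdash\phi\Rightarrow\exists f\ T\nvdash f_\pi(\phi)$ I would run a Solovay construction. Since $\phi$ uses only finitely many modalities, say those indexed by $\lambda_1\leLam\cdots\leLam\lambda_k\leLam\Lambda$, Lemma \ref{theorem:GLPconservativelyExtendsFragments} lets me pass to the finite suborder they generate, so I may assume $\Lambda$ finite and fix a finite countermodel refuting $\phi$ at a designated world $r$ — drawn from a class of finite frames adequate for the relevant fragment (for instance by routing through the Kripke-complete auxiliary logic $\mathsf J$, since $\glp$ is itself Kripke-incomplete). Using the diagonal lemma together with the $[\overline\lambda]_\pi$'s I would build a primitive recursive Solovay function $s$ that starts at a fresh root and climbs this model, moving from a world $w$ along an $R_\lambda$-edge to a child $w'$ once it finds, at level $\lambda$, a proof that $s$ will never visit $w'$ — the needed internalisations of such non-provability assertions being supplied by the consistency clauses of Definition \ref{UProv}. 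I would then establish the standard Solovay package over $T$ (available because $T\supseteq\rca$): $s$ stabilises, $T$ proves the basic behaviour of $s$, and for each non-root $w$ the theory $T$ plus ``$s$ stabilises at $w$'' proves the reflection facts the truth lemma requires. Putting $f(p)$ equal to the disjunction of ``$s$ stabilises at $w$'' over worlds $w$ forcing $p$, the truth lemma yields that $f_\pi(\psi)$ is forced by precisely the worlds forcing $\psi$; at the world below $r$ at which $s$ in fact stabilises $\phi$ fails, so $\mathbb N\models\neg f_\pi(\phi)$, and soundness of $\pi$ then rules out $T\vdash f_\pi(\phi)$.

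The main obstacle is exactly the Kripke incompleteness of $\glp$: there is no single finite frame validating $\glp_\Lambda$, so the countermodel above must be manufactured within a suitable class of finite frames (or imported through an auxiliary Kripke-complete logic and transferred), and the climbing must be stratified by the ordinal parameter because the relations $R_\lambda$ for different $\lambda$ interact both in the frames and in the definition of $s$. Managing this layering while keeping $s$ primitive recursive and genuinely exploiting the uniformity of $\pi$ in $\lambda$ — and doing so for an arbitrary computable $\Lambda$ rather than a fixed finite one — is the technical core of \cite{FernandezJoosten:2018:OmegaRuleInterpretationGLP}, which the present section simply invokes. A minor but essential bookkeeping point is that each $f_\pi(\phi)$ must be a legitimate $\Pi^1_\omega$ sentence and that all of the Solovay apparatus must already be available over $\rca$, which is why the hypotheses ``$T$ sound, representable, extending $\rca$'' and ``$\pi$ sound and normalized'' appear in the statement.
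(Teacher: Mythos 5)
The paper itself offers no proof of Theorem \ref{complete}: it is imported wholesale as Theorem 10.2 of \cite{FernandezJoosten:2018:OmegaRuleInterpretationGLP}, so there is no in-paper argument to measure yours against, and your closing remark that the section ``simply invokes'' the reference is exactly right. As a reconstruction of the cited proof your sketch has the correct architecture: the soundness half is a routine induction on derivations in which each clause of Definition \ref{UProv} discharges the matching $\glp_\Lambda$ axiom (your clause-by-clause assignment is accurate, including getting L\"ob for free from Lemma \ref{theorem:noLoebNeeded} and observing that soundness of $\pi$ is not needed for this direction), and the completeness half is a stratified Solovay construction routed through the Kripke-complete companion logic \J because \glp itself is Kripke-incomplete.

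Two points in the completeness half are loose, though they are defects of the sketch rather than of the strategy. First, Lemma \ref{theorem:GLPconservativelyExtendsFragments} only concerns initial segments $\Lambda'<\Lambda$, not the arbitrary finite suborder generated by the modalities occurring in $\phi$; the reduction you want is the relabelling/conservativity result for finite sets of modalities, which is a separate (known) fact. Second, the endgame is garbled: the Solovay function stabilises in the real world at the fresh root $0$, which lies \emph{outside} the countermodel and to which the truth lemma does not apply, and $\phi$ fails at $r$, not at the stabilisation point; so one does not conclude $\mathbb N\models\neg f_\pi(\phi)$ directly. The standard conclusion instead uses that $T$ together with ``$s$ stabilises at $r$'' is consistent (this is where soundness of $\pi$ and of $T$ enter), combined with $T\vdash(\mbox{``$s$ stabilises at $r$''})\to\neg f_\pi(\phi)$ from the truth lemma, to rule out $T\vdash f_\pi(\phi)$. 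With those repairs your outline matches the proof the paper delegates to the reference.
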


\subsection{Arithmetical completeness for M\"unchhausen provability}

We can now combine the results from this paper and the previous subsection to see that under some extra conditions we obtain arithmetical completeness for one-M\"unchhausen provability.

\begin{theorem}[Arithmetical Completeness]\label{theorem:MunchhausenArithmeticalComplete}
Let $\Lambda$ be a computable linear order, $T$ is any sound, representable one-M\"unchhausen theory extending $\rca$ with corresponding provability predicate ${\nboxBox{\alpha}_T}^\Lambda \varphi$ so that $T\vdash {\mathrm{I}\Sigma_{1}^0}({\nboxBox{\alpha}_T}^\Lambda \varphi)$. We then have that ${\nboxBox{\alpha}_T}^\Lambda \varphi$ is a uniform provability predicate and in particular,
\[
\glp_\Lambda \vdash \varphi \ \ \Longleftrightarrow \ \ \forall * \ T \vdash \varphi^*.
\]
\end{theorem}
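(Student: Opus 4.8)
The plan is to derive the theorem from the uniform completeness theorem, Theorem~\ref{complete}. It suffices to exhibit a sound, normalized, $\Lambda$-uniform proof predicate $\pi$ over $T$ whose associated uniform provability predicate $\hat\pi$ is $T$-provably equivalent to $[\alpha]^\Lambda_T\varphi$. Once that is in place, $[\alpha]^\Lambda_T\varphi$ is a uniform provability predicate by the final clause of Definition~\ref{UProv}; moreover $f_\pi$ then agrees with the arithmetical interpretation $*$ up to $T$-provable equivalence (on modalities $T\vdash[\overline\lambda]_\pi\chi\leftrightarrow[\overline\lambda]^\Lambda_T\chi$, and $f_\pi([\lambda]\phi)=[\overline\lambda]_\pi f_\pi(\phi)$), so Theorem~\ref{complete} gives exactly $\glp_\Lambda\vdash\varphi$ iff $\forall *\ T\vdash\varphi^*$.

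The proof predicate I would use reads off the recursion \eqref{equation:ramifiedProvability} directly. Let $\pi(c,\lambda,\phi)$ hold precisely when $c$ codes one of two kinds of object: either $c=\langle 0,p\rangle$ with $p$ a $T$-proof of $\phi$, or $c=\langle 1,\xi,\psi,q\rangle$ with $\xi\prec\lambda$, with $q$ a $T$-proof of $\la\xi\ra^\Lambda_T\psi\to\phi$, and, as a side condition, with $\la\xi\ra^\Lambda_T\psi$. Unfolding the quantifier $\exists c$ and comparing with \eqref{equation:ramifiedProvability} shows $T\vdash\exists c\,\pi(c,\lambda,\phi)\leftrightarrow[\lambda]^\Lambda_T\phi$, so we may take $\hat\pi$ to be $[\alpha]^\Lambda_T\varphi$. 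The coding should tag the two cases disjointly and be arranged so that the last component ($p$, respectively $q$) pins down the formula proved; this makes $\pi$ normalized, the ``infinitely many $\lambda$-derivations'' clause being witnessed by padding $T$-proofs, using that $T$ proves every provable formula has infinitely many proofs.

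It then remains to verify the seven conditions of Definition~\ref{UProv} together with soundness. Condition~(1), $T\vdash\mathrm{I}\Sigma^0_1(\pi)$, is immediate from the hypothesis $T\vdash\mathrm{I}\Sigma^0_1([\alpha]^\Lambda_T\varphi)$, since unfolding the definition of $\pi$ turns every $\Sigma^0_1(\pi)$ formula into a $\Sigma^0_1([\alpha]^\Lambda_T\varphi)$ one. Condition~(2) (necessitation) is immediate from the definition of $\pi$, taking $c=\langle 0,p\rangle$, and condition~(4) (monotonicity in the ordinal, $\provXc{c}{\xi}{\pi}\phi\to\provXc{c}{\lambda}{\pi}\phi$ for $\xi\leqLam\lambda$) is immediate from the $T$-provable transitivity of $\prec$, as the same code $c$ witnesses the larger level. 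Condition~(3), distributivity of $\provx{\lambda}{\pi}$, is precisely Theorem~\ref{theorem:boxGLPSound}.\ref{item:distributivity:theorem:boxGLPSound}; this is the point at which one needs $T$ to prove transfinite $\Pi^0_2([\alpha]^\Lambda_T)$ induction, so that assumption of Section~\ref{section:munchhausenSound} is in force here as well. For condition~(5), split on the form of $c$: the $T$-proof case is ordinary provable $\Sigma^0_1$-completeness, and the tuple case combines the Münchhausen-theory clause $(\xi\prec\lambda)\to[\lambda]^\Lambda_T(\xi\prec\lambda)$, provable $\Sigma^0_1$-completeness applied to $\mathrm{Prf}_T(q,\dots)$, negative introspection (Lemma~\ref{theorem:crossAxiomsSound}.\ref{item:negativeIntrospection:theorem:crossAxiomsSound}) to put $\la\xi\ra^\Lambda_T\psi$ under $[\lambda]^\Lambda_T$, and closure under conjunction (Theorem~\ref{theorem:boxGLPSound}.\ref{item:conjunctions:theorem:boxGLPSound}) to collect the conjuncts. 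Condition~(7) is literally negative introspection, since $\consx{\xi}{\pi}\phi$ unfolds to $\la\xi\ra^\Lambda_T\phi$. Condition~(6) reduces, after pushing negations inward and disposing of the decidable conjuncts by provable $\Delta^0_0$-completeness (using that $\prec$ is a computable, hence provably decidable, order), to promoting $[\xi]^\Lambda_T\chi$ with $\xi\prec\lambda$ to $[\lambda]^\Lambda_T[\xi]^\Lambda_T\chi$, which follows from transitivity (Theorem~\ref{theorem:boxGLPSound}.\ref{item:transitivity:theorem:boxGLPSound}) and monotonicity (Lemma~\ref{theorem:crossAxiomsSound}.\ref{item:monotonicity:theorem:crossAxiomsSound}) together with Lemma~\ref{theorem:boxBoxExfalso}. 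Finally, soundness of $\pi$ is the statement $\mathbb N\models\forall\lambda\,\forall\phi\,([\lambda]^\Lambda_T\phi\to\phi)$, which follows from the soundness of $T$ by the very case analysis in the proof of Proposition~\ref{theorem:MHconsistencyStatementsAreTrue} and uses no induction. With $\pi$ now a sound, normalized, $\Lambda$-uniform proof predicate over $T$, Theorem~\ref{complete} applies and finishes the proof.

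The step I expect to be the crux is the verification of conditions~(3), (5) and (6): the predicate $\pi$ is not an elementary proof predicate, because its derivations carry a genuine consistency-statement side condition, so these clauses cannot be obtained from bare provable completeness and instead rest on the full arithmetical soundness of $\glp_\Lambda$ from Section~\ref{section:munchhausenSound} (distributivity, closure under conjunction, transitivity), and hence on the transfinite-induction assumption on $T$. A secondary, essentially bookkeeping, point is arranging the coding of derivations so that $\pi$ comes out normalized.
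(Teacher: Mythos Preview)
Your proposal is correct and follows essentially the same approach as the paper: you construct the same proof predicate $\pi$ (tagged pairs encoding either a plain $T$-proof or a triple $\langle\xi,\psi,q\rangle$ with the side condition $\la\xi\ra^\Lambda_T\psi$), verify it is a sound normalized $\Lambda$-uniform proof predicate, and invoke Theorem~\ref{complete}. The paper is terser in dispatching conditions (5) and (6), simply attributing them to the definition of $\pi$ together with the arithmetical soundness results of Section~\ref{section:munchhausenSound}, whereas you spell out the case analysis; but the argument is the same.
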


\begin{proof}
As always, the $*$ in the statement of the theorem is understood to range over arithmetical interpretations that map propositional variables to arbitrary sentences, so that $*$ commutes with the boolean connectives and each modal formula $[\alpha] \psi$ is mapped to ${{\nboxBox{\overline \alpha}_T}}^\Lambda \psi^*$.

From our provability predicate (omitting sub and superscripts) $\nboxBox \alpha \varphi$ we will define a proof predicate $\pi (c,\lambda,\phi)$ for which we will observe that over $T$ it is a normalized uniform proof predicate so that provably $\exists c\ \pi (c,\lambda,\phi) \ \leftrightarrow \ \nboxBox \lambda \phi$. To this end we define
\[
\pi (c,\lambda,\phi) \ := \ c=\la c_0, c_1\ra \wedge \left \{ 
\begin{array}{lcll}
\Big ( c_0 =0 &\wedge& {\sf Proof}_T(c_1, \phi) \Big) & \vee\\
 \Bigg( c_0 =1 &\wedge & c_1 = \la \xi, \psi, p\ra \ \wedge \ \xi \prec \lambda \ \ \ \wedge & \\
 & & \boxDiamond \xi \psi \wedge {\sf Proof}_T(p, \boxDiamond \xi \psi \to \phi)\Bigg).\\
\end{array}
\right . 
\]
It is straightforward to see that, indeed, $T \vdash \exists c\ \pi (c,\lambda,\phi) \ \leftrightarrow \ \nboxBox \lambda \phi$. Since ${\sf Proof}_T$ is a normalized proof predicate, so is $\pi$. Thus, we should only check Properties 1 -- 7 from Definition \ref{UProv}. Property 1 is one of the assumptions of the theorem and Properties 2, 3 and 7 follow directly from the arithmetical soundness of one-M\"unchhausen provability. Property 4 follows since $T$ is a one-M\"unchhausen theory whence proves transitivity of $\prec$. Properties 5 and 6 are a direct consequence of the definition of $\pi$ and the soundness of the one-M\"unchhausen provability predicate.

\end{proof}

\section{Some notes on the Formalisation of one-M\"unchhausen provability}\label{section:formalisation}

Throughout this paper we have been talking about M\"unchhausen provability predicates and proving all sorts of properties of them. The reserved reader may now question whether there exist one-M\"unchhausen theories with corresponding one-M\"unchhausen provability predicates at all. In this section we sketch how to formalize a M\"unchhausen provability predicate in second order arithmetic.

Just as in \cite{FernandezJoosten:2018:OmegaRuleInterpretationGLP} we start our formalization by reserving a set parameter $X$ where we will collect all the pairs $\la \alpha, \varphi \ra$ of ordinals $\alpha$ and formulas $\varphi$ so that $[\alpha] \varphi$ holds. Next, we will write down a predicate that all and only the correct pairs $\la \alpha, \varphi \ra$ are in $X$. Thus, we write the recursion for one-M\"unchhausen provability replacing every occurrence of $[ \alpha ] \varphi$ by $\la \alpha , \varphi \ra \in X$ and consequently replacing $\la \alpha \ra \varphi$ by $\la \alpha, \neg \varphi\ra \notin X$. We define any set satisfying our predicate to be an  $1{-}\sf IMC$ for \emph{Iterated one-M\"unchhausen Class}.

By naively doing so, a problem arises namely that we get occurrences of the set variable $X$ under the regular provability predicate $\Box_T$. By using numerals we can speak under a box about numbers that `live outside the box'. However, we do not have any syntactical artefact to denote arbitrary sets. A possibly way out here would be to resort to \emph{oracle-provability} as introduced in \cite{CordonFernandezJoostenLara:2017:PredicativityThroughTransfiniteReflection}. Thus, for one-M\"unchhausen provability, the predicate would look something like:
\[
\begin{array}{lll}
1{-}{\sf IMC}(X,\alpha):= & & \\
\ \ \ \forall \, \xi{\leq}\alpha\, \forall \varphi \Big[
\la \xi, \varphi \ra \in X   &\leftrightarrow & \Big( \Box_T \varphi \, \vee \, \exists \psi \, \exists \, \zeta{<}\xi \ \big( \, \la \zeta, \neg \psi \ra \notin X \ \wedge \\
 & & \ \ \ \ \ \ \  \Box_{T | X } (\la \zeta, \neg \psi \ra \notin X \to \varphi) \big) \Big)\Big].
\end{array}
\]
With such a predicate we can then define:
\[
[\alpha]_{T,1} \varphi := \forall X \Big( 1{-}{\sf IMC}(X,\alpha) \to \la \alpha, \varphi \ra \in X\Big).
\]
However, it is not clear if such a predicate will satisfy the required recursive equation since the relation between oracle provability and regular provability is not yet entirely understood in all its details.

For these and other reasons we choose a different approach. We will anticipate that hopefully/probably the $1{-}\sf IMC$ predicate will define a unique set. Then, under the box we can just use any set that satisfies ${\sf IMC}(X)$. Of course, the fixpoint theorem allows us to do so.  In the formalisation of M\"unchhausen provability we will closely follow \cite{FernandezJoosten:2018:OmegaRuleInterpretationGLP}. As such we allow ourselves to be rather sketchy and refer to \cite{FernandezJoosten:2018:OmegaRuleInterpretationGLP} for the details.

\begin{definition}
We define the predicate $1{-}{\sf IMC}(X,\gamma)$ using the fixpoint theorem so that it satisfies (provably in \eca) the following recursion.
\[
\begin{array}{lll}
1{-}{\sf IMC}&\hspace{-.2cm}(X,\gamma) \ \longleftrightarrow& \\
 &  \Big( \forall\, \alpha {\preceq}\gamma\ \forall \varphi\ \Big[ & \la \alpha , \varphi\ra \in X \ \leftrightarrow \\
 & & \Box_U \varphi \vee \exists\, \beta{\prec}\alpha \exists \psi \Big( \la \beta, \neg \psi \ra\notin X \wedge \\
 & & \ \ \ \Box_U \big[  \exists X (
 1{-}{\sf IMC} (X,\dot \beta) \wedge \la\dot \beta, \neg \dot \psi \ra \notin X) \to \varphi\big]\Big)\Big]\Big)
\end{array}
\]
With this Iterated one-M\"unchhausen Class predicate we define our one-M\"unchhausen predicate as
\[
[\alpha]_U \varphi := \forall X \Big( 1{-}{\sf IMC}(X,\alpha) \to \la \alpha, \varphi \ra \in X\Big).
\]
\end{definition}
It is clear that our definition supposes that we fix an ordinal notation system for some ordinal $\Lambda$ and that all our ordinal quantifications are restricted to this $\Lambda$. 
We observe that 
\[
\la\alpha\ra \varphi := \exists X \Big( 1{-}{\sf IMC}(X,\alpha) \wedge \la \alpha, \neg \varphi \ra \notin X\Big).
\]
Consequently we can rewrite the defining recursion for Iterated one-M\"unchhausen Classes as
\[
\begin{array}{lll}
1{-}{\sf IMC}&\hspace{-.2cm}(X,\gamma) \ \longleftrightarrow& \\
 &  \Big( \forall\, \alpha {\preceq}\gamma\ \forall \varphi\ \Big[ & \la \alpha , \varphi\ra \in X \ \leftrightarrow \\
 & & \Box_U \varphi \vee \exists\, \beta{\prec}\alpha \exists \psi \big( \la \beta, \neg \psi \ra\notin X \wedge \\
 & & \ \ \ \Box_U \big[ \la \beta \ra \psi \to \varphi\big]\big)\Big]\Big).
\end{array}
\]
It is clear that $1{-}{\sf IMC}$ depends on the base theory $U$ and on the ordinal representation $\Lambda$ but for the sake of readability we suppress these dependencies in our notation. We remark that $1{-}{\sf IMC} (X,\gamma)$ is of complexity $\Pi^0_2$ with free set variable $X$. Our predicate $[\alpha]\varphi$ has a universal quantifier ranging over all sets that are iterated M\"unchhausen classes. Of course, we would hope that indeed such classes are uniquely defined if they exists at all.

In order to express this, we will fix the following notation
\[
X\equiv_\alpha Y \ \ := \ \ \forall \, \beta{\preceq}\alpha\, \forall  \varphi \Big( \la \beta, \varphi \ra \in X \ \longleftrightarrow \ \la \beta, \varphi \ra \in Y  \Big),
\]
and
\[
\exists^{\leq 1} X \ {\sf IMC}(X,\alpha) \ \ := \ \ \forall X\, \forall Y\ \Big( {\sf IMC} (X,\alpha) \wedge {\sf IMC} (Y,\alpha) \ \longrightarrow \ X\equiv_\alpha Y\Big).
\]
We can now state and prove a key ingredient in proving that our formalisation satisfies the defining recursion for M\"unchhausen provability.

\begin{lemma}\label{theorem:uniquenessIMCs}
Let $U$ be a theory extending \eca. We have that
\[
\AcaNaught + {\sf wo}(\alpha) \vdash \forall \alpha \, \exists^{\leq 1} X \ {\sf IMC}(X,\alpha).
\]
\end{lemma}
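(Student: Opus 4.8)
The plan is to prove the uniqueness statement by transfinite induction on $\alpha$ along the well-order $\prec$, which is exactly why we need the hypothesis ${\sf wo}(\alpha)$ (i.e.\ transfinite induction over the representation of $\Lambda$) on top of $\AcaNaught$. So I would first fix two sets $X$ and $Y$ with ${\sf IMC}(X,\alpha)$ and ${\sf IMC}(Y,\alpha)$, and aim to show $X\equiv_\alpha Y$, i.e.\ $\la\beta,\varphi\ra\in X\leftrightarrow\la\beta,\varphi\ra\in Y$ for all $\beta\preceq\alpha$ and all $\varphi$. The induction hypothesis gives me, for every $\beta\prec\alpha$, that any two ${\sf IMC}$'s agree up to $\beta$; in fact from this I want to extract the stronger-looking but equivalent fact that $X$ and $Y$ themselves already agree strictly below $\alpha$: if ${\sf IMC}(X,\alpha)$ then certainly ${\sf IMC}(X,\beta)$ for $\beta\prec\alpha$ (the recursion is downward absolute in the ordinal parameter since it only quantifies $\alpha'\preceq\gamma$), so the IH applied to the pair $(X,Y)$ at level $\beta$ yields $X\equiv_\beta Y$ for every $\beta\prec\alpha$.

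Next I would unfold the defining recursion for ${\sf IMC}(X,\alpha)$ and ${\sf IMC}(Y,\alpha)$ at the level $\alpha$ itself. For a fixed formula $\varphi$, membership $\la\alpha,\varphi\ra\in X$ is equivalent to
\[
\Box_U\varphi \ \vee\ \exists\,\beta{\prec}\alpha\,\exists\psi\,\Big(\la\beta,\neg\psi\ra\notin X\ \wedge\ \Box_U\big[\,\la\beta\ra\psi\to\varphi\,\big]\Big),
\]
and similarly for $Y$ with $X$ replaced by $Y$ in the membership clause. The first disjunct $\Box_U\varphi$ is identical on both sides. For the second disjunct, the only occurrences of the set parameter are $\la\beta,\neg\psi\ra\notin X$ (respectively $\notin Y$) with $\beta\prec\alpha$, and the term $\la\beta\ra\psi$ which, by its definition $\exists Z(1{-}{\sf IMC}(Z,\beta)\wedge\la\beta,\neg\psi\ra\notin Z)$, does not mention $X$ or $Y$ at all. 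Since by the previous paragraph $X\equiv_\beta Y$ for every $\beta\prec\alpha$, we get $\la\beta,\neg\psi\ra\notin X\leftrightarrow\la\beta,\neg\psi\ra\notin Y$ for every such $\beta$, so the two right-hand sides are provably equivalent, giving $\la\alpha,\varphi\ra\in X\leftrightarrow\la\alpha,\varphi\ra\in Y$. Combined with $X\equiv_\beta Y$ for $\beta\prec\alpha$ this is precisely $X\equiv_\alpha Y$, completing the induction step.

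The main point to be careful about — and the only real obstacle — is the interplay between the quantifier complexity of the matrix being inducted on and the amount of induction actually available. The statement $\forall X\forall Y({\sf IMC}(X,\alpha)\wedge{\sf IMC}(Y,\alpha)\to X\equiv_\alpha Y)$, with ${\sf IMC}$ of complexity $\Pi^0_2$ with a set parameter, is a $\Pi^1_1$ formula in $\alpha$, so I need transfinite induction for $\Pi^1_1$ formulas over $\prec$; this is exactly what is packaged into the hypothesis ${\sf wo}(\alpha)$ (well-ordering of $\Lambda$ in the strong, $\Pi^1_1$-induction sense) over $\AcaNaught$, and I would state explicitly that this is the instance of transfinite induction being invoked. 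A secondary routine point is the downward absoluteness of ${\sf IMC}$ in its ordinal argument and the fact that $\la\beta\ra\psi$ as defined is genuinely parameter-free in $X,Y$; both are immediate from inspecting the displayed recursion, but worth a sentence so the reader sees why the set parameter drops out of the critical clause. Beyond that the argument is a direct unfolding of the fixpoint equation.
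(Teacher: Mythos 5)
Your induction and its step are essentially the paper's: fix $X$ and $Y$ satisfying ${\sf IMC}(\cdot,\alpha)$, use the inductive hypothesis to get agreement strictly below $\alpha$, and observe that in the defining recursion at level $\alpha$ the set parameter occurs only via $\la \beta,\neg\psi\ra\notin X$ for $\beta\prec\alpha$, while the oracle $\la\beta\ra\psi$ under the box is parameter-free. The genuine problem is in your last paragraph, where you prepend $\forall X\forall Y$ to the induction formula, declare it $\Pi^1_1$, and then claim that ${\sf wo}(\alpha)$ over \AcaNaught supplies transfinite induction for $\Pi^1_1$ formulas. It does not: $\Pi^1_1$ transfinite induction is bar induction, which goes far beyond \AcaNaught, and the paper is explicitly \emph{not} assuming it --- indeed the point of Theorem \ref{TheoUnique}, which this lemma feeds, is precisely that full second-order transfinite induction is unavailable and only a restricted form can be recovered \emph{from} uniqueness. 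Read literally, your justification would make the lemma unprovable in the stated theory.

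The repair is the one observation the paper's proof actually consists of: keep $X$ and $Y$ as \emph{free} set variables and perform transfinite induction on the formula ${\sf IMC}(X,\alpha)\wedge{\sf IMC}(Y,\alpha)\to X\equiv_\alpha Y$, which is arithmetical (with set parameters). For an arithmetical $\theta(\beta)$, \AcaNaught proves transfinite induction along $\prec$ from ${\sf wo}(\alpha)$ by forming $\{\beta : \neg\theta(\beta)\}$ via arithmetical comprehension and extracting a $\prec$-least element; it is exactly this comprehension instance that fails for a $\Pi^1_1$ matrix. Your argument is already in the required form, since you fix $X$ and $Y$ at the outset and only ever apply the inductive hypothesis to these same two sets; the quantifiers over $X$, $Y$ and $\alpha$ are then added by generalization after the induction. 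With that correction your proof coincides with the paper's.
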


\begin{proof}
We prove by transfinite induction that ${\sf IMC} (X,\alpha) \wedge {\sf IMC} (Y,\alpha) \to  X\equiv_\alpha Y$ where $X$ and $Y$ are unbounded set variables. Note that this is an arithmetical formula so that 
 \AcaNaught can prove transfinite induction up to $\alpha$ for this formula since we assumed ${\sf wo}(\alpha)$.
\end{proof}

Now that we have uniqueness we proceed as in \cite[ Theorem 4.3]{FernandezJoosten:2018:OmegaRuleInterpretationGLP} to observe that we actually may perform transfinite induction for second order formulas as long as the second order formulas are restricted to the ${\sf IMC}$s.

\begin{theorem}\label{TheoUnique} Given a formula $\theta(X)\in {\bm \Pi}^1_\omega$,
\[
\base\vdash \forall \Lambda \ \Big( \exists^{\leq 1} X\ \theta(X)\wedge{\tt wo}(\Lambda)\rightarrow {\tt TI}(\Lambda,\GuardLang\theta)\Big).\]
\end{theorem}

We are now ready to prove that our formalisation satisfies the required recursion.

\begin{theorem}\label{theorem:RecursionFormalized}
Let $T$ be any presentable theory extending \eca. We have
\[
\begin{array}{lr}
\AcaNaught + {\sf wo}(\beta) + \exists X 1{-}{\sf IMC}(X, \beta) \vdash & \forall \, \alpha{\preceq}\beta \ \Big[
\nboxBox{\alpha}_T \varphi \ \leftrightarrow \ \Box_T \varphi \vee \exists \psi \, \exists \, \gamma \Big( \gamma \prec \alpha \wedge \ \\
  &  \
  \nboxDiamond{\gamma}_T \psi \, 
\wedge \ \ \ \\
&  \Box_T\big(  \nboxDiamond{\gamma}_T \psi \, \to \, \varphi \big) \Big) \Big]. 
\end{array}
\]
\end{theorem}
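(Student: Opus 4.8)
The plan is to unfold the fixpoint definition of $\nboxBox{\alpha}_T\varphi$ and $\nboxDiamond{\gamma}_T\psi$, substitute the recursion for $1{-}{\sf IMC}$, and show that the quantification over witnessing classes collapses to a single equivalence once uniqueness is available. Concretely, fix a model of $\AcaNaught + {\sf wo}(\beta) + \exists X\, 1{-}{\sf IMC}(X,\beta)$; by Lemma~\ref{theorem:uniquenessIMCs} (whose hypotheses are met since $\base$ proves ${\sf wo}(\beta)$ here) there is, up to $\equiv_\beta$, a \emph{unique} $X$ with ${\sf IMC}(X,\beta)$. First I would observe that for this $X$ and any $\alpha\preceq\beta$ we have $\nboxBox{\alpha}_T\varphi \leftrightarrow \la\alpha,\varphi\ra\in X$ and $\nboxDiamond{\gamma}_T\psi\leftrightarrow \la\gamma,\neg\psi\ra\notin X$ for $\gamma\preceq\alpha$: the left-to-right direction of the first is the defining universal quantifier instantiated at $X$, and the right-to-left direction uses that \emph{any} witnessing class agrees with $X$ on pairs with first coordinate $\preceq\beta$, by the uniqueness lemma. (Here one uses that $\alpha\preceq\beta$ together with $\gamma\prec\alpha$ gives $\gamma\preceq\beta$, so the relevant restrictions of any two ${\sf IMC}$s coincide.)

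Next I would plug these two equivalences into the recursion clause for $1{-}{\sf IMC}(X,\beta)$ itself, which for each $\alpha\preceq\beta$ reads
\[
\la\alpha,\varphi\ra\in X \ \leftrightarrow\ \Box_U\varphi \vee \exists\,\beta'{\prec}\alpha\,\exists\psi\,\big(\la\beta',\neg\psi\ra\notin X \wedge \Box_U[\la\beta'\ra\psi \to \varphi]\big),
\]
using the already-rewritten form of the recursion from the excerpt (with $\la\beta'\ra\psi$ abbreviating $\exists X(1{-}{\sf IMC}(X,\dot\beta')\wedge\la\dot\beta',\neg\dot\psi\ra\notin X)$). Replacing $\la\alpha,\varphi\ra\in X$ by $\nboxBox{\alpha}_T\varphi$, $\la\beta',\neg\psi\ra\notin X$ by $\nboxDiamond{\beta'}_T\psi$, and noting $\Box_U=\Box_T$ for the presentable $T$ in question, yields exactly the asserted equivalence. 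The only subtlety is that the oracle sentence $\la\beta'\ra\psi$ occurring \emph{inside} $\Box_U[\cdots]$ is, as a formula, literally $\nboxDiamond{\beta'}_T\psi$ — it is the \emph{same} arithmetical sentence — so no manipulation under the box is needed; this is precisely why the rewritten recursion in the excerpt is stated the way it is.

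**Main obstacle.** I expect the delicate point to be the right-to-left direction of $\nboxBox{\alpha}_T\varphi \leftrightarrow \la\alpha,\varphi\ra\in X$, i.e.\ concluding $\nboxBox{\alpha}_T\varphi = \forall Y(1{-}{\sf IMC}(Y,\alpha)\to\la\alpha,\varphi\ra\in Y)$ from $\la\alpha,\varphi\ra\in X$ where $X$ witnesses ${\sf IMC}(X,\beta)$. One must know that every $Y$ with ${\sf IMC}(Y,\alpha)$ satisfies $\la\alpha,\varphi\ra\in Y \leftrightarrow \la\alpha,\varphi\ra\in X$; this needs the uniqueness statement at level $\alpha$ \emph{relative to} the fixed $X$ at level $\beta$, which one gets from Lemma~\ref{theorem:uniquenessIMCs} (at level $\beta$, hence a fortiori agreement on all pairs with first coordinate $\preceq\beta\succeq\alpha$) — provided we also know ${\sf IMC}(X,\beta)$ implies ${\sf IMC}(X,\alpha)$ up to $\equiv_\alpha$, which is immediate from the defining recursion since restricting the outer $\forall\alpha'{\preceq}\beta$ to $\forall\alpha'{\preceq}\alpha$ is weaker. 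So the whole argument is really an exercise in carefully tracking the scope of the ordinal restrictions in ${\sf IMC}$ and invoking uniqueness (Lemma~\ref{theorem:uniquenessIMCs}) at the right level; no genuinely new idea beyond what Lemma~\ref{theorem:uniquenessIMCs} and Theorem~\ref{TheoUnique} already provide is required, and in particular the transfinite-induction burden is entirely absorbed into the hypothesis ${\sf wo}(\beta)$.
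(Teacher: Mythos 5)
Your proof is correct, but it is organised differently from the paper's. The paper establishes the equivalence by a transfinite induction on $\alpha$ (following the template of \cite{FernandezJoosten:2018:OmegaRuleInterpretationGLP}) and invokes Theorem \ref{TheoUnique} precisely to license that induction for formulas mentioning the second-order $1{-}{\sf IMC}$ predicate. You avoid any fresh induction: you fix the (by Lemma \ref{theorem:uniquenessIMCs} essentially unique) witness $X$ of $1{-}{\sf IMC}(X,\beta)$, observe that $1{-}{\sf IMC}(X,\beta)$ trivially yields $1{-}{\sf IMC}(X,\alpha)$ for $\alpha\preceq\beta$ because the inner clause of the recursion does not depend on the outer bound, and then translate $\nboxBox{\alpha}_T\varphi$ and $\nboxDiamond{\gamma}_T\psi$ into membership statements about $X$ --- instantiation at $X$ for one direction, uniqueness at level $\alpha$ (resp.\ $\gamma$, both inheriting well-foundedness from ${\sf wo}(\beta)$) for the other. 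After that the theorem is literally the $\alpha$-instance of the recursion clause for $X$, and, as you rightly stress, the oracle sentence under $\Box_T$ is syntactically the formula $\nboxDiamond{\gamma}_T\psi$, so nothing has to be verified inside the box. What this buys is a more elementary argument whose only transfinite induction is the arithmetical one already spent in Lemma \ref{theorem:uniquenessIMCs}; Theorem \ref{TheoUnique} is not needed at all on your route, whereas the paper's inductive template is the more robust one if an induction hypothesis ever had to be carried through the levels. Two small wording points: it is the hypothesis ${\sf wo}(\beta)$, not \base itself, that supplies well-foundedness to Lemma \ref{theorem:uniquenessIMCs}; and in the case $\Box_T\varphi$ of the right-to-left direction you do not even need uniqueness, since every $Y$ with $1{-}{\sf IMC}(Y,\alpha)$ contains $\la\alpha,\varphi\ra$ directly by its own recursion clause.
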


\begin{proof}
By transfinite induction on $\alpha$ as in \cite{FernandezJoosten:2018:OmegaRuleInterpretationGLP}. Note that we need the existence of a $1{-}{\sf IMC}$ for the $\to$ direction. By Theorem \ref{TheoUnique} we have access to the transfinite induction in \AcaNaught since we proved uniqueness for $1{-}\sf IMC$'s.
\end{proof}

\section{Weakening the base theory: M\"unchhausen provability}\label{section:MunchhausenProvabilityMultipleOracle}

In this paper we have introduced the notion of one-M\"unchhausen provability for which we have proven arithmetical sound and completeness. Furthermore, we have shown in Theorem \ref{theorem:RecursionFormalized} that the notion can be formalised in second order arithmetic. However, the theory where the formalisation takes place is quite strong. In particular, it requires a fair amount of transfinite induction. As pointed out, this prove theoretic strength is consequently also required in the object theory which is not desirable. Via various tricks, one can lower the required proof theoretic strength of the object and meta-theory. A first step in doing so is via the introduction of \emph{M\"unchhausen provability}. Further tricks are presented and worked out in \cite{Joosten:2019:TransfiniteTuringjumps}.

To define M\"unchhausen provability we will start out with a very similar but slightly different recursion equivalence:
\begin{multline}\label{equation:FullVectorBox}
\vboxBox{\alpha}_T \varphi \ \leftrightarrow \ \Box_T \varphi \vee \exists \sigma \, \exists \, \tau \ \Big( \ |\sigma | = |\tau | \ \wedge \ \forall 
\, i{<}| \tau |\, \tau_i {\prec}\alpha \ \wedge \  \forall \, i{<}| \sigma |\, \vboxDiamond{\tau_i}_T \sigma(i) \, \ \ \ \ \ \ \\
\wedge \, \Box_T\big( \forall \, i{<}| \sigma |\, \vboxDiamond{\tau_i}_T \sigma(i) \, \to \, \varphi \big) \Big). 
\end{multline}
In this recursive equivalence we understand that $\sigma$ is a finite sequence of formulas with $|\sigma |$ denoting the length of the sequence and $\sigma(i)$ denoting the $i$th element of the sequence. Likewise, $\tau$ is understood as being a sequence of ordinals all bounded by $\alpha$. We will write either $\tau(i)$ or $\tau_i$ for the $i$th element of $\tau$. Moreover, $\vboxDiamond \alpha$ is as always to be read a shorthand for $\neg \vboxBox  \alpha \neg$. 

One of the main complications in proving the arithmetical soundness of one-M\"unchhausen provability in the previous section was in the proof of the closure of provability under conjunctions that is, $\nboxBox{\alpha} \varphi \wedge  \nboxBox \alpha \psi \leftrightarrow \nboxBox \alpha (\varphi \wedge \psi)$. The proof of this required a weak closure of consistency under conjunctions --$\forall \varphi, \psi \, \exists \chi \Big( \nboxDiamond \alpha \varphi \wedge  \nboxDiamond \alpha \psi \leftrightarrow \nboxDiamond \alpha \chi\Big)$-- so that the conjunction of two oracle sentences could be conceived as a single oracle sentence. However, in the new recursive equivalence as we just defined in \eqref{equation:FullVectorBox}, the closure of oracles under conjunctions is built into the definition.

A further complication in proving the arithmetical soundness of one-M\"unch-hausen provability in the previous sections was caused by the fact that weak closure under conjunctions of consistency needed to be verified under a box. This was obtained by requiring a fair amount of transfinite induction and by requiring that the object and meta-theory be equal. In this last section we shall see that these requirements can also be circumvented. 

The defining equation \eqref{equation:FullVectorBox} begs for a notational simplification. From now on, the greek letter $\sigma$ shall be reserved to denote sequences of formulas and the greek letter $\tau$ shall be reserved to denote sequences of ordinals. As such, we settle upon the notational convention that $\tau\prec\alpha$ is short for $\forall \, i{<}| \tau |\, \tau_i {\prec} \alpha$ and $\vboxDiamond{\tau}_T \sigma$ is short for $|\sigma| = |\tau| \ \wedge \  \forall \, i{<}| \sigma |\, \vboxDiamond{\tau_i}_T \sigma(i)$. Since we shall require that provably $|\sigma| = |\tau| \to \Box_T |\sigma| = |\tau|$, the defining recursion can be recasted as 
\begin{equation}\label{equation:vectorBox}
\vboxBox{\alpha}_T \varphi \ \leftrightarrow \ \Box_T \varphi \vee \exists \sigma \, \exists \, \tau{\prec}\alpha\ \Big(\vboxDiamond{\tau}_T \sigma \, \wedge \, \Box_T\big( \vboxDiamond{\tau}_T \sigma \, \to \, \varphi \big) \Big). 
\end{equation}

Although we still cannot prove that different predicates that provably satisfy \eqref{equation:vectorBox} are provably equivalent, at least proving soundness of $\glp_\Lambda$ for such predicates becomes an easy matter.  Let us first define some important notions as before but now for M\"unchhausen provability instead of one-M\"unchhausen provability.

\begin{definition}
Let us call a theory $T$ a $\Lambda$-M\"unchhausen theory whenever we can define a predicate ${\vboxBox {\alpha}_T}^\Lambda$ so that $T$ proves \eqref{equation:vectorBox} together with 
\[
\begin{array}{l}
T\vdash ``{\prec}\mbox{ is transitive, right-discrete and has a minimal element}",\\
T\vdash (\xi \prec \zeta) \to {\vboxBox{\zeta}_T}^\Lambda (\xi \prec \zeta),\\
\mbox{$\xi < \zeta < \Lambda$ implies $T\vdash \xi \prec \zeta$.}
\end{array}
\]
Moreover, it is understood that $T$ has a simple coding machinery for finite sequence of objects so that the obvious facts about length and concatenation provably hold. For example, $T \vdash |\tau| =n \to \Box_T |\tau| =n$, etc.

In this case we call ${\vboxBox {\alpha}_T}^\Lambda$ a $T(\Lambda)$-M\"unchhausen provability predicate.
\end{definition}

When the theory $T$ and ordinal $\Lambda$ are clear from the context, we shall simply speak of a M\"unchhausen theory and of a M\"unchhausen provability predicate. On occasion we might only mention the ordinal $\Lambda$ or only the theory $T$ and speak of, for example, a $\Lambda$-M\"unchhausen theory and a $T$-M\"unchhausen provability predicate respectively. As with one-M\"unchhausen provability we see that the interaction axioms become trivial to prove for any M\"unchhausen provability predicate. In what follows we will revisit and simplify the soundness proof.

\begin{lemma}\label{theorem:MunchhausenCrossAxioms}
Let $T$ be a $\Lambda$-M\"unchhausen theory with corresponding predicate ${\vboxBox{\alpha}_T}^\Lambda$. Omitting sub and superscripts, we have that
\begin{enumerate}
\item
$T\vdash \forall \alpha\, \forall \varphi \, \forall \, \beta {\prec} \alpha {\prec}\Lambda\ \big(  \vboxBox \beta \varphi \to  \vboxBox \alpha \varphi \big)$,

\item
$T\vdash \forall \alpha\, \forall \varphi \, \forall \, \beta {\prec} \alpha {\prec}\Lambda\ \big( \vboxDiamond \beta \varphi \to \vboxBox  \alpha \vboxDiamond \beta \varphi \big)$, and more in general

\item
$T\vdash \forall \alpha\, \forall \sigma \, \forall \, \tau {\prec} \alpha {\prec}\Lambda\ \big(  \vboxDiamond \tau \sigma \to \vboxBox  \alpha  \vboxDiamond \tau \sigma \big)$.
\end{enumerate}
\end{lemma}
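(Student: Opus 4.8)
The plan is to derive all three items directly from the defining recursion \eqref{equation:vectorBox} together with the two order-theoretic requirements of a $\Lambda$-M\"unchhausen theory, namely that $T$ proves $\prec$ transitive and that $\xi \prec \zeta$ is $T$-provably preserved under $\vboxBox{\zeta}_T$. The key point, exactly as in Lemma \ref{theorem:crossAxiomsSound}, is that no induction whatsoever is needed: everything is a one-step unfolding of \eqref{equation:vectorBox}. Throughout I would reason inside $T$ and suppress the sub- and superscripts as the statement already does.

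First I would prove Item 3, since Item 2 is the special case of Item 3 with length-one sequences and Item 1 then follows cheaply. So I would reason in $T$, fix $\beta \prec \alpha \prec \Lambda$ and sequences $\sigma,\tau$ with $\tau \prec \beta$, and assume $\vboxDiamond{\tau}_T \sigma$. I want $\vboxBox{\alpha}_T \vboxDiamond{\tau}_T \sigma$. Unfolding the right-hand side of \eqref{equation:vectorBox} at level $\alpha$ with target formula $\varphi := \vboxDiamond{\tau}_T \sigma$, it suffices to exhibit witnesses $\sigma',\tau'$ with $\tau' \prec \alpha$, $\vboxDiamond{\tau'}_T \sigma'$, and $\Box_T(\vboxDiamond{\tau'}_T \sigma' \to \vboxDiamond{\tau}_T \sigma)$. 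The natural choice is $\sigma' := \sigma$ and $\tau' := \tau$: we have $\vboxDiamond{\tau}_T\sigma$ by assumption; the implication $\vboxDiamond{\tau}_T\sigma \to \vboxDiamond{\tau}_T\sigma$ is a logical triviality, hence $\Box_T$-provable by necessitation; and $\tau' = \tau \prec \beta \prec \alpha$ gives $\tau' \prec \alpha$ using $T$-provable transitivity of $\prec$ together with $\beta \prec \alpha$ (here I use that $\beta<\alpha<\Lambda$ implies $T \vdash \beta \prec \alpha$, or that the bound is internally available, depending on whether we quantify ordinals externally or internally — the statement quantifies internally via $\forall\,\beta{\prec}\alpha{\prec}\Lambda$, so transitivity of $\prec$ is all that is required). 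This yields $\vboxBox{\alpha}_T\vboxDiamond{\tau}_T\sigma$. Item 2 is precisely this argument restricted to $|\sigma|=|\tau|=1$.

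For Item 1, I would reason in $T$, fix $\beta \prec \alpha \prec \Lambda$ and $\varphi$, and assume $\vboxBox{\beta}_T\varphi$. Unfolding \eqref{equation:vectorBox} at level $\beta$, either $\Box_T\varphi$ holds, in which case $\vboxBox{\alpha}_T\varphi$ is immediate from the first disjunct of \eqref{equation:vectorBox} at level $\alpha$; or there are $\sigma,\tau$ with $\tau \prec \beta$, $\vboxDiamond{\tau}_T\sigma$, and $\Box_T(\vboxDiamond{\tau}_T\sigma \to \varphi)$. In the latter case the same $\sigma,\tau$ witness $\vboxBox{\alpha}_T\varphi$ directly, since $\tau \prec \beta$ together with $\beta \prec \alpha$ and $T$-provable transitivity of $\prec$ gives $\tau \prec \alpha$, and the other two conjuncts are already in hand. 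Hence $\vboxBox{\alpha}_T\varphi$.

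I do not anticipate a genuine obstacle here: the lemma is deliberately the ``cheap'' half of the soundness package, and the whole content is that closure of the oracle vectors under concatenation is now built into \eqref{equation:vectorBox}, so monotonicity in the ordinal parameter and the negative-introspection/monotonicity interaction are immediate. The only point requiring a modicum of care is the bookkeeping about whether ordinal bounds like $\tau \prec \alpha$ are obtained from external facts (the third clause of the definition, $\xi<\zeta<\Lambda \Rightarrow T\vdash \xi\prec\zeta$) or from $T$-provable transitivity applied to internally quantified ordinals; since the statement uses the internal quantifier $\forall\,\beta{\prec}\alpha{\prec}\Lambda$, transitivity of $\prec$ inside $T$ suffices throughout, and that is explicitly among the defining requirements of a $\Lambda$-M\"unchhausen theory.
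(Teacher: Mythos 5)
Your proof is correct and follows essentially the same route as the paper: the paper likewise treats these as one-step unfoldings of the recursion, establishing Item 2 by packaging $\varphi$ and $\beta$ into length-one sequences and reusing the given oracle witnesses (with $T$-provable transitivity of $\prec$) for Item 1. The only cosmetic difference is that you prove Item 3 first and specialise, whereas the paper details Item 2 directly; your insertion of an intermediate $\beta$ with $\tau\prec\beta\prec\alpha$ in Item 3 is unnecessary (the statement already gives $\tau\prec\alpha$) but harmless.
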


\begin{proof}
The proof is straightforward and completely analogous to the proof of Lemma \ref{theorem:crossAxiomsSound}. Let us just shortly comment on the second item. So, we reason in $T$ and pick a formula $\varphi$ and ordinals $\alpha$ and $\beta$ as indicated, assuming $\vboxDiamond \beta \varphi$. We now consider the sequence $\sigma_\varphi$ of length 1 whose only element is the formula $\varphi$. Likewise, we consider the sequence $\tau_\beta$ of length 1 whose only element is the ordinal $\beta$. Clearly, $T \vdash \vboxDiamond {\tau_\beta} \sigma_\varphi \to \vboxDiamond \beta \varphi$ so that $\vboxBox \alpha \vboxDiamond \beta \varphi$ follows.
\end{proof}

Contrary to the case of  1-M\"unchhausen provability it becomes now an easy exercise to see that each (internally quantified) provability predicate satisfies the distribution axioms for the basic modal logic \logic K. Moreover, necessitation is also a routine matter. Before we prove this, we first need a technical easy lemma similar to Lemma \ref{theorem:boxBoxExfalso} whose proof is immediate.

\begin{lemma}\label{theorem:MunchhausenClosedUnderZeroMP}
Let $T$ be a $\Lambda$-M\"unchhausen theory with corresponding predicate ${\vboxBox{\alpha}_T}^\Lambda$. Again, omitting sub and superscripts, we have that
\[
U\vdash \forall \, \alpha{\prec}\Lambda\, \forall\, \varphi, \psi, \chi\ \Big( \,  \vboxBox \alpha \psi \wedge \Box \varphi \wedge \Box (\varphi \wedge \psi \to \xi) \ \to \ \vboxBox \alpha \xi \,\Big). 
\]
\end{lemma}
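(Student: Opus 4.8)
The plan is to unfold the defining recursion \eqref{equation:vectorBox} for $\vboxBox{\alpha}\psi$ and to observe that the witnessing data transfers essentially verbatim to a derivation of $\xi$. So I would reason inside the theory, fix $\alpha\prec\Lambda$ and formulas $\varphi,\psi,\xi$, and assume $\vboxBox{\alpha}\psi$, $\Box\varphi$ and $\Box(\varphi\wedge\psi\to\xi)$. Applying \eqref{equation:vectorBox} to $\vboxBox{\alpha}\psi$ leaves two cases to treat.

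In the trivial case $\Box\psi$ holds; then combining $\Box\varphi$, $\Box\psi$ and $\Box(\varphi\wedge\psi\to\xi)$, using that $\Box$ is provably closed under conjunction and modus ponens (both available since a M\"unchhausen theory codes syntax and hence proves the distribution axioms for $\Box$), yields $\Box\xi$, whence $\vboxBox{\alpha}\xi$ by the first disjunct of \eqref{equation:vectorBox}.

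In the remaining case there are a sequence of formulas $\sigma$ and a sequence of ordinals $\tau\prec\alpha$ with $\vboxDiamond{\tau}\sigma$ and $\Box(\vboxDiamond{\tau}\sigma\to\psi)$. The point is that the same $\sigma$ and $\tau$ can be retained: from $\Box(\vboxDiamond{\tau}\sigma\to\psi)$, $\Box\varphi$ and $\Box(\varphi\wedge\psi\to\xi)$ one obtains $\Box(\vboxDiamond{\tau}\sigma\to\xi)$ by the same elementary closure properties of $\Box$ used above. Together with $\vboxDiamond{\tau}\sigma$ and $\tau\prec\alpha$, the second disjunct of \eqref{equation:vectorBox} then delivers $\vboxBox{\alpha}\xi$, which completes the argument.

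I do not expect any genuine obstacle here: the lemma is essentially the statement that $\vboxBox{\alpha}$ is closed under provable consequence in the presence of the built-in $\Box\varphi$ conjunct, and it is the direct analogue of part~\ref{item:BoxConjunctions:theorem:boxBoxExfalso} of Lemma~\ref{theorem:boxBoxExfalso}. The only mild points to keep in mind are (i) that all the manipulations under $\Box$ are purely propositional modal reasoning, hence formalisable already in the weak base theories we work with, and (ii) that, in contrast to the one-oracle setting of the previous section, no modification of the witnessing sequences $\sigma,\tau$ is needed, since closure of the oracles under conjunction is now built into \eqref{equation:vectorBox}.
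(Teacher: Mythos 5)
Your proof is correct and is exactly the argument the paper has in mind: the paper omits the proof entirely, declaring it ``immediate,'' and the immediate argument is precisely your case split on the defining disjunction \eqref{equation:vectorBox}, reusing the same witnessing $\sigma,\tau$ and closing $\Box$ under propositional consequence. (The mismatch between the quantified $\chi$ and the $\xi$ in the body is a typo in the paper's statement, which you have resolved in the only sensible way.)
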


With this technical lemma at hand it becomes very easy to see that each M\"unchhausen provability predicate ${\vboxBox{\alpha}_T}^\Lambda$ defines a normal\footnote{It is in this lemma that we see that working with a single $\beta$ would not have worked directly. That is, if we had defined $\vboxBox{\alpha}_T \varphi \ \leftrightarrow \ \Box_T \varphi \vee \exists \sigma \, \exists \, \beta{\prec}\alpha \ \Big( \forall \, i{<}| \sigma |\, \vboxDiamond{\beta}_T \sigma(i) \, \wedge \, \Box_T\big( \forall \, i{<}| \sigma |\, \vboxDiamond{\beta}_T \sigma(i) \, \to \, \varphi \big) \Big)$. The distributivity axiom can then only be proved if we can work with the largest consistency statement. Thus, something like Lemma \ref{theorem:basicGLPlemmas}.\ref{item:bigConsEquivSmallCons:theorem:basicGLPlemmas} should be available. For that, the soundness of $\glp_\beta$ would be needed and we are back at the transfinite induction template again.} modal logic. 

\begin{lemma}\label{theorem:distributionMunchhausen}
Let $T$ be a $\Lambda$-M\"unchhausen theory with corresponding predicate ${\vboxBox{\alpha}_T}^\Lambda$. Again, omitting sub and superscripts, we have that

\begin{enumerate}
\item
$T \vdash \forall \, \alpha{\prec}\Lambda \, \forall \varphi, \forall \psi \ \Big( \vboxBox \alpha (\varphi \to \psi) \ \to\ \big( \vboxBox \alpha \varphi \to \vboxBox \alpha \psi \big) \Big)$, and

\item
for any ordinal $\alpha$ below $\Lambda$, if $T\vdash \varphi$, then $T\vdash \vboxBox \alpha \varphi$.
\end{enumerate}
\end{lemma}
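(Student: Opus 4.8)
The plan is to argue part~1 entirely inside $T$. Fix $\alpha\prec\Lambda$ and formulas $\varphi,\psi$, assume $\vboxBox{\alpha}(\varphi\to\psi)$ and $\vboxBox{\alpha}\varphi$, and prove $\vboxBox{\alpha}\psi$ by unfolding both hypotheses via the defining recursion \eqref{equation:vectorBox} and distinguishing cases according to which disjunct is witnessed.

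First I would dispose of the cases where one of the hypotheses holds by its $\Box_T$-disjunct. If $\Box_T(\varphi\to\psi)$, then since $T$ proves $\Box_T\big((\varphi\to\psi)\wedge\varphi\to\psi\big)$, applying Lemma~\ref{theorem:MunchhausenClosedUnderZeroMP} to $\vboxBox{\alpha}\varphi$ and $\Box_T(\varphi\to\psi)$ yields $\vboxBox{\alpha}\psi$; symmetrically, if $\Box_T\varphi$, then applying Lemma~\ref{theorem:MunchhausenClosedUnderZeroMP} to $\vboxBox{\alpha}(\varphi\to\psi)$ and $\Box_T\varphi$ (using $\Box_T\big(\varphi\wedge(\varphi\to\psi)\to\psi\big)$) yields $\vboxBox{\alpha}\psi$.

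The genuine case is when both hypotheses are witnessed by the second disjunct of \eqref{equation:vectorBox}: there are $\sigma_0,\tau_0$ with $\tau_0\prec\alpha$, $\vboxDiamond{\tau_0}\sigma_0$ and $\Box_T(\vboxDiamond{\tau_0}\sigma_0\to(\varphi\to\psi))$, and likewise $\sigma_1,\tau_1$ with $\tau_1\prec\alpha$, $\vboxDiamond{\tau_1}\sigma_1$ and $\Box_T(\vboxDiamond{\tau_1}\sigma_1\to\varphi)$. Here the key step is to \emph{concatenate}: set $\sigma:=\sigma_0{}^\frown\sigma_1$ and $\tau:=\tau_0{}^\frown\tau_1$. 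Using the coding facts about length and concatenation that a $\Lambda$-M\"unchhausen theory is required to prove (together with their $\Box_T$-prefixed versions, which are likewise available by necessitation for $\Box_T$), $T$ proves $\tau\prec\alpha$ and $\vboxDiamond{\tau}\sigma\leftrightarrow\big(\vboxDiamond{\tau_0}\sigma_0\wedge\vboxDiamond{\tau_1}\sigma_1\big)$; hence $\vboxDiamond{\tau}\sigma$ and $\tau\prec\alpha$ hold, and moreover, under the box, $\vboxDiamond{\tau}\sigma$ entails both $\vboxDiamond{\tau_0}\sigma_0$ and $\vboxDiamond{\tau_1}\sigma_1$. Chaining this with the two given provable implications inside $\Box_T$ gives $\Box_T(\vboxDiamond{\tau}\sigma\to\varphi)$ and $\Box_T(\vboxDiamond{\tau}\sigma\to(\varphi\to\psi))$, hence $\Box_T(\vboxDiamond{\tau}\sigma\to\psi)$. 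Then $\sigma$, $\tau$ witness the second disjunct of \eqref{equation:vectorBox} for $\psi$ at level $\alpha$, so $\vboxBox{\alpha}\psi$, completing part~1.

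Part~2 is immediate: if $T\vdash\varphi$ there is a concrete $T$-proof, and $T$ proves the true $\Sigma^0_1$ sentence $\Box_T\varphi$ asserting its existence; since $\alpha\prec\Lambda$, the first disjunct of \eqref{equation:vectorBox} gives $T\vdash\vboxBox{\alpha}\varphi$. I expect the only delicate point to be the bookkeeping in the concatenation step — verifying that the stipulated coding machinery really does let one carry the inference $\vboxDiamond{\tau}\sigma\to\vboxDiamond{\tau_i}\sigma_i$ under $\Box_T$ — while everything else is a routine case split plus two invocations of Lemma~\ref{theorem:MunchhausenClosedUnderZeroMP}. Crucially, in contrast to the one-oracle setting, no transfinite induction is used anywhere.
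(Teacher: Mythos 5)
Your proof is correct and follows essentially the same route as the paper's: dispose of the $\Box_T$-disjunct cases via Lemma \ref{theorem:MunchhausenClosedUnderZeroMP}, handle the genuine case by concatenating the two witnessing pairs of sequences (which is exactly the point of allowing multiple oracles), and obtain part 2 from provable $\Sigma^0_1$-completeness of $\Box_T$. Your explicit remark that the coding facts must also be available under $\Box_T$ is a fair gloss on the step the paper labels ``easy to observe.''
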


\begin{proof}
The proof of the second item is easy and identical to the proof Lemma \ref{theorem:necessitationSoundForOneMunchhausenProvability}. It is in the first item where we see that working with sequences of formulas instead of formulas in our oracles is essential. So, let us reason in $T$ and fix $\alpha$ and $\varphi$ as stated. We assume $\vboxBox \alpha (\varphi \to \psi)$ and $ \vboxBox \alpha \varphi$ and need to prove $\vboxBox \alpha \psi$. 

The case that both $\Box (\varphi \to \psi)$ and $\Box \varphi$ hold is trivial and in case one of them holds, Lemma \ref{theorem:MunchhausenClosedUnderZeroMP} provides a proof. 

So, in the remaining and only non-trivial case, we find two pairs of sequences $\sigma_\varphi$ with $\tau_\varphi$ and $\sigma_{\varphi\to\psi}$ with $\tau_{\varphi\to\psi}$ so that 
$\tau_\varphi\prec \alpha \wedge \la \tau_\varphi \ra \sigma_\varphi  \ \wedge \ \Box (\la \tau_\varphi \ra \sigma_\varphi\to \varphi)$ 
and also
$\tau_{\varphi\to\psi}\prec \alpha \wedge \la \tau_{\varphi\to\psi} \ra \sigma_{\varphi\to\psi}  \ \wedge \ \Box \Big(\la \tau_{\varphi\to\psi} \ra \sigma_{\varphi\to\psi}\to ({\varphi\to\psi})\Big)$. We now consider the concatenation $\tau_\varphi\star\tau_{\varphi\to\psi}$ of both $\tau$-sequences and likewise $\sigma_\varphi\star\sigma_{\varphi\to\psi}$ denotes the concatenation of both $\sigma$-sequences. Clearly, we have $|\tau_\varphi\star\tau_{\varphi\to\psi}| = |\sigma_\varphi\star\sigma_{\varphi\to\psi}|$ and $\tau_\varphi\star\tau_{\varphi\to\psi} \prec \alpha$. Likewise, from our assumptions it is easy to observe that $\la \tau_\varphi\star\tau_{\varphi\to\psi}\ra\sigma_\varphi\star\sigma_{\varphi\to\psi}$ and $\Box \Big( \la \tau_\varphi\star\tau_{\varphi\to\psi}\ra\sigma_\varphi\star\sigma_{\varphi\to\psi} \to \psi \Big)$ so that indeed $\vboxBox{\alpha}\psi$.
\end{proof}

As a consequence of our previous lemmas, we know that all reasoning of the modal logic \logic{K} can be applied to any M\"unchhausen provability predicate. We now turn to the transitivity axiom to conclude that  each predicate $\vboxBox \alpha$ actually is sound for \logic{K4}.  Before proving this, we need one easy technical observation. 

\begin{lemma}\label{theorem:BarcanForMunchhausen}
Let $T$ be a $\Lambda$-M\"unchhausen theory with corresponding predicate $\vboxBox{\alpha}$. We have that
\[
T\vdash \exists x \ \vboxBox \alpha \varphi (\dot x) \ \to \   \vboxBox \alpha \exists x \, \varphi ( x).
\]
\end{lemma}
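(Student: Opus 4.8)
The plan is to reason inside $T$, fix a witness, unfold the defining recursion \eqref{equation:vectorBox}, and push the existential quantifier through the oracle. The argument closely mirrors that of Lemma~\ref{theorem:boxBoxExfalso}.\ref{item:existsGoesInTheBox:theorem:boxBoxExfalso}; the only ingredient beyond pure logic is the elementary, provable fact that $\exists x\, \Box_T \varphi(\dot x) \to \Box_T \exists x\, \varphi(x)$, and more precisely that for any sentence $\chi$ not containing $x$ free one has $T\vdash \Box_T(\chi \to \varphi(\dot x)) \to \Box_T(\chi \to \exists x\, \varphi(x))$, since $\Box_T(\varphi(\dot x) \to \exists x\, \varphi(x))$ is available and $\Box_T$ distributes over implications.

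So, reasoning in $T$, assume $\exists x\ \vboxBox{\alpha}_T\varphi(\dot x)$ and fix some $x$ with $\vboxBox{\alpha}_T\varphi(\dot x)$. By the recursion \eqref{equation:vectorBox} we distinguish two cases. If $\Box_T\varphi(\dot x)$, then $\exists x\, \Box_T\varphi(\dot x)$, hence $\Box_T\exists x\, \varphi(x)$, and therefore $\vboxBox{\alpha}_T\exists x\, \varphi(x)$ by the left-to-right direction of \eqref{equation:vectorBox} applied to its first disjunct. Otherwise there are sequences $\sigma$ and $\tau$ with $\tau\prec\alpha$, $\vboxDiamond{\tau}_T\sigma$, and $\Box_T(\vboxDiamond{\tau}_T\sigma \to \varphi(\dot x))$. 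Now $\vboxDiamond{\tau}_T\sigma$ is a sentence not containing $x$ free, so by the observation above we get $\Box_T(\vboxDiamond{\tau}_T\sigma \to \exists x\, \varphi(x))$. Together with $\tau\prec\alpha$ and $\vboxDiamond{\tau}_T\sigma$, a further application of \eqref{equation:vectorBox} with the \emph{same} $\sigma$ and $\tau$ yields $\vboxBox{\alpha}_T\exists x\, \varphi(x)$, as desired.

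The only point requiring a little care — and the closest thing to an obstacle — is the bookkeeping of the dot-notation: the sequences $\sigma,\tau$ furnished by the recursion are objects of the model of $T$ in which we argue, so the hypothesis is really $\Box_T(\vboxDiamond{\dot\tau}_T\dot\sigma \to \varphi(\dot x))$, and one must check that substituting numerals for $\sigma,\tau,x$ commutes with forming the consistency sentence $\vboxDiamond{\tau}_T\sigma$. This is routine given the coding machinery we assume of a $\Lambda$-M\"unchhausen theory (length and concatenation behave provably, $|\sigma|=n \to \Box_T |\sigma|=n$, etc.). Note that, as with Lemma~\ref{theorem:boxBoxExfalso}.\ref{item:existsGoesInTheBox:theorem:boxBoxExfalso}, no transfinite induction — indeed nothing beyond \ea — is used, so the lemma holds for arbitrary M\"unchhausen theories and provability predicates.
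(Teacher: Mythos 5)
Your proof is correct and follows essentially the same route as the paper's: unfold the recursion \eqref{equation:vectorBox}, keep the same oracle $\vboxDiamond{\tau}\sigma$, and push the existential quantifier through via $\Box_T(\vboxDiamond{\tau}\sigma \to \varphi(\dot x)) \to \Box_T(\vboxDiamond{\tau}\sigma \to \exists x\,\varphi(x))$. The only cosmetic difference is that you treat the $\Box_T\varphi(\dot x)$ disjunct as a separate case whereas the paper absorbs it by allowing $\sigma$ to be empty.
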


\begin{proof}
We reason in $T$ and assume that for some $x$ we gave $\vboxBox \alpha \varphi (\dot x)$. Thus, for some (possibly empty) $\sigma$ and some ordinal $\beta$ (less than $\alpha$ in case $\sigma$ is non-empty) we have $\vboxDiamond \alpha \sigma$ and $\Box \big( \vboxDiamond \beta \sigma \to \varphi (\dot x)\big)$ whence also $\Box \big( \vboxDiamond \beta \sigma \to \exists x \varphi ( x)\big)$ as was to be shown.
\end{proof}

We can now prove the soundness of the transitivity axiom.

\begin{lemma}\label{theorem:MHTransitivityAxiom}
Let $T$ be a $\Lambda$-M\"unchhausen theory with corresponding predicate $\vboxBox{\alpha}$. We have that
\[
T\vdash \forall \, \alpha{\prec}\Lambda\, \forall\, \varphi\ \Big( \,  \vboxBox \alpha \varphi \ \to \ \vboxBox \alpha \vboxBox \alpha \varphi \,\Big). 
\]

\end{lemma}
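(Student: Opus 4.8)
The plan is to rerun the argument of Item~\ref{item:transitivity:theorem:boxGLPSound} of Theorem~\ref{theorem:boxGLPSound}, but now with no transfinite (reflexive) induction at all, since for M\"unchhausen provability closure under conjunction is already available from the fact that every $\vboxBox\alpha$ obeys all of \logic{K} (Lemma~\ref{theorem:distributionMunchhausen}). So I would reason in $T$, fix $\alpha\prec\Lambda$ and $\varphi$, and assume $\vboxBox\alpha\varphi$. If $\Box_T\varphi$ holds the conclusion is cheap: provable $\Sigma^0_1$-completeness gives $\Box_T\Box_T\varphi$, necessitating \eqref{equation:vectorBox} gives $\Box_T(\Box_T\varphi\to\vboxBox\alpha\varphi)$, so $\Box_T\vboxBox\alpha\varphi$, and applying the $\Box_T\psi\to\vboxBox\alpha\psi$ direction of \eqref{equation:vectorBox} once more yields $\vboxBox\alpha\vboxBox\alpha\varphi$. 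So the content is in the case $\neg\Box_T\varphi$, where \eqref{equation:vectorBox} hands me finite sequences $\sigma$ and $\tau\prec\alpha$ with $\vboxDiamond\tau\sigma$ and $\Box_T(\vboxDiamond\tau\sigma\to\varphi)$.

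From these witnesses I would assemble three statements underneath a single outer $\vboxBox\alpha$. First, $\vboxBox\alpha\vboxDiamond\tau\sigma$: this is exactly negative introspection, i.e.\ the third item of Lemma~\ref{theorem:MunchhausenCrossAxioms} applied to $\vboxDiamond\tau\sigma$. Second, $\vboxBox\alpha(\tau\prec\alpha)$: since $\tau\prec\alpha$ is a true $\Sigma^0_1$ fact it is $\Box_T$-provable and hence $\vboxBox\alpha$-provable by the first disjunct of \eqref{equation:vectorBox} (alternatively, apply the ordering requirement $(\xi\prec\zeta)\to\vboxBox\zeta(\xi\prec\zeta)$ componentwise and use closure under conjunction). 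Third, $\vboxBox\alpha\Box_T(\vboxDiamond\tau\sigma\to\varphi)$: the formula $\Box_T(\vboxDiamond\tau\sigma\to\varphi)$ is itself $\Sigma^0_1$ and true, so $\Box_T\Box_T(\vboxDiamond\tau\sigma\to\varphi)$ and a fortiori $\vboxBox\alpha\Box_T(\vboxDiamond\tau\sigma\to\varphi)$. Since $\vboxBox\alpha$ is closed under conjunction, these combine into $\vboxBox\alpha\big((\tau\prec\alpha)\wedge\vboxDiamond\tau\sigma\wedge\Box_T(\vboxDiamond\tau\sigma\to\varphi)\big)$, and two applications of the ``$\exists$ goes in the box'' principle (Lemma~\ref{theorem:BarcanForMunchhausen}), using $\tau$ and $\sigma$ as the witnesses, give $\vboxBox\alpha\,\exists\tau'\,\exists\sigma'\,\big((\tau'\prec\alpha)\wedge\vboxDiamond{\tau'}\sigma'\wedge\Box_T(\vboxDiamond{\tau'}\sigma'\to\varphi)\big)$. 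The matrix of this existential is precisely the second disjunct of \eqref{equation:vectorBox} for $\vboxBox\alpha\varphi$, so $T$ proves it implies $\vboxBox\alpha\varphi$; necessitating that implication (and passing from $\Box_T$ to $\vboxBox\alpha$ via \eqref{equation:vectorBox}) and then using distributivity (the first item of Lemma~\ref{theorem:distributionMunchhausen}) yields $\vboxBox\alpha\vboxBox\alpha\varphi$.

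I do not expect a genuine obstacle here; the delicate points are purely in the boxing bookkeeping. The one thing that has to work is getting the oracle-consistency statement $\vboxDiamond\tau\sigma$ \emph{underneath} the outer $\vboxBox\alpha$, and this is exactly what the cross axiom of Lemma~\ref{theorem:MunchhausenCrossAxioms} secures for free; likewise $\Box_T(\vboxDiamond\tau\sigma\to\varphi)$, though it mentions the diamond, is $\Sigma^0_1$ and hence provably provable, so it too moves under $\vboxBox\alpha$ without re-entering the recursion. The essential contrast with the one-M\"unchhausen case (Theorem~\ref{theorem:boxGLPSound}) is that, because the oracles are now finite sequences, a conjunction of oracle statements at level $\alpha$ is again an oracle statement \emph{by the very shape of} \eqref{equation:vectorBox}, rather than via a weak-closure-under-conjunctions lemma that would have had to be verified under a box; this is what removes the need for transfinite reflexive induction and for the object and meta theory to coincide.
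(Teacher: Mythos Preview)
Your proof is correct and follows essentially the same route as the paper: split on whether $\Box_T\varphi$ holds, and in the nontrivial case push each of $\tau\prec\alpha$, $\vboxDiamond\tau\sigma$, and $\Box_T(\vboxDiamond\tau\sigma\to\varphi)$ under $\vboxBox\alpha$ via the ordering requirement, negative introspection, and provable $\Sigma^0_1$-completeness respectively, conjoin them using \logic{K}-closure, apply Lemma~\ref{theorem:BarcanForMunchhausen}, and read off $\vboxBox\alpha\varphi$ from \eqref{equation:vectorBox}. Your commentary on why no transfinite reflexive induction is needed here matches the paper's own framing of the contrast with Theorem~\ref{theorem:boxGLPSound}.
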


\begin{proof}
The proof is very similar to Item \ref{item:transitivity:theorem:boxGLPSound} of Theorem \ref{theorem:boxGLPSound} but now, there is no need for induction since we already know our predicate to be sound for \logic K reasoning. Thus, we reason in $T$, fix some ordinal $\alpha \prec \Lambda$ and formula $\varphi$ and assume $\vboxBox \alpha \varphi$. Now either $\Box \varphi$ or there is some sequence of ordinals $\tau \prec \alpha$ and sequence $\sigma$ so that $\vboxDiamond \beta \sigma$ and $\Box \Big( \vboxDiamond \beta \sigma \to \varphi \Big)$. In the first case, we get from $\Box \varphi$ that $\Box \Box \varphi$ whence by applying monotonicity twice that $\vboxBox \alpha \vboxBox \alpha \varphi$. 
Thus we focus on the second case and fix a particular sequences $\tau$ and $\sigma$ so that 
\begin{enumerate}
\item
$\tau \prec \alpha$;

\item
$\vboxDiamond \tau \sigma$;

\item
$\Box \Big( \vboxDiamond \tau \sigma \to \varphi \Big)$.
\end{enumerate}
From the first item, we get by assumptions on M\"unchhausen theories that $\vboxBox \alpha (\tau \prec \alpha)$. From the second item we get by negative introspection that $\vboxBox \alpha \vboxDiamond \tau \sigma$. From the third item we get $\Box \Box \Big( \vboxDiamond \tau \sigma \to \varphi \Big)$ whence $\vboxBox \alpha \Box \Big( \vboxDiamond \tau \sigma \to \varphi \Big)$. Collecting these three consequences and applying provable closure of provability under conjunctions we obtain
\[
\exists \sigma \exists \tau \ \vboxBox \alpha \Big( \tau{\prec}\alpha \, \wedge \, \vboxDiamond \tau \sigma  \, \wedge \, \Box \big( \vboxDiamond \tau \sigma \to \varphi \big)\Big)
\]
so that by Lemma \ref{theorem:BarcanForMunchhausen} we conclude
\[
\vboxBox \alpha \exists \sigma \, \exists \tau{\prec}\alpha \, \ \Big( \vboxDiamond \tau \sigma  \, \wedge \, \Box \big( \vboxDiamond \tau \sigma \to \varphi \big)\Big)
\]
which implies $\vboxBox \alpha \vboxBox \alpha \varphi$ as was to be shown.
\end{proof}

In the light of Lemma \ref{theorem:noLoebNeeded} we may now conclude arithmetical soundness for $\glp_\Lambda$ for M\"unchhausen provability.

\begin{theorem}
Let $T$ be a $\Lambda$-M\"unchhausen theory and let $\vboxBox{\alpha}_T$ be a corresponding M\"unchhausen provability predicate. Then, $\glp_\Lambda$ is sound for $T$ when the $[\alpha ]$ -modalities ($\alpha \prec \Lambda$) are interpreted as $\vboxBox{\alpha}_T$.
\end{theorem}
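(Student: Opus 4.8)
The plan is to derive this theorem by assembling the lemmas already established in this section and checking the axioms and rules of $\glp_\Lambda$ one at a time; no genuinely new ingredient is needed. Reasoning provably in $T$, fix an arbitrary arithmetical interpretation $*$ (commuting with the Booleans and sending each $[\xi]\psi$ to $\vboxBox{\xi}_T\psi^*$); it then suffices to show that $T$ proves every $*$-instance of each axiom of $\glp_\Lambda$ and that the two rules preserve $T$-provability. The propositional tautologies are immediate since $*$ commutes with the Booleans. Distributivity, $[\xi](\varphi\to\psi)\to([\xi]\varphi\to[\xi]\psi)$, is a direct instance of Lemma~\ref{theorem:distributionMunchhausen}(1); transitivity, $[\xi]\varphi\to[\xi][\xi]\varphi$, is Lemma~\ref{theorem:MHTransitivityAxiom}. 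Modus ponens trivially preserves $T$-provability, and the necessitation rule (from $\varphi$ conclude $[\xi]\varphi$) is precisely Lemma~\ref{theorem:distributionMunchhausen}(2). For the two interaction axioms we invoke Lemma~\ref{theorem:MunchhausenCrossAxioms}, using that $\xi<\zeta<\Lambda$ implies $T\vdash\xi\prec\zeta$ so that its internally quantified statements can be specialised to the concrete external ordinals appearing in the axioms: the monotonicity axiom $\<\xi\>\varphi\to[\zeta]\<\xi\>\varphi$ is item~(2) of that lemma with $\beta=\xi$, $\alpha=\zeta$, while the negative introspection axiom $\<\zeta\>\varphi\to\<\xi\>\varphi$ is the contrapositive of item~(1), $\vboxBox{\xi}\psi\to\vboxBox{\zeta}\psi$, applied with $\psi=\neg\varphi^*$.

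The one axiom that is not a straight quotation of a lemma is L\"ob's axiom $[\xi]([\xi]\varphi\to\varphi)\to[\xi]\varphi$, and this is where Lemma~\ref{theorem:noLoebNeeded} does the work. Put $\Box:=\Box_T$ and $\blacksquare:=\vboxBox{\xi}_T$. Since a M\"unchhausen theory can formalise elementary syntax, $T$ proves all arithmetical instances of ${\sf GL}$ for $\Box_T$ (in particular the formalised L\"ob theorem). The three additional axioms of ${\sf GL}^\blacksquare$ hold for $\vboxBox{\xi}_T$: axiom~(1), $\Box\phi\to\blacksquare\phi$, is immediate from the defining recursion \eqref{equation:vectorBox}; axiom~(2), distributivity for $\blacksquare$, is Lemma~\ref{theorem:distributionMunchhausen}(1); and axiom~(3), transitivity for $\blacksquare$, is Lemma~\ref{theorem:MHTransitivityAxiom}. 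Hence ${\sf GL}^\blacksquare\vdash\blacksquare(\blacksquare\phi\to\phi)\to\blacksquare\phi$ by Lemma~\ref{theorem:noLoebNeeded}, and since $T$ validates every axiom and rule of ${\sf GL}^\blacksquare$ under the present interpretation, it validates this theorem as well; that is exactly the $*$-instance of L\"ob's axiom. Assembling all of this, a routine induction on $\glp_\Lambda$-derivations yields $T\vdash\varphi^*$ for every $\glp_\Lambda$-theorem $\varphi$ and every interpretation $*$, which is the asserted soundness.

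I do not expect a real obstacle here: the substantive work — closure of oracles under conjunction (built into the definition \eqref{equation:vectorBox}), the distribution and transitivity axioms, and the complete elimination of transfinite induction — was already carried out in Lemmas~\ref{theorem:MunchhausenCrossAxioms}--\ref{theorem:MHTransitivityAxiom}. The two points deserving attention are checking that the hypotheses of Lemma~\ref{theorem:noLoebNeeded} genuinely hold (in particular that $\Box_T$ is ${\sf GL}$-sound over $T$, which rests on $T$ being able to carry out the basic provability-logical reasoning, something any M\"unchhausen theory can do since it has the requisite coding machinery), and taking care to pair the $\glp_\Lambda$ axioms named ``negative introspection'' and ``monotonicity'' with the correct items of Lemma~\ref{theorem:MunchhausenCrossAxioms}, whose statements have these roles interchanged relative to the axiom names.
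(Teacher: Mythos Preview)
Your proposal is correct and follows exactly the approach the paper intends: an induction on $\glp_\Lambda$-derivations, with each axiom and rule discharged by the lemmas of this section together with Lemma~\ref{theorem:noLoebNeeded} for L\"ob. The paper's own proof is a single sentence (``As always we prove by induction on a $\glp_\Lambda$ proof\dots''), so you have simply spelled out in full the verification that the paper leaves implicit; your pairings of axioms with lemma items, including the handling of L\"ob via ${\sf GL}^\blacksquare$, are all accurate.
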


\begin{proof}
As always we prove by induction on a $\glp_\Lambda$ proof that if $\glp_\Lambda \vdash \varphi$, then for any arithmetical realization $*$ we have that $T \vdash \varphi^*$.
\end{proof}

It is clear how the completeness proof and formalisation can be adapted to the new provability notion. Actually, it seems that in a sense M\"unchhausen provability is more fundamental than one-M\"unchhausen provability. We have chosen to start this paper with one-M\"unchhausen provability instead for two reasons. Firstly, the defining recursion for one-M\"unchhausen provability is slightly easier and more perspicuous. But secondly, it is important to be aware of the tension between provable properties and provable provable properties in the notion of one-M\"unchhausen provability and how this tension can be mitigated via transfinite reflexive induction.


\bibliographystyle{plain}
\bibliography{References}

\end{document}